\documentclass[journal]{IEEEtran}

\ifCLASSINFOpdf
\usepackage[pdftex]{graphicx}

\graphicspath{{../pdf/}{../jpeg/}}

\DeclareGraphicsExtensions{.pdf,.jpeg,.png}
\else
\usepackage{stfloats}
\usepackage{graphicx}
\usepackage[cmex10]{amsmath}
\setlength{\mathindent}{0pt}

\graphicspath{{../eps/}}
\allowdisplaybreaks

\fi

\usepackage{color}
\usepackage{amssymb}
\usepackage{amsmath}
\usepackage{amsthm}
\usepackage{bm}
 \usepackage{mathptmx}
\interdisplaylinepenalty=2500

\newtheorem{assum}{Assumption}

\newtheorem{remark}{Remark}
\newtheorem{lem}{Lemma}
\newtheorem{thm}{Theorem}

\begin{document}
\title{Vibration Suppression for Coupled Wave PDEs\\ in Deep-sea Construction}

\author{Ji~Wang,~\IEEEmembership{Member,~IEEE},
       and Miroslav~Krstic,~\IEEEmembership{Fellow,~IEEE},
\thanks{
J. Wang and M. Krstic are with Department of Mechanical and Aerospace Engineering, University of California, San Diego, La Jolla, CA 92093-0411, USA(e-mail: jiwang9024@gmail.com;krstic@ucsd.edu).
}}

\maketitle
\begin{abstract}
A deep-sea construction vessel is used to install underwater parts of an off-shore oil drilling platform at the designated locations on the seafloor. By using extended Hamilton's principle, a nonlinear PDE system governing the lateral-longitudinal coupled vibration dynamics of the deep-sea construction vessel consisting of a time-varying-length cable with an attached item is derived, and it is linearized at the steady state generating a linear PDE model, which is extended to a more general system including two coupled wave PDEs connected with two interacting ODEs at the uncontrolled boundaries. Through a preliminary transformation, an equivalent reformulated plant is generated as a $4\times 4$ coupled heterodirectional
hyperbolic PDE-ODE system characterized by spatially-varying coefficients on a time-varying domain.  To stabilize such a system, an observer-based output-feedback control design is proposed, where the measurements are only placed at the actuated boundary of the PDE, namely, at the platform at the sea surface. The exponential stability of the closed-loop system, boundedness and exponential convergence of the control inputs, are proved via Lyapunov analysis.  The obtained theoretical result is tested on a nonlinear model with ocean disturbances, even though the design is developed in the absence of such real-world effects.
\end{abstract}
\begin{IEEEkeywords}
wave PDE, distributed parameter system, boundary control, backstepping, vibration control.
\end{IEEEkeywords}
\section{Introduction}
\paragraph{Deep-sea construction vessels}
In deep-sea oil exploration, a deep-sea construction vessel (DCV)  is an important device used  to install equipment such as a subsea manifold, a subsea pump station, flowlines and so on, at the designated locations around the drill center on the seafloor \cite{Stensgaard2010Subsea,Standing2002Enhancing}.  A dominant component in the DCV is a long cable with time-varying length, of which the top
 is attached to a ship-mounted crane and the bottom is attached to the equipment (referred to as payloads hereafter). Excessive vibration of the cable due to its compliant and lightweight properties is a major problem affecting the payload positioning precision of the DCV. Besides, the excessive
vibration also may cause premature fatigue fracture of the cable especially at the connection point \cite{He2014Modeling}, which would raise the cost of part replacement or maintenance of the DCV. Therefore, vibration suppression for the cable in the DCV is well motivated for improving  performance of the DCV.
\paragraph{Vibration control of string/cable}
Vibration control of a string/cable has received much attention in the past several decades. An active vibration control strategy was designed for  a vibrational string in \cite{zhu2001}, where the actuator and sensor are required to be placed at the interior point of the cable. \cite{He2014Modeling,He2017Boundary} suppressed the undesired vibrations of a moving vibrational string  by applying control inputs at both boundaries.
In \cite{zhang2012modeling}, robust adaptive vibration control was proposed for a disturbed vibrating string of which one boundary is fixed and the opposite one is connected with a payload regulated by the control input.  However, the required actuator layouts of the aforementioned control systems are unsuitable for the DCV where the actuator is only available at the ship-mounted crane, i.e., the top end of the cable.
\paragraph{Boundary control of wave PDE-ODE systems}
The mathematical model of a vibrational string/cable is a wave partial differential equation
(PDE), and the attached payload is modeled as an ordinary differential equation (ODE) at the PDE uncontrolled boundary. Boundary control of a wave PDE-ODE systems where the control input and the unstable ODE are anti-collocated is a more challenging
task than the classical collocated ``boundary damper'' feedback
control. Stabilization of such a wave PDE-ODE system on a fixed domain was presented in \cite{2009compensating,Susto2010Control}. For a wave PDE-nonlinear ODE system, the boundary control problem is also studied in \cite{bekiaris2014compensation,cai2016nonlinear}. References \cite{J2017Axial,J2017Exponential} presented boundary control of a wave PDE-ODE coupled system on a time-varying domain, which physically describes the axial vibration of a mining cable elevator.  Adaptive boundary control of a wave PDE-ODE coupled system with unknown parameters was also considered in \cite{wang2019Adaptive}. The aforementioned works only concern a single wave PDE describing vibrations in one-direction. Challenges appear in the suppression of two-dimensional string vibrations, because there are in-domain couplings between two wave PDEs.
\paragraph{Boundary control of coupled heterodirectional transport PDEs}
A feasible approach to solving the boundary control problem of in-domain coupled wave PDEs is by introducing Riemann transformations to convert the plant to coupled transport PDEs, for which the boundary control problem has been a research focus for the past ten years, with many authors contributing to this topic.
Basic boundary stabilization of a $2\times2$ coupled linear
transport PDEs, i.e., two heterodirectional coupled transport PDEs, by backstepping was  proposed in \cite{Vazquez2011Backstepping,Coron2013Local}. It was further extended to boundary control of a $n+1$ system in \cite{Meglio2013Stabilization}. For a more general coupled linear
transport PDE system where the
number of PDEs in either direction is arbitrary, the boundary stabilization problem
was first addressed in \cite{Hu2016Control} by backstepping, which leads to a systematic framework
for the backstepping-based control of this type of system. Moreover, adaptive control with unknown system parameters or disturbance rejection for external periodic disturbances in coupled heterodirectional transport PDEs, have appeared in \cite{Anfinsen2017Adaptive,Anfinsen2018} and \cite{Deutscher2017Finite-time,Deutscher2017Output,Anfinsen2017Disturbance,Aamo2013Disturbance} respectively. Considering the attached massive payload at the bottom of the cable, boundary control of coupled linear
transport PDEs connected with an ODE at the uncontrolled boundary can also be found in \cite{Meglio2017Stabilization,J2017Control,Deutscher2018Output1}. The aforementioned works on coupled transport PDEs focus on the constant spatial domain rather than a time-varying domain introduced by the time-varying length of the cable.
\paragraph{Contribution}
\begin{itemize}
    \item To the best of our knowledge, our result is the first for boundary control of a two-dimensional coupled vibrating cable of time-varying length, where only one control input is applied at one boundary without requirements of energy dissipation or another controller on  another boundary.
        \item A similar physical problem was considered in \cite{Bohm2014} which presented state-feedback control design while neglecting the couplings between two directions of vibration and assuming the cable length as constant. An observer-based output-feedback  is proposed here, considering the coupled vibrations of a time-varying-length cable.
    \item \cite{Meglio2017Stabilization} first proposed a
full-state feedback law needing measurements of the distributed states for a general coupled linear heterodirectional hyperbolic
PDE-ODE system on a fixed domain. We develop a design for such systems on a time-varying domain, and using only  measurements at the actuated boundary.
\item As compared to our previous results about longitudinal vibration suppression control of cables
 in PDE-modeled mining cable elevators \cite{J2017Axial,J2017Exponential,J2018Balancing}, this paper achieves suppression of longitudinal-lateral coupled vibrations in cables, where the in-domain couplings between wave PDEs, i.e., the couplings between two direction vibrations, make the control design more challenging.
\end{itemize}
For complete clarity, the comparisons with our previous results and other related results from the theoretical and application aspects are summarized in Tab. \ref{tab:com}-\ref{tab:com1} respectively.
\begin{table}[!ht]
\centering
\caption{Comparisons with  application results on boundary vibration control of cables.}\label{tab:com}
\begin{tabular}{lccccccc}
  \hline
& Multi-dimensional  & Time-varying & Number of \\
  & coupled vibrations & cable & controlled/fixed/\\
    &in cables  &lengths  & damped boundaries\\
  \hline
  \cite{He2014Modeling,Liu2017Modeling} & $\times$&$\surd$& 2 \\
  \cite{He2017Boundary} & $\surd$&$\surd$& 2\\
  \cite{Bohm2014}&$\surd$&$\times$&1\\
  \cite{J2017Axial}-\cite{J2018Balancing}&$\times$& $\surd$&1 \\
  This paper  &$\surd$& $\surd$ &1 \\
  \hline
\end{tabular}
\end{table}
\begin{table*}
\centering
\caption{Comparisons with recent theoretical results on boundary control of linear coupled transport PDEs-ODE systems.}\label{tab:com1}
\begin{tabular}{lccccccc}
  \hline
& Types of  & Spatially-varying & Time-varying &Measurements \\
  & hyperbolic PDEs & coefficients & domain& \\
  \hline
  \cite{Meglio2017Stabilization}& $n$ coupled transport PDEs & $\surd$ & $\times$ &Full distributed states  \\
  \cite{Deutscher2018Output1} & $n$ coupled transport PDEs & $\surd$ & $\times$ &States at the uncontrolled boundary\\
  \cite{J2020delay} & $2\times 2$ coupled transport PDEs & $\times$ &$\times$ &Output states of ODE at the uncontrolled boundary\\
   \cite{J2018Balancing} & $2\times 2$ coupled transport PDEs & $\times$ & $\surd$ & States at two boundaries \\
  This paper & $4\times 4$ coupled transport PDEs & $\surd$ & $\surd$ & States at the actuated boundary \\
  \hline
\end{tabular}
\end{table*}
\paragraph{{Organization}}This paper is organized as
follows. In Section \ref{sec:problem}, a nonlinear distributed parameter model governing the longitudinal-lateral vibration dynamics of the DCV is derived by Hamilton's principle, which is then linearized around the steady state and extend to a general plant, based on which the control design would be conducted in the following sections. In Section \ref{sec:Con}, the state-feedback control design is presented and the exponential stability
result of the state-feedback closed-loop system is proved. In Section \ref{sec:observer}, we design a state observer and prove the exponential convergence to zero of observer errors. In Section \ref{sec:output}, we propose an observer-based output-feedback controller and establish the main result in the output-feedback closed-loop system. In Section \ref{sec:Sim},  the obtained theoretical result is tested on a nonlinear model with ocean disturbances, even though the design is developed in the absence of such real-world effects. In Section \ref{sec:Conclusion}, the conclusion and future work are provided.
\paragraph{Notation} Throughout this paper, the partial derivatives and total derivatives are denoted as:
${f_x}(x,t) = \frac{{\partial f}}{{\partial x}}(x,t)$, ${f_t}(x,t) = \frac{{\partial f}}{{\partial t}}(x,t)$,
$f '(x) = \frac{{df (x)}}{{dx}}$, $\dot f(t) = \frac{{df(t)}}{{dt}}$.
\section{Problem Formulation}\label{sec:problem}
\subsection{Modeling}
\begin{figure}
\centering
\includegraphics[width=7.9cm]{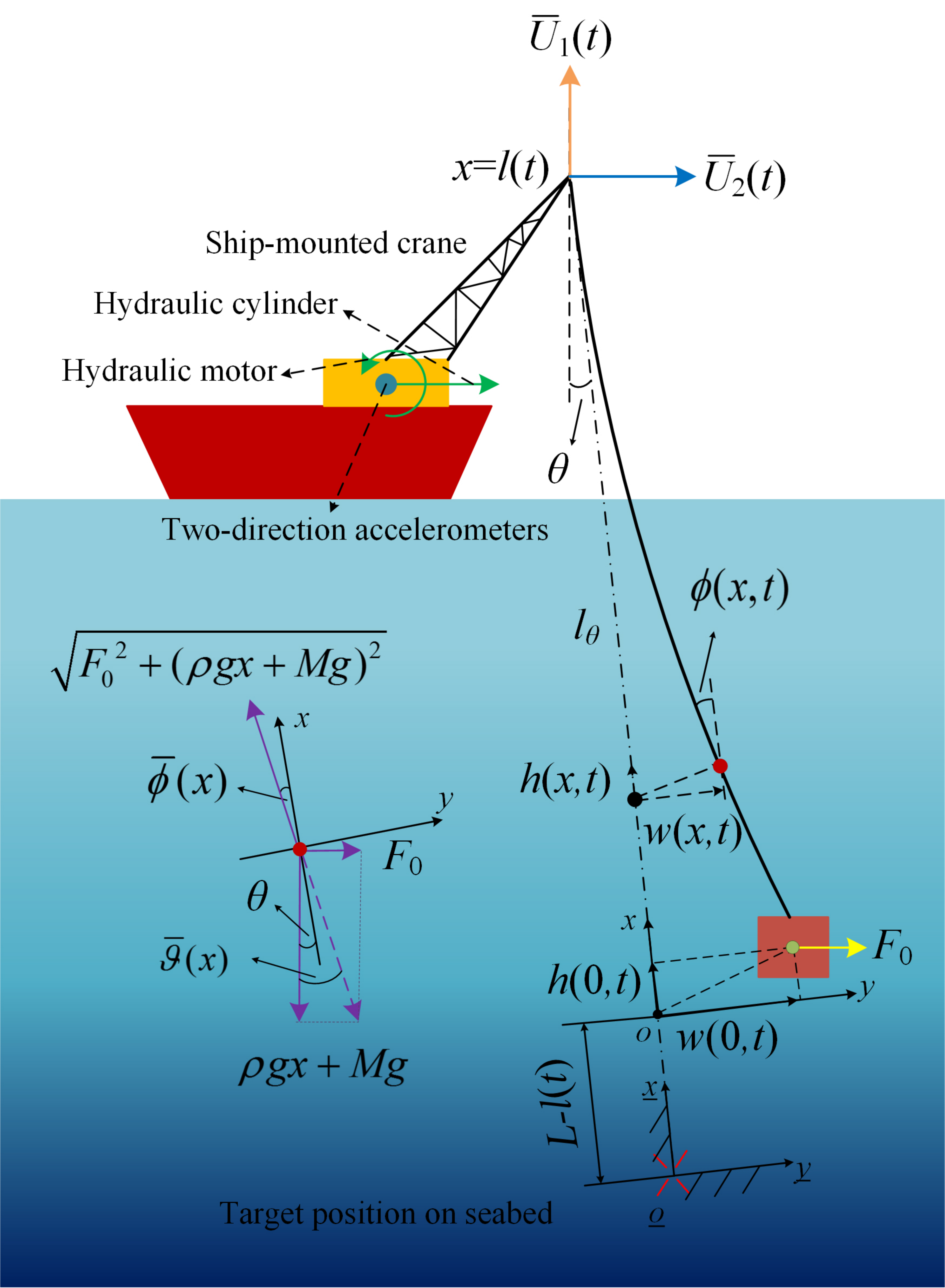}
\caption{Diagram of a deep-sea construction vessel.}
\label{fig:boat}
\end{figure}
DCVs are often used for installation
of underwater parts for an offshore drilling platform, such as a subsea manifold, a subsea pump station, a subsea distribution
unit along with associated foundations, flowlines and
umbilicals \cite{Stensgaard2010Subsea}-\cite{Standing2002Enhancing}. A DCV is depicted in Fig. \ref{fig:boat}, where a crane mounted on a ship regulates a cable to install an equipment (attached payload) at the target position on the seabed. The attached payload is subject to a constant drag force
caused by a constant water stream velocity. We only consider the water-stream caused drag force at the payload because the
diameter of the cable is much smaller than that of the payload. For suppression of cable oscillation/vibration, two-directional control forces implemented by two actuators (hydraulic  cylinder and hydraulic  motor) and measurements by accelerometers are applied/placed at the ship-mounted crane.
Note that cable motion is provided by an additional winch winded by cable on the ship, which can be considered as a  pre-defined time-varying function $l(t)$ of cable length in the cable vibration dynamics regulated by the cranes.

Hamilton's principle \cite{Kaczmarczyk2003} is applied to formulate the mathematical model of the two-dimensional vibrations of the DCV in Fig. \ref{fig:boat}, where we neglect the dynamics of the ship because it can be kept
at the desired position by the ship dynamic positioning
system. The physical parameters of the DCV are shown in Table \ref{table1} and the given values are used in the simulation. To describe the vibrations of the cable, the classical moving frame approach is used \cite{Jalon1994}. In Fig. \ref{fig:boat}, the $xoy$
frame is a moving coordinate associated with cable motion, i.e., the time-varying cable length $l(t)$, moving along line $l_{\theta}$, where $x$ ranges from $x=0$ at the cable bottom to $x=l(t)$ at the top end. The other coordinate
frame $\underline x\underline o\underline y$  is earth fixed. Buoyancy on the cable can easily be included
by adjusting the cable linear density $m_c$ as $\rho=m_c-\rho_w A_a$, and changing the payload mass $M_L$ as $M=M_L-\rho_w V_p$, when calculate the gravity of the payload.

Denote the longitudinal and (in-plane) lateral dynamic deflections in the distributed position $x$ in the cable as $h(x,t)$ and $w(x,t)$, respectively. The deformed position vector, i.e., vibration displacement vector, in the earth fixed coordinate frame $\underline x\underline o\underline y$ is
\begin{align}
D(x,t)=[L-l(t)+x+h(x,t),w(x,t)]^T.
\end{align}
Thus, the velocity vector is
\begin{align}
D_t(x,t)=[h_t(x,t)-\dot l(t),w_t(x,t)]^T.
\end{align}
According to the large displacement
approach \cite{Geradin1994}, the strain at the distributed positions $x$ in the cable is given by
\begin{align}
\varepsilon(x,t)={h_x}(x,t) + \frac{1}{2}{w_x}{{(x,t)}^2}.
\end{align}
Therefore, the kinetic energy $E_k$ and
the potential energy $E_p$ of the system are represented as
\begin{align}
E_k &= \frac{\rho }{2}\int_0^{l(t)} {{D _t}{{(x,t)}^2}} dx + \frac{M_L}{2}{D _t}{(0,t)^2},\label{eq:Ek}\\
E_p &= \frac{1}{2}\int_0^{l(t)} {EA_a\varepsilon {{(x,t)}^2}} dx + \int_0^{l(t)} {T(x,t)\varepsilon (x,t)} dx,\label{eq:Ep}
\end{align}
where $T(x)$ is  static tension given by
\begin{align}
T(x) = \rho gx + Mg.\label{eq:statictension}
\end{align}
Because the long steel cable and the equipment at the bottom of the cable are very heavy and the resulting gravity is much larger than the water-stream caused drag force $F_0=\frac{\rho_w}{2}C_dV_s^2$ \cite{Bohm2014}, the angle $\theta=\arctan\frac{F_0}{\sqrt{\rho gL+Mg}}$ in Fig. \ref{fig:boat} is thus small. Therefore, it is reasonable to assume that the static tension $T(x)$ only results from the mass of the payload and cable.
\begin{table}
\centering
\caption{Physical parameters of the DCV.}
\begin{tabular}{lccc}
\hline
Parameters (units)&values\\ \hline
Final cable length ${L}$ (m) &1210\\
Initial cable length $l(0)$ (m) &250\\
Maximum descending velocity $\bar M$ (m/s) & 10\\
Operation time $t_f$ (s) &120\\
Cable cross-sectional area ${A_a}$ (m$^{2}$) &0.47${\times}$${10^{-3}}$\\
Cable effective Young¡¯s Modulus $E$ (N/m$^{2}$) &7.03$\times$${10^{10}}$\\
Cable linear density $m_c$ (kg/m) &8. 95\\
Payload mass ${M_L}$ (kg) &8000\\
Payload volume $V_p$ ($m^3$)& 5\\
Gravitational acceleration \emph{g} (m/s$^{2}$) &9.8\\
Drag coefficient $C_d$ & 1\\
Stream velocity $V_s$ (m/s) &2\\
Seawater density $\rho_w$ ($kgm^{-3}$) &1024\\
Longitudinal damping coefficient in cable $c_u$ & 0.5\\
Lateral damping coefficient in cable $c_v$ & 0.3\\
Longitudinal damping coefficient at attached payload $c_h$ & 0.5\\
Lateral damping coefficient at attached payload $c_w$ & 0.3\\\hline
\end{tabular}
\label{table1}
\end{table}
The virtual work is
\begin{align}
\delta W =& {\bar U_1}\delta h(l(t),t) + {\bar U_2}\delta w(l(t),t) + c_h({h_t}(0,t) - \dot l(t))\delta h(0,t)\notag\\
 &+ {c_w}{w_t}(0,t)\delta w(0,t)+ \int_0^{l(t)} {{c_v}{w_t}(x,t)\delta } w(x,t)dx\notag\\
 & + \int_0^{l(t)} {c_u}({h_t}(x,t)- \dot l(t))\delta  h(x,t)dx + F_0\delta w(0,t)\notag\\
  & - Mg\delta h(0,t)- \int_0^{l(t)} {\rho g\delta h(x,t)} dx. \label{eq:vwork}
\end{align}
Note that an approximation is adopted in the virtual work \eqref{eq:vwork}, that the virtual work done by the gravity and $U_1$ are parallel to $x$-direction, and the virtual work done by $F_0$ and $U_2$ are parallel to $y$-direction, due to the small $\theta$.

Apply variations of \eqref{eq:Ek}-\eqref{eq:Ep} into extended Hamilton's principle
\begin{align}
\int_{t_1}^{t_2}(\delta E_k-\delta E_p+\delta W)dt=0,\label{eq:Ham}
\end{align}
where a difference from the standard procedure due to the time-varying integration domain should be noted as \eqref{eq:321}-\eqref{eq:123},
\begin{align}
&\rho \int_0^{l(t)} {{D_t}(x,t)\delta {D _t}(x,t)} dx\notag\\
& = \frac{{\rho \int_0^{l(t)} {{D _t}(x,t)\delta D (x,t)} dx}}{{dt}} - \dot l(t)\rho {D_t}(l(t),t)\delta D (l(t),t)\notag\\
 &- \rho \int_0^{l(t)} {{D _{tt}}(x,t)\delta D (x,t)} dx,\label{eq:321}
\end{align}
integrating \eqref{eq:321} from $t_1$ to $t_2$ then yields
\begin{align}
&\int_{{t_1}}^{{t_2}}  \rho \int_0^{l(t)} {{D_t}(x,t)\delta {D_t}(x,t)} dxdt\notag\\
 &=  - \int_{{t_1}}^{{t_2}}  \dot l(t)\rho {D_t}(l(t),t)\delta D (l(t),t)dt \notag\\
 &- \int_{{t_1}}^{{t_2}}  \rho \int_0^{l(t)} {{D_{tt}}(x,t)\delta D (x,t)} dxdt.\label{eq:123}
\end{align}
Note that $l(t)$ is treated as a
pre-defined function and thus it is not necessary
to consider its variation in $\delta {D}(x,t),\delta {D_t}(x,t)$, i.e., $\delta l(t)=\delta\dot{ l}(t)=0$.

Through a lengthy calculation for \eqref{eq:Ham}, the following governing equations are then obtained as
\begin{align}
 &- m_c ({h_{tt}}(x,t) - \ddot l(t)) + EA_a{h_{xx}}(x,t)\notag\\
 &+ {c_u}({h_t}(x,t) - \dot l(t)) + EA_a{w_x}(x,t){w_{xx}}(x,t) =0,\label{eq:n1}\\
&- m_c {w_{tt}}(x,t)+ \frac{3}{2}EA_a{w_x}{(x,t)^2}{w_{xx}}(x,t) + T(x){w_{xx}}(x,t) \notag\\
& +EA_a{h_x}(x,t){w_{xx}}(x,t)+ EA_a{h_{xx}}(x,t){w_x}(x,t) \notag\\
 &+ {c_v}{w_t}(x,t) + T'(x){w_x}(x,t)  =0,\label{eq:n2}\\
&M_L({h_{tt}}(0,t) - \ddot l(t)) +c_h({h_t}(0,t) - \dot l(t)) + EA_a{h_x}(0,t)\notag\\
 &+ \frac{1}{2}EA_a{w_x}{(0,t)^2}=0,\label{eq:n3}\\
&M_L{w_{tt}}(0,t) +{c_w}{w_t}(0,t)+ \frac{1}{2}EA_a{w_x}{(0,t)^3}\notag\\
&   + EA_a{h_x}(0,t){w_x}(0,t)+ T(0){w_x}(0,t) + F_0 = 0,\label{eq:n4}\\
 &- \left(EA_a({h_x}(l(t),t) + \frac{1}{2}{w_x}{(l(t),t)^2}) + T(l(t))\right){w_x}(l(t),t)\notag\\
& - \dot l(t)\rho {w_t}(l(t),t) + {\bar U_2}(t)=0,\label{eq:n5}\\
& - EA_a\left({h_x}(l(t),t) + \frac{1}{2}{w_x}{(l(t),t)^2}\right) - T(l(t))\notag\\
& - \dot l(t)\rho ({h_t}(l(t),t) - \dot l{(t)})  + {\bar U_1}(t)=0.\label{eq:n6}
\end{align}
Note that crane dynamics are neglected and the control input is considered to act on the top end of the cable directly. Incorporating the crane dynamics into the actuation path of the cable would generate an ODE-coupled hyperbolic PDEs-ODE system. Control problem of such a sandwiched system was addressed in \cite{J2020delay} which, however, only dealt with one-dimensional vibrations of DCV with a constant-length cable.

\eqref{eq:n1}-\eqref{eq:n6} is a strongly nonlinear system, so an approximated linear model that is suitable for control design should be built. The nonlinear PDE system \eqref{eq:n1}-\eqref{eq:n6} is linearized in the next subsection around a steady state as in the procedure in \cite{Bohm2014} .
\subsection{Linearization}\label{sec:linear}
The steady states of the distributed strain and pivot angle $\varepsilon(x,t), \phi(x,t)$ can be calculated analytically and expressed as
\begin{align}
\bar\varepsilon(x)&=\frac{1}{EA_a}\sqrt{(\rho gx+Mg)^2+F_0^2},\label{eq:steady1}\\
\bar \phi(x)&=\bar\vartheta(x)-\theta=\arctan\left(\frac{F_0}{\rho gx + Mg}\right)-\theta.\label{eq:steady2}
\end{align}
In nonlinear terms in \eqref{eq:n1}-\eqref{eq:n6}, replacing $h_x(x,t)$ which approximately describes the distributed strain in the cable by $\bar\varepsilon(x)$ , and replacing  $-w_x(x,t)$ which is approximately equal to the pivot angle $\phi(x,t)$ in Fig. \ref{fig:boat} by $\bar \phi(x)$, a linear model around the steady state can be obtained as
\begin{align}
 &- m_c {u_{tt}}(x,t) + EA_a{u_{xx}}(x,t) -EA_a{w_x}(x,t)\bar\phi'(x)\notag\\
 &+ {c_u}{u_t}(x,t) =0,\label{eq:l1}\\
&- m_c {w_{tt}}(x,t)+ \left(\frac{3}{2}EA_a\bar\phi(x)^2 + T(x)\right){w_{xx}}(x,t)\notag\\
& + (EA_a\bar\varepsilon'(x)+\rho g){w_x}(x,t) \notag\\
 &+ {c_v}{w_t}(x,t) -EA_a\bar\phi'(x) {u_x}(x,t)=0,\label{eq:l2}\\
&M_L{u_{tt}}(0,t)  + c_hu_{t}(0,t) + EA_a{u_x}(0,t)\notag\\
&- \frac{EA_a}{2}\bar\phi(0){w_x}{(0,t)}=0,\label{eq:l3}\\
&M_L{w_{tt}}(0,t) + {c_w}{w_t}(0,t)+ \frac{1}{2}EA_a\bar\phi(0)^2{w_x}(0,t)\notag\\
&   + EA_a\bar\phi(0){u_x}(0,t) = 0,\label{eq:l4}\\
 & {w_x}(l(t),t)=\frac{1}{EA_a\bar\varepsilon(l(t))+ \frac{EA_a}{2}\bar\phi(l(t))^2 + T(l(t))}{U_2}(t),\label{eq:l5}\\
& {u_x}(l(t),t) =\frac{1}{EA_a} {U_1}(t),\label{eq:l6}
\end{align}
with defining $u(x,t)=h(x,t) - l(t)$ and
\begin{align}
{U_1}(t)=& {\bar U_1}(t)+ \frac{EA_a}{2}\bar\phi(l(t))^2\notag\\
& - T(l(t))- \dot l(t)\rho {u_t}(l(t),t),\label{eq:bU1}\\
{U_2}(t)=& {\bar U_2}(t)- \dot l(t)\rho {w_t}(l(t),t).\label{eq:bU2}
\end{align}
Note that $Mg{v_x}(0,t) + F_0\approx 0$ in \eqref{eq:n4} via replacing ${v_x}(0,t)$ by the steady state $-\bar\varepsilon(0)\approx-\arctan(\frac{F_0}{Mg})\approx\frac{-F_0}{Mg}$ considering the small angles.

From a practical point
of view, available measurements are acceleration signals $u_{tt}(l(t),t),w_{tt}(l(t),t)$ obtained by accelerometers placed at the crane, because measuring vibrational acceleration rather than velocity/displacement is a more convenient way in vibrational mechanical system. The velocity signals $u_t(l(t),t),w_t(l(t),t)$ can then be obtained by integrations of the measured acceleration signals under known initial conditions.

Therefore, the vibration control design of the DCV corresponds to boundary control of the above coupled wave PDEs \eqref{eq:l1}-\eqref{eq:l6}, characterized by spatially-varying coefficients related to the steady states, the time-varying domain introduced by the time-varying length of the cable and second order boundary conditions describing dynamics of the attached payload. Note that using the known signals $u_t(l(t),t),w_t(l(t),t)$, the designed control laws $U_1(t),U_2(t)$ can be converted to the physical control forces $\bar U_1(t),\bar U_2(t)$ at the ship-mounted crane via \eqref{eq:statictension},\eqref{eq:steady2}.
\subsection{General plant}
In this paper, we represent \eqref{eq:l1}-\eqref{eq:l6} in a more general form which includes more couplings between two wave PDEs, in both the domain and the dynamic boundary, and consider the boundary control problem for this general model. The obtained theoretical result is then applied back to the specific application problem of the DCV, i.e., \eqref{eq:l1}-\eqref{eq:l6} in the simulation.

The concerned plant is
\begin{align}
{w_{tt}}(x,t) &= {d_1}(x){w_{xx}}(x,t) + {d_2}(x){w_x}(x,t) + {d_3}(x){u_x}(x,t)\notag\\
&\quad + {d_{4}}(x){w_t}(x,t) + {d_{5}}(x){u_t}(x,t),\label{eq:o1}\\
{u_{tt}}(x,t) &= {d_6}(x){u_{xx}}(x,t) + {d_7}(x){w_x}(x,t) + {d_{8}}(x){u_x}(x,t)\notag\\
&\quad + {d_{9}}(x){w_t}(x,t) + {d_{10}}(x){u_t}(x,t),\label{eq:o2}\\
{w_{tt}}(0,t) &=  d_{11}{w_t}(0,t)+{d_{12}}{w_x}(0,t)+d_{13}{u_t}(0,t) \notag\\
&\quad + {d_{14}}{u_x}(0,t),\label{eq:o3}\\
{u_{tt}}(0,t) &= d_{15}{u_t}(0,t)+{d_{16}}{u_x}(0,t) +d_{17}{w_t}(0,t)\notag\\
&\quad  + {d_{18}}{w_x}(0,t),\label{eq:o4}\\
{u_x}(l(t),t) &= d_{19}(l(t)){U_1}(t),\label{eq:o5}\\
{w_x}(l(t),t) &= d_{20}(l(t)){U_2}(t),\label{eq:o6}
\end{align}
$\forall (x,t) \in [0,l(t)]\times[0,\infty)$, and assumed measurements are ${u_t}(l(t),t),{w_t}(l(t),t)$ according to the available measurements in the DCV mentioned in Section \ref{sec:linear}. Wave PDEs $w$ and $u$ are coupled with each other both in the domain and at the dynamic boundary. System coefficients ${d_{12}},d_{11},d_{13}$, ${d_{16}},d_{17}$, $d_{15},d_{18},d_{14}$ are arbitrary constants and $d_{19}(l(t)),d_{20}(l(t))$ are nonzero. The spatially-varying coefficients ${d_1}(x)$, ${d_2}(x)$, ${d_3}(x)$, ${d_6}(x)$, ${d_7}(x)$, ${d_{8}}(x)$, ${d_{4}}(x)$, ${d_{5}}(x)$, ${d_{9}}(x)$, ${d_{10}}(x)$ are under the following two assumptions:
\begin{assum}\label{as:ab}
The spatially-varying coefficients ${d_1}(x)$, ${d_2}(x)$, ${d_3}(x)$, ${d_{4}}(x)$, ${d_{5}}(x)$, ${d_6}(x)$, ${d_7}(x)$, ${d_{8}}(x)$,  ${d_{9}}(x)$, ${d_{10}}(x)$ are bounded, $\forall x\in[0,L]$.
\end{assum}
\begin{assum}\label{as:a3}
$d_1(x),d_6(x)\in C^{1}$ are positive and $d_1(x)\neq d_6(x)$, $\forall x\in[0,L]$.
\end{assum}
The time-varying domain, i.e., moving boundary $l(t)$ is under the following two assumptions:
\begin{assum}\label{as:a0}
$l(t)$ is bounded by $0<l(t)\le L$, $\forall t\ge0$.
\end{assum}
\begin{assum}\label{as:a1}
$\dot l(t)$ is bounded by $[\underline {M},\overline {M}]$, where $\overline {M}$ satisfies
\begin{align}
\overline {M}<  \min_{0\le x\le L}\{\sqrt{d_1(x)},\sqrt{d_6(x)}\},
\end{align}
and $\underline {M}$ is arbitrary in $(-\infty,\overline {M})$.
\end{assum}
\begin{remark}
\emph{Assumptions \ref{as:ab}-\ref{as:a1} about the spatially-varying coefficients and the time-varying spatial domain of \eqref{eq:o1}-\eqref{eq:o6} are fully satisfied in the application of the DCV, which can be easily checked by the specific expressions of $d_1,\ldots,d_{20}$ \eqref{eq:q0}-\eqref{eq:q20} and parameter values in Tab. \ref{table1} of the DCV in the simulation.}
\end{remark}
\begin{remark}
\emph{The general plant \eqref{eq:o1}-\eqref{eq:o6} whose diagram  is shown in Fig.\ref{fig:wavecoupled} covers the vibration dynamics of the DCV \eqref{eq:l1}-\eqref{eq:l6} considered in this paper, namely \eqref{eq:l1}-\eqref{eq:l6} being a particular case of \eqref{eq:o1}-\eqref{eq:o6} by setting the coefficients $d_1,\ldots,d_{20}$ as the expressions \eqref{eq:q0}-\eqref{eq:q20} in simulation. Moreover, \eqref{eq:o1}-\eqref{eq:o6}  also can cover the coupled vibration dynamics of mining cable elevators or oil drilling systems.}
\end{remark}
\begin{figure}
\centering
\includegraphics[width=8cm]{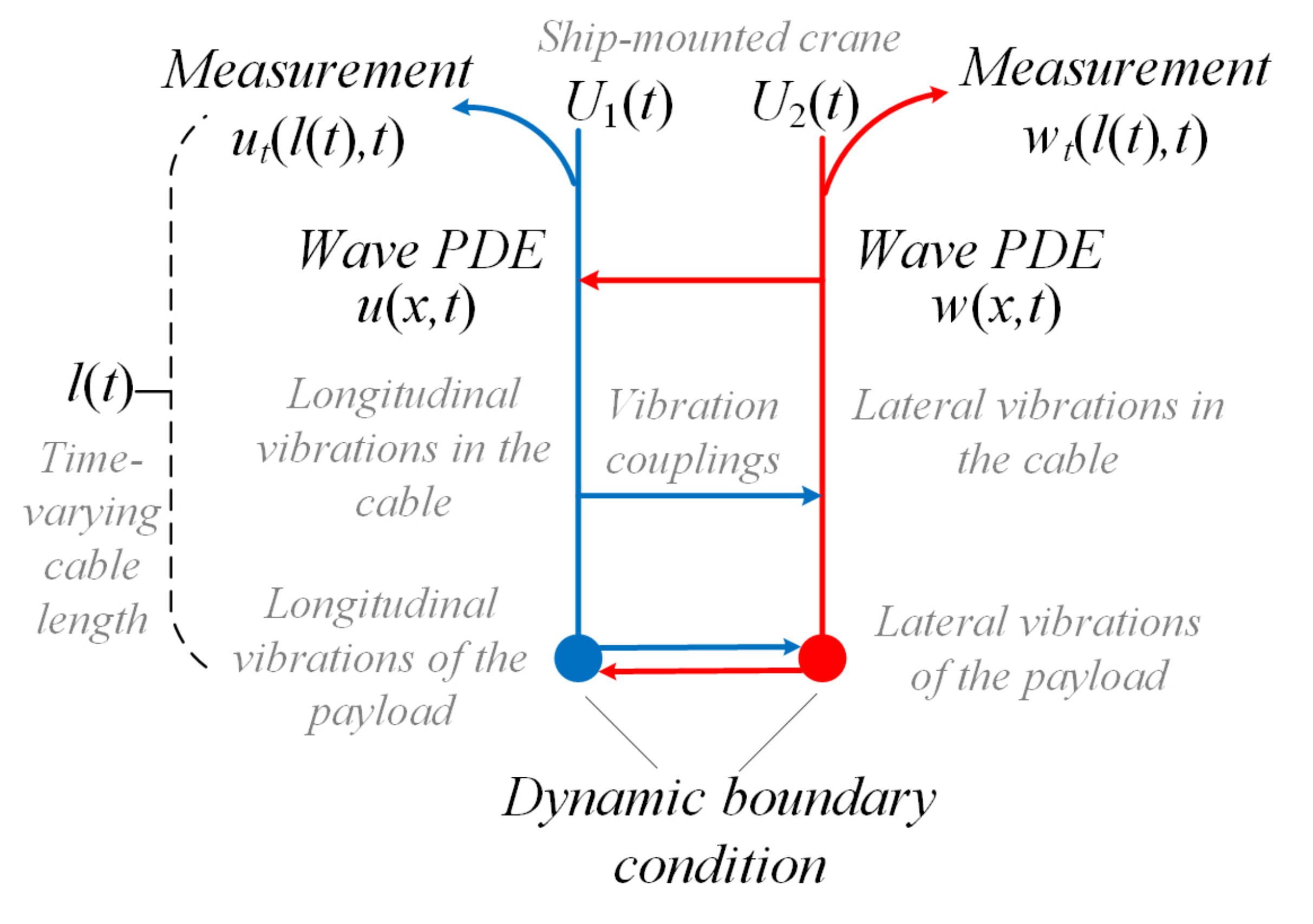}
\caption{Diagram of the plant \eqref{eq:o1}-\eqref{eq:o6}.  The italics text in gray describes the according physical meanings of \eqref{eq:o1}-\eqref{eq:o6} in the specific application of the DCV. }
\label{fig:wavecoupled}
\end{figure}
Our objective is to exponentially stabilize \eqref{eq:o1}-\eqref{eq:o6} through designing control inputs $U_1(t)$, $U_2(t)$ in \eqref{eq:o5}-\eqref{eq:o6} only using boundary values $u_t(l(t),t),w_t(l(t),t)$, i.e., a collocated type output-feedback control system. Note that well-posedness of \eqref{eq:o1}-\eqref{eq:o6} can be seen clearly based on an  equivalent  reformulated plant shown in the next subsection.
\subsection{Reformulated plant}\label{sec:pretran}
A preliminary transformation, which allows one to convert the original plant \eqref{eq:o1}-\eqref{eq:o6} to a reformulated plant based on which the control design will be conducted, will be utilized in the following.

We introduce a set of Riemann transformations:
\begin{align}
z(x,t) = {w_t}(x,t) + \sqrt {{d_1}(x)} {w_x}(x,t),\label{eq:zw}\\
v(x,t) = {w_t}(x,t) - \sqrt {{d_1}(x)} {w_x}(x,t),\label{eq:vw}\\
k(x,t) = {u_t}(x,t) + \sqrt {{d_6}(x)} {u_x}(x,t),\\
y(x,t) = {u_t}(x,t) - \sqrt {{d_6}(x)} {u_x}(x,t)\label{eq:yu}
\end{align}
and define new variables as
\begin{align}
X(t)=[w(0,t),w_t(0,t)],~~Y(t)=[u(0,t),u_t(0,t)],\label{eq:XY}
\end{align}
to reformulate \eqref{eq:o1}-\eqref{eq:o6} as
\begin{align}
&{y_t}(x,t) + \sqrt {{d_6}(x)} {y_x}(x,t) = \left(\frac{{{d_7}(x)}}{{2\sqrt {{d_1}(x)} }}+ \frac{{{d_{9}}(x)}}{2}\right)z(x,t)\notag\\
&+ \left(s_1(x)+ \frac{{{d_{10}}(x)}}{2}\right)k(x,t)+ \left(\frac{{{d_{9}}(x)}}{2}-\frac{{{d_7}(x)}}{{2\sqrt {{d_1}(x)} }}\right)v(x,t)\notag\\
& + \left(\frac{{{d_{10}}(x)}}{2}-s_1(x)\right)y(x,t),\label{eq:no1}\\
&{k_t}(x,t) - \sqrt {{d_6}(x)} {k_x}(x,t) = \left(\frac{{{d_7}(x)}}{{2\sqrt {{d_1}(x)} }}+ \frac{{{d_{9}}(x)}}{2}\right)z(x,t)\notag\\
&+ \left(s_1(x)+ \frac{{{d_{10}}(x)}}{2}\right)k(x,t)+ \left(\frac{{{d_{9}}(x)}}{2}-\frac{{{d_7}(x)}}{{2\sqrt {{d_1}(x)} }}\right)v(x,t)\notag\\
& + \left(\frac{{{d_{10}}(x)}}{2}-s_1(x)\right)y(x,t),\\
&{v_t}(x,t) + \sqrt {{d_1}(x)} {v_x}(x,t) = \left(s_2(x)+ \frac{{{d_{4}}(x)}}{2}\right)z(x,t) \notag\\
&+ \left(\frac{{{d_3}(x)}}{{2\sqrt {{d_6}(x)} }}+\frac{{{d_{5}}(x)}}{2}\right)k(x,t)+ \left(\frac{{{d_{4}}(x)}}{2}-s_2(x)\right)v(x,t)\notag\\
&  + \left(\frac{{{d_{5}}(x)}}{2}-\frac{{{d_3}(x)}}{{2\sqrt {{d_6}(x)} }}\right)y(x,t),\\
&{z_t}(x,t) - \sqrt {{d_1}(x)} {z_x}(x,t) = \left(s_2(x)+ \frac{{{d_{4}}(x)}}{2}\right)z(x,t) \notag\\
&+ \left(\frac{{{d_3}(x)}}{{2\sqrt {{d_6}(x)} }}+\frac{{{d_{5}}(x)}}{2}\right)k(x,t)+ \left(\frac{{{d_{4}}(x)}}{2}-s_2(x)\right)v(x,t)\notag\\
&  + \left(\frac{{{d_{5}}(x)}}{2}-\frac{{{d_3}(x)}}{{2\sqrt {{d_6}(x)} }}\right)y(x,t),\\
&v(0,t) = 2{C_2}X(t)-z(0,t) ,\\
&y(0,t) = 2{C_2}Y(t)-k(0,t),\\
&\dot X(t) = \left(A_1-\frac{{{B_1}{d_{12}}C_2}}{{\sqrt {{d_1}(0)} }}\right)X(t)\notag\\
 &+ \left({{d_{13}}}{B_1}{C_2}-\frac{{{B_1}{d_{14}}C_2}}{{\sqrt {{d_6}(0)} }}\right)Y(t) \notag\\
& + \frac{{{B_1}{d_{12}}}}{{\sqrt {{d_1}(0)} }}z(0,t)+ \frac{{{B_1}{d_{14}}}}{{\sqrt {{d_6}(0)} }}k(0,t),\\
&\dot Y(t) =\left( A_2-\frac{{{B_1}{d_{16}}C_2}}{{\sqrt {{d_6}(0)} }}\right)Y(t)\notag\\
&+\left(d_{17}{B_1}{C_2}-\frac{{{B_1}{d_{18}}C_2}}{{\sqrt {{d_1}(0)} }}\right)X(t)\notag\\
& + \frac{{{B_1}{d_{18}}}}{{\sqrt {{d_1}(0)} }}z(0,t)+ \frac{{{B_1}{d_{16}}}}{{\sqrt {{d_6}(0)} }}k(0,t),\\
&k(l(t),t) = 2\sqrt {{d_6}(l(t))} d_{19}(l(t)){U_1}(t) + y(l(t),t),\\
&z(l(t),t)= 2\sqrt {{d_1}(l(t))} d_{20}(l(t)){U_2}(t) + v(l(t),t) \label{eq:no10}
\end{align}
where
\begin{align}
s_1(x)=\frac{{{d_{8}}(x) - \sqrt {{d_6}(x)} {{\sqrt {{d_6}(x)} }^\prime }}}{{2\sqrt {{d_6}(x)} }},\\
s_2(x)=\frac{{{d_2}(x) - \sqrt {{d_1}(x)} {{\sqrt {{d_1}(x)} }^\prime }}}{{2\sqrt {{d_1}(x)} }},
\end{align}
and
\begin{align}
&A_1=\left[ {\begin{array}{*{20}{c}}
0&1\\
0&{{d_{11}}}
\end{array}} \right],A_2=\left[ {\begin{array}{*{20}{c}}
0&1\\
0&{{d_{15}}}
\end{array}} \right],\label{eq:AB}\\&B_1=\left(
                           \begin{array}{c}
                             0 \\
                             1 \\
                           \end{array}
                         \right),C_2=\left(
                                       \begin{array}{cc}
                                         0 & 1 \\
                                       \end{array}
                                     \right).\label{eq:C2}
\end{align}
The diagram of the system \eqref{eq:no1}-\eqref{eq:no10} is depicted in Fig. \ref{fig:4b4}.
\begin{figure}
\centering
\includegraphics[width=7.5cm]{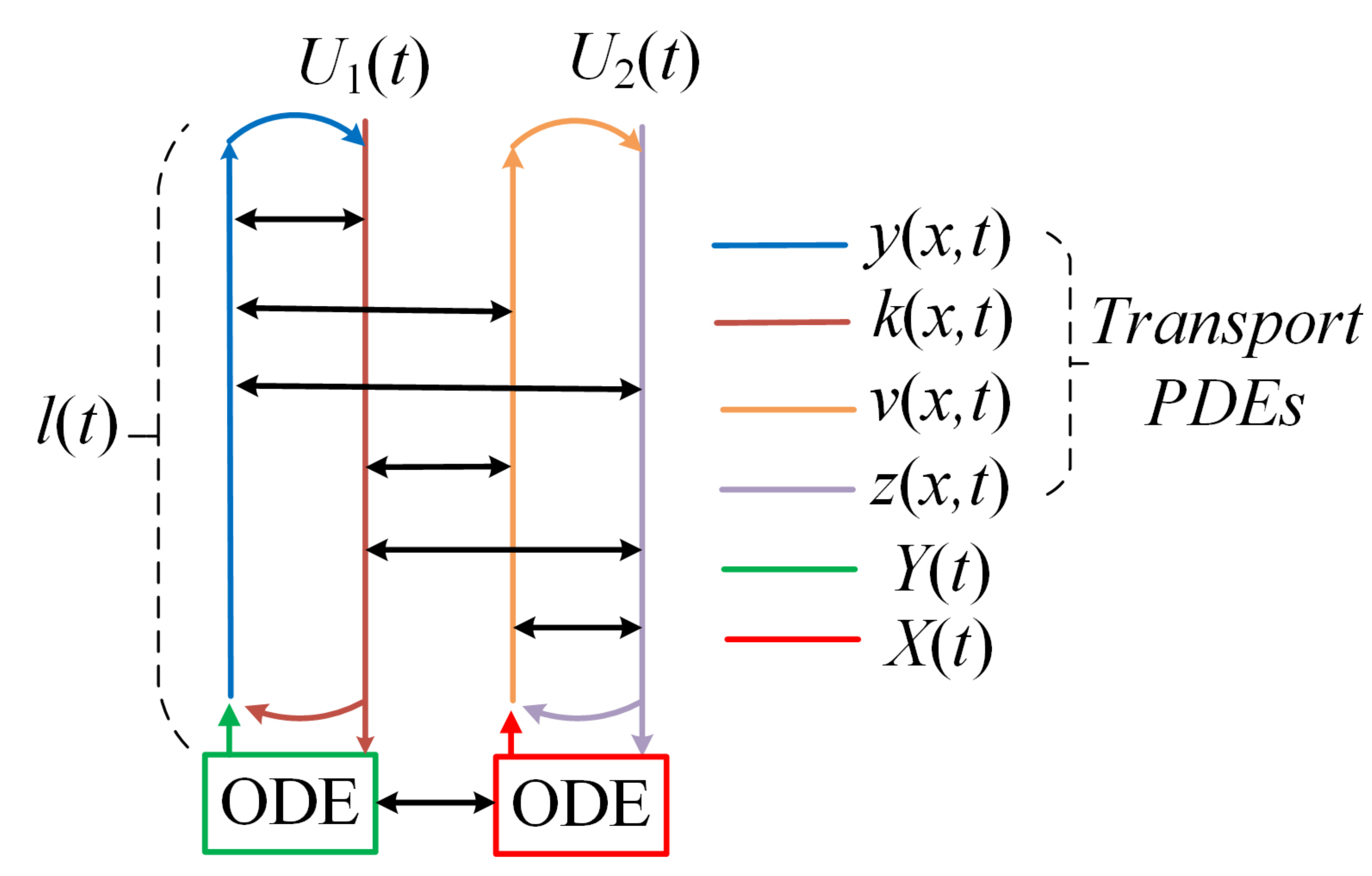}
\caption{Diagram of the system \eqref{eq:no1}-\eqref{eq:no10}.}
\label{fig:4b4}
\end{figure}
\eqref{eq:no1}-\eqref{eq:no10} is an unstable $4\times 4$ coupled linear
heterodirectional hyperbolic PDE-ODE system, where $y(x,t),v(x,t)$ and $k(x,t),z(x,t)$ propagate in opposite direction, and the four transport PDEs are coupled with each other in the time-varying spatial domain $[0,l(t)]$ and coupled with two ODEs $X(t),Y(t)$ which are coupled with each other as well at the uncontrolled boundary $x=0$. Compared with our previous result in \cite{J2018Balancing}, the two pairs of $2\times 2$ coupled linear
heterodirectional hyperbolic PDE-ODE system is extended to one $4\times 4$ system because additional in-domain couplings between original wave PDEs are considered. Moreover, the assumption that some ODE states at the uncontrolled boundary are measurable \cite{J2018Balancing} is removed in this paper.

In order to rewrite \eqref{eq:no1}-\eqref{eq:no10} in a compact form, i.e., in a matrix representation, we define new variables as
\begin{align}
p(x,t) &= {[y(x,t),v(x,t)]^T},\label{eq:dep}\\
r(x,t) &= {[k(x,t),z(x,t)]^T},\label{eq:kz}\\
W(t) &= {[X(t),Y(t)]^T}.\label{eq:deW}
\end{align}
\eqref{eq:no1}-\eqref{eq:no10} can then be rewritten as
\begin{align}
&{p_t}(x,t) + Q(x){p_x}(x,t) = {T_a}(x)r(x,t) + {T_b}(x)p(x,t),\label{eq:p1}\\
&{r_t}(x,t) - Q(x){r_x}(x,t) = {T_a}(x)r(x,t) + {T_b}(x)p(x,t),\label{eq:p2}\\
&p(0,t) = {C_3}W(t) - r(0,t),\\
&\dot W(t) = (\bar A - \bar B{C_3})W(t) + 2\bar Br(0,t),\label{eq:p4}\\
&r(l(t),t) = R(l(t))U(t) + p(l(t),t)\label{eq:p5}
\end{align}
where $U(t)=[U_1(t),U_2(t)]^T$, $R(l(t))=2{\rm{diag}}({{\sqrt {{d_6}(l(t))}d_{19}(l(t)) }},{{\sqrt {{d_1}(l(t))}d_{20}(l(t)) }})$, $Q(x)={\rm{diag}}\{Q_1(x),Q_2(x)\}={\rm{diag}}\{\sqrt {{d_6}(x)} ,\sqrt {{d_1}(x)} \}$. $\bar A ,{C_3},\bar B$ and ${T_a}(x)=\{T_{aij}(x)\}_{1\le i,j\le 2}$, ${T_b}(x)=\{T_{bij}(x)\}_{1\le i,j\le 2}$ are shown as follows
\begin{align}
&\bar A = \left[ {\begin{array}{*{20}{c}}
{{A_1}}&{{{d_{13}}}{B_1}{C_2}}\\
{{d_{17}}{B_1}{C_2}}&{{A_2}}
\end{array}} \right],{C_3} = 2\left[ {\begin{array}{*{20}{c}}
0&{{C_2}}\\
{{C_2}}&0
\end{array}} \right],\label{eq:bA}\\
&\bar B = \left[ {\begin{array}{*{20}{c}}
{\frac{{{B_1}{d_{14}}}}{{2\sqrt {{d_6}(0)} }}}&{\frac{{{B_1}{d_{12}}}}{{2\sqrt {{d_1}(0)} }}}\\
{\frac{{{B_1}{d_{16}}}}{{2\sqrt {{d_6}(0)} }}}&{\frac{{{B_1}{d_{18}}}}{{2\sqrt {{d_1}(0)} }}}
\end{array}} \right] = \left[ {\begin{array}{*{20}{c}}
0&0\\
{\frac{{{d_{14}}}}{{2\sqrt {{d_6}(0)} }}}&{\frac{{{d_{12}}}}{{2\sqrt {{d_1}(0)} }}}\\
0&0\\
{\frac{{{d_{16}}}}{{2\sqrt {{d_6}(0)} }}}&{\frac{{{d_{18}}}}{{2\sqrt {{d_1}(0)} }}}
\end{array}} \right]
\end{align}
\begin{align}
&{T_a}(x) = \left[ {\begin{array}{*{20}{c}}
s_1(x) + \frac{{d_{10}}(x)}{2}&{\frac{{{d_7}(x)}}{{2\sqrt {{d_1}(x)} }} + \frac{{d_{9}}(x)}{2}}\\
{\frac{{{d_3}(x)}}{{2\sqrt {{d_6}(x)} }} + \frac{{d_{5}}(x)}{2}}&{s_2(x)+ \frac{{d_{4}}(x)}{2}}
\end{array}} \right],\label{eq:Ta}\\
&{T_b}(x) = \left[ {\begin{array}{*{20}{c}}
{\frac{{d_{10}}(x)}{2} - s_1(x) }&{\frac{{d_{9}}(x)}{2} - \frac{{{d_7}(x)}}{{2\sqrt {{d_1}(x)} }}}\\
{\frac{{d_{5}}(x)}{2} - \frac{{{d_3}(x)}}{{2\sqrt {{d_6}(x)} }}}&{\frac{{d_{4}}(x)}{2} - s_2(x)}
\end{array}} \right].\label{eq:Tb}
\end{align}
The following assumption is required for the stabilization
of the ODE subsystem \eqref{eq:p4} in the state-feedback and
observer design, and is also satisfied in the DCV model by checking the system parameters in the simulation.
\begin{assum}\label{as:a2}
$(\bar A - \bar B{C_3},\bar B)$ is controllable and $(\bar A - \bar B{C_3}, {C_3})$ is observable .
\end{assum}
\begin{remark}
\emph{The reformulated plant  \eqref{eq:p1}-\eqref{eq:p5} obtained  from \eqref{eq:o1}-\eqref{eq:o6} via the preliminary transformation in Section \ref{sec:pretran} is well-posed, because it is analogous to the well-posed plant in \cite{Meglio2017Stabilization} with setting $m=n=2$, which indicates that the original plant \eqref{eq:o1}-\eqref{eq:o6} is also well-posed because of the invertible preliminary transformation. }
\end{remark}
As compared to \cite{Meglio2017Stabilization} where state-feedback control design needing distributed states on a constant PDE domain was presented, in the following sections, we develop an observer-based output-feedback controller upon \eqref{eq:p1}-\eqref{eq:p5} on a time-varying PDE domain, and only collocated boundary states are assumed measurable. Then the control input and the stability result will be formulated back in the original plant \eqref{eq:o1}-\eqref{eq:o6}.
\section{State-feedback control design}\label{sec:Con}
\subsection{Backstepping design}
We seek an invertible transformation that converts the system \eqref{eq:p1}-\eqref{eq:p5} $(p(x,t),r(x,t),W(t))$ into a so called target system whose exponential stability is obvious.

We postulate the backstepping transformation in the form
\begin{align}
\alpha (x,t) &= p(x,t) - \int_0^xK(x,y)p(y,t)dy\notag\\
&\quad-  \int_0^x {J(x,y)r(y,t)dy - } \gamma (x)W(t),\label{eq:t1}\\
\beta (x,t) &= r(x,t) - \int_0^x F(x,y)p(y,t)dy \notag\\
&\quad-  \int_0^x {N(x,y)r(y,t)dy - } \lambda (x)W(t),\label{eq:t2}
\end{align}
where
\begin{align}
K(x,y)=\{K_{ij}(x,y)\}_{1\le i,j\le 2},J(x,y)=\{J_{ij}(x,y)\}_{1\le i,j\le 2},\notag\\
F(x,y)=\{F_{ij}(x,y)\}_{1\le i,j\le 2},N(x,y)=\{N_{ij}(x,y)\}_{1\le i,j\le 2},\notag
\end{align}
on the triangular domain $\mathcal D=\{0\le y\le x\le l(t)\}$, and $\gamma (x)=\{\gamma_{ij}(x)\}_{1\le i\le 2,1\le j\le 4}$, $ \lambda (x)=\{ \lambda_{ij}(x)\}_{1\le i\le 2,1\le j\le 4}$ are to be determined.

The target system $(\alpha(x,t),\beta(x,t),W(t))$ is designed as
\begin{align}
&\dot W(t) = \hat AW(t) + 2\bar B\beta (0,t),\label{eq:tar1}\\
&{\alpha _t}(x,t){\rm{ = }} - Q(x){\alpha _x}(x,t)\notag\\
&\quad\quad\quad\quad + {\bar T_b}(x)\alpha (x,t)+g_1(x)\beta(0,t),\label{eq:tar2}\\
&{\beta _t}(x,t) = Q(x){\beta _x}(x,t) + {\bar T_a}(x)\beta (x,t)+g(x)\beta(0,t),\label{eq:tar3}\\
&\alpha (0,t) =  - \beta (0,t),\label{eq:tar4} \\
&\beta (l(t),t) =0\label{eq:tar5}
\end{align}
where $\hat A=\bar A - \bar B{C_3} + 2\bar B\kappa$ is a Hurwitz matrix by choosing $\kappa=\{\kappa_{ij}\}_{1\le i\le 2,1\le j\le 4}$ recalling Assumption \ref{as:a2}. Note that ${\bar T_a}(x),{\bar T_b}(x)$ are diagonal matrices consisting of the diagonal elements of  \eqref{eq:Ta}-\eqref{eq:Tb}, denoted as ${\bar T_a}(x)={\rm {diag}}({\bar T_{ai}}(x))$ and ${\bar T_b}(x)={\rm {diag}}({\bar T_{bi}}(x))$ for $i=1,2$.  Thus all coupling terms in the PDE domain \eqref{eq:p1}-\eqref{eq:p2}  are removed. $g(x),g_1(x)$ are in the form of
\begin{align}
g(x)=\left(
                             \begin{array}{cc}
                               0 & 0 \\
                               g_a(x) & 0 \\
                             \end{array}
                           \right),g_1(x)=\left(
                             \begin{array}{cc}
                               0 & 0 \\
                               g_b(x) & 0 \\
                             \end{array}
                           \right)
\end{align}
where  $g_a(x)=\sqrt {{d_6}(0)} {F_{21}}(x,0) - \frac{{{{\lambda _{22}}(x)d_{14}}}}{{\sqrt {{d_6}(0)} }} - \frac{{{\lambda _{24}}(x){d_{16}}}}{{\sqrt {{d_6}(0)} }}+ \sqrt {{d_6}(0)} {N_{21}}(x,0)$ and $g_b(x)=\sqrt {{d_6}(0)} {J_{21}}(x,0) - \frac{{{\gamma _{22}}(x){d_{14}}}}{{\sqrt {{d_6}(0)} }} - \frac{{{\gamma _{24}}(x){d_{16}}}}{{\sqrt {{d_6}(0)} }}+ \sqrt {{d_6}(0)} {K_{21}}(x,0)$.
\eqref{eq:tar1}-\eqref{eq:tar5} is exponentially stable, as we will see in the stability analysis via Lyapunov function in Theorem \ref{main}.

By matching the systems \eqref{eq:p1}-\eqref{eq:p5} and \eqref{eq:tar1}-\eqref{eq:tar5} through \eqref{eq:t1}-\eqref{eq:t2}, a lengthy but straightforward
calculation leads to the conditions on the kernels in \eqref{eq:t1}-\eqref{eq:t2} as follows. $F(x,y)$, $N(x,y)$  $\lambda(x)$ should satisfy the matrix equations:
\begin{align}
&Q(x)F(x,x) + F(x,x)Q(x)=-{T_b}(x), \label{eq:Kerc1}\\
&Q(x)N(x,x) - N(x,x)Q(x)=\bar T_a(x)-T_a(x),\label{eq:Kerc2}\\
& N(x,0)Q(0)=-F(x,0)Q(0) + 2\lambda (x)\bar B +g(x),\label{eq:Kerc3}\\
& Q(x){N_x}(x,y)+ {N_y}(x,y)Q(y)+ N(x,y)Q'(y)   \notag\\
& - N(x,y){T_a}(y)+ {\bar T_a}(x)N(x,y)- F(x,y){T_a}(y)   = 0,\label{eq:Kerc4}\\
&  Q(x){F_x}(x,y) - {F_y}(x,y)Q(y)- F(x,y)Q'(y)   \notag\\
& - F(x,y){T_b}(y)+ {\bar T_a}(x)F(x,y)- N(x,y){T_b}(y) = 0,\label{eq:Kerc5}\\
&Q(x)\lambda '(x) - \lambda (x)(\bar A - \bar B{C_3}) + {\bar T_a}(x)\lambda (x)\notag\\
& - F(x,0)Q(0){C_3}+g(x)\lambda(0) = 0,\label{eq:Kerc5a}\\
&\lambda(0)=\kappa\label{eq:Kerc6}
\end{align}
and $K(x,y)$, $J(x,y)$, $\gamma(x)$ should satisfy
\begin{align}
& - Q(x)J(x,x) - J(x,x)Q(x) =- {T_a}(x),\label{eq:Kerc1a}\\
&K(x,x)Q(x) - Q(x)K(x,x)=\bar T_b(x)-T_b(x),\\
&K(x,0)Q(0)=-J(x,0)Q(0) + 2\gamma (x)\bar B +g_1(x),\\
&  - Q(x){J_x}(x,y) + {J_y}(x,y)Q(y)+ J(x,y)Q'(y) \notag\\
& - J(x,y){T_a}(y) + {\bar T_b}(x)J(x,y)- K(x,y){T_a}(y)=0,\\
& - Q(x){K_x}(x,y) - {K_y}(x,y)Q(y)- K(x,y)Q'(y)  \notag\\
&- K(x,y){T_b}(y) + {\bar T_b}(x)K(x,y)- J(x,y){T_b}(y) =0,\\
&Q(x)\gamma '(x) +\gamma (x)(\bar A - \bar B{C_3}) +  {\bar T_b}(x)\gamma (x)\notag\\
& - K(x,0)Q(0){C_3}+g_1(x)\lambda(0)=0,\\
&\gamma (0)= {C_3}-\lambda (0). \label{eq:Kerc6a}
\end{align}
In order to ensure the existence of a unique solution of the above kernel equations, additional artificial boundary conditions should be imposed, which is shown in the proof of the following lemma about well-posedness of the  kernel equations.
\begin{lem}\label{lm:wp}
After adding two additional artificial boundary conditions for the subelements $N_{21}, K_{21}$ of kernels $N$ and $K$ respectively, the matrix equations \eqref{eq:Kerc1}-\eqref{eq:Kerc6} have a unique solution ${F},{N}\in L^{\infty}(\mathcal D)$, $\lambda\in L^{\infty}([0,l(t)])$ and \eqref{eq:Kerc1a}-\eqref{eq:Kerc6a} have a unique solution ${K},{J}\in L^{\infty}(\mathcal D)$,$\gamma \in L^{\infty}([0,l(t)])$.
\end{lem}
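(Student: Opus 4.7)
The plan is to first determine, for each kernel component, where its boundary data come from (diagonal $y=x$ or lateral boundary $y=0$), reduce the first-order PDE systems \eqref{eq:Kerc4}-\eqref{eq:Kerc5} and their counterparts to Volterra integral equations along characteristics, and close the loop via the ODEs \eqref{eq:Kerc5a}-\eqref{eq:Kerc6}. I will detail the $(F,N,\lambda)$ system; the $(K,J,\gamma)$ system is structurally symmetric and follows by an identical argument.

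Looking at the characteristics of \eqref{eq:Kerc5}, each $F_{ij}$ has negative characteristic slope $-Q_j(y)/Q_i(x)$ on $\mathcal{D}$, so its characteristic connects the diagonal to $y=0$; since $Q_i(x)+Q_j(x)>0$ on $[0,L]$ by Assumption \ref{as:a3}, \eqref{eq:Kerc1} uniquely determines $F_{ij}(x,x)$ and provides a full boundary datum on the diagonal. For $N_{ij}$ in \eqref{eq:Kerc4} the characteristic slope $+Q_j(y)/Q_i(x)$ is positive, so for $i\ne j$ the diagonal is transverse and \eqref{eq:Kerc2} supplies $N_{ij}(x,x)$ (the coefficient $Q_i-Q_j$ is invertible by Assumption \ref{as:a3}); for $i=j$ the diagonal is itself a characteristic, \eqref{eq:Kerc2} collapses to $0=0$, and a boundary value at $y=0$ must be supplied instead. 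Equation \eqref{eq:Kerc3} is designed to provide exactly these missing data at $y=0$ algebraically, save for one scalar slot: inspecting the $(2,1)$ entry using the explicit form of $\bar B$ together with the definition of $g_a$, every occurrence of $F_{21}(x,0)$, $\lambda_{22}(x)$ and $\lambda_{24}(x)$ on the right cancels against the corresponding term in $g_a$, and the coefficient of $N_{21}(x,0)$ on the two sides matches, so \eqref{eq:Kerc3}$_{21}$ is automatically satisfied and $N_{21}(x,0)$ is left free. This is precisely where the artificial boundary condition of the lemma must be imposed. The analogous cancellation occurs for $K_{21}(x,0)$ in the second system.

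With all boundary data specified, I integrate each component PDE along its characteristic to obtain an integral representation of $F$, $N$ in $\mathcal{D}$. Collecting these into a vector $\Phi=(F,N)$ yields a fixed-point problem of the form
\begin{equation*}
\Phi = \Phi_0 + \mathcal{T}[\Phi,\lambda],
\end{equation*}
in which $\Phi_0$ carries the boundary data from \eqref{eq:Kerc1}-\eqref{eq:Kerc3} together with the prescribed value of $N_{21}(x,0)$, and $\mathcal{T}$ is a linear operator of Volterra type whose integral kernel is built from $T_a,T_b,\bar T_a, Q, Q'$, all bounded on $[0,L]$ by Assumptions \ref{as:ab}-\ref{as:a3}. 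The ODE \eqref{eq:Kerc5a} with initial condition \eqref{eq:Kerc6} is a first-order linear ODE in $\lambda$ with bounded coefficients whose forcing depends linearly on $F(\cdot,0)$; it admits a unique $L^{\infty}([0,l(t)])$ solution for each such $F(\cdot,0)$, and the resulting map $F(\cdot,0)\mapsto\lambda$ is bounded. Substituting $\lambda$ back closes the fixed-point equation for $\Phi$ alone on $\mathcal{D}$.

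Finally, I will exploit the Volterra structure of $\mathcal{T}$: because each application integrates along a characteristic arc whose length is bounded by $L$, a standard nested-integral estimate gives $\|\mathcal{T}^n\|_{L^\infty(\mathcal{D})\to L^\infty(\mathcal{D})}\le C^n/n!$ with $C$ depending only on the $L^\infty$ bounds of the coefficients and on $L$. The Neumann series $\sum_{n\ge 0}\mathcal{T}^n$ therefore converges in operator norm, $(I-\mathcal{T})^{-1}$ is bounded, and $\Phi=(I-\mathcal{T})^{-1}\Phi_0\in L^\infty(\mathcal{D})$ is the unique solution; uniqueness follows by applying the same estimate to the difference of two candidate solutions. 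Running the parallel construction on \eqref{eq:Kerc1a}-\eqref{eq:Kerc6a} yields the analogous unique $K,J\in L^\infty(\mathcal{D})$ and $\gamma\in L^\infty([0,l(t)])$. The main obstacle in this argument is the bookkeeping in the first step: locating the two genuine degeneracies in \eqref{eq:Kerc3} and its counterpart and verifying that no other boundary constraint becomes either redundant or over-determined. Once that identification is made, the remainder is the standard successive-approximations construction in the spirit of \cite{Meglio2017Stabilization,Hu2016Control}.
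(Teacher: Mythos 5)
Your overall strategy -- classify each kernel component by where its characteristic picks up data, reduce to Volterra integral equations along characteristics, and close with a Neumann series bounded by $C^n/n!$ -- is the standard construction, but it is not the route the paper takes. The paper first extends the time-varying triangle $\mathcal D=\{0\le y\le x\le l(t)\}$ to the fixed triangle $\mathcal D_0=\{0\le y\le x\le L\}$ (possible because no natural condition sits on $x=l(t)$), expands \eqref{eq:Kerc1}--\eqref{eq:Kerc6} into the eight scalar transport equations \eqref{eq:F11x}--\eqref{eq:N22x} with boundary data \eqref{eq:F11}--\eqref{eq:N21L} and the ODEs \eqref{eq:lambda11}--\eqref{eq:lambda0}, and then observes that this system has exactly the structure of equations (17)--(24) of \cite{Meglio2017Stabilization} with $m=n=2$, citing that paper's well-posedness theorem rather than re-running successive approximations. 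What your self-contained argument buys is independence from the citation; what it costs is that you must actually carry out the characteristic bookkeeping that the citation packages for you, and this is where your proposal has a concrete problem. Your identification of the degeneracy is right: the $(2,1)$ entry of \eqref{eq:Kerc3} cancels identically against $g_a$ (likewise $K_{21}$ against $g_b$), so one scalar datum is missing. But you then impose the artificial condition at $y=0$, prescribing $N_{21}(x,0)$, whereas the paper (following \cite{Hu2016Control,Meglio2017Stabilization}) imposes it at the opposite outflow boundary, $N_{21}(L,y)=0$ in \eqref{eq:N21L}. This is not a cosmetic difference: $N_{21}(x,x)$ is \emph{also} prescribed, non-vacuously, by \eqref{eq:Kerc2} since $d_1\neq d_6$, and the characteristics of \eqref{eq:N21x} have positive slope $\sqrt{d_6(y)}/\sqrt{d_1(x)}$. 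Depending on whether this slope exceeds $1$ -- i.e., on the sign of $d_1-d_6$, which Assumption \ref{as:a3} leaves free -- some of these characteristics join $y=0$ to the diagonal, and prescribing $N_{21}$ on both ends over-determines the problem. Your proposal never verifies that this does not happen, and the sentence ``verifying that no other boundary constraint becomes either redundant or over-determined'' defers exactly the step on which the lemma turns.

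Two further points you should address to make the argument complete. First, the same transversality question arises for $N_{12}$, which receives data both on the diagonal (from \eqref{eq:Kerc2}) and at $y=0$ (from the non-vacuous $(1,2)$ entry of \eqref{eq:Kerc3}); the consistent accounting in \cite{Hu2016Control} orders the transport speeds and assigns the diagonal datum, the $y=0$ datum, or the artificial $x=L$ datum to each component according to that ordering -- this is the bookkeeping your first paragraph gestures at but does not execute. Second, the lemma asserts $F,N\in L^\infty(\mathcal D)$ with $\mathcal D$ a \emph{time-varying} domain; your Volterra estimate over ``arcs of length bounded by $L$'' implicitly works on the full fixed triangle, and you should say explicitly, as the paper does, that the kernels are solved once on $\mathcal D_0$ and restricted to $\mathcal D\subseteq\mathcal D_0$, so that they are time-independent objects and the controller gains $F(l(t),\cdot)$, $N(l(t),\cdot)$, $\lambda(l(t))$ are obtained by evaluation rather than by re-solving at each $t$.
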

\begin{proof}
The proof is shown in the Appendix.
\end{proof}
With similar derivations, one can show that the inverse transformations are defined as
\begin{align}
 p(x,t) &= \alpha(x,t) - \int_0^x\bar K(x,y)\alpha(y,t)dy\notag\\
& -  \int_0^x {\bar J(x,y)\beta(y,t)dy - } \bar\gamma (x)W(t),\label{eq:It1}\\
 r(x,t) &= \beta(x,t) - \int_0^x \bar F(x,y)\alpha(y,t)dy \notag\\
&-  \int_0^x {\bar N(x,y)\beta(y,t)dy - } \bar\lambda (x)W(t),\label{eq:It2}
\end{align}
where kernels $\bar K(x,y),\bar J(x,y),\bar\gamma (x),\bar F(x,y),\bar N(x,y),\bar\lambda (x)$ can be proved well-posed as Lemma \ref{lm:wp}.

Once the equation for the kernels are solved, the control input is obtained as
\begin{align}
U(t)&=-R(l(t))^{-1}\bigg[p(l(t),t)- \int_0^{l(t)} F(l(t),y)p(y,t)dy \notag\\
&-  \int_0^{l(t)} {N({l(t)},y)r(y,t)dy - } \lambda ({l(t)})W(t)\bigg]\label{eq:U1}
\end{align}
by matching boundary conditions \eqref{eq:p5} and \eqref{eq:tar5} via \eqref{eq:t2}.

Applying \eqref{eq:zw}-\eqref{eq:yu}, \eqref{eq:XY}, \eqref{eq:dep}-\eqref{eq:deW},  then \eqref{eq:U1} can be expanded and rewritten as the original states as
\begin{align}
U_1(t)=&-\frac{1}{d_{19}(l(t))\sqrt{d_6(l(t)}}\bigg[{u_t}(l(t),t) \notag\\
 &- \int_0^{l(t)}\big[F_{11}(l(t),y)({u_t}(y,t) - \sqrt {{d_6}(y)} {u_x}(y,t))\notag\\
 &+F_{12}(l(t),y)({w_t}(y,t) - \sqrt {{d_1}(y)} {w_x}(y,t))\big]dy\notag\\
&-  \int_0^{l(t)}\big[ N_{11}(l(t),y)({u_t}(y,t) + \sqrt {{d_6}(y)} {u_x}(y,t))\notag\\
&+ N_{12}(l(t),y)({w_t}(y,t) + \sqrt {{d_1}(y)} {w_x}(y,t))\big]dy \notag\\
& -  \lambda_{11} (l(t))w(0,t)-\lambda_{12} (l(t))w_t(0,t)\notag\\
&-\lambda_{13} (l(t))u(0,t)-\lambda_{14} (l(t))u_t(0,t)\bigg],\label{eq:control1}\\
U_2(t)=&-\frac{1}{d_{20}(l(t))\sqrt{d_1(l(t)}}\bigg[{w_t}(l(t),t)\notag\\
 &- \int_0^{l(t)}\big[F_{21}(l(t),y)({u_t}(y,t) - \sqrt {{d_6}(y)} {u_x}(y,t))\notag\\
 &+F_{22}(l(t),y)({w_t}(y,t) - \sqrt {{d_1}(y)} {w_x}(y,t))\big]dy\notag\\
&-  \int_0^{l(t)}\big[ N_{21}(l(t),y)({u_t}(y,t) + \sqrt {{d_6}(y)} {u_x}(y,t))\notag\\
&+ N_{22}(l(t),y)({w_t}(y,t) + \sqrt {{d_1}(y)} {w_x}(y,t))\big]dy \notag\\
& -  \lambda_{21} (l(t))w(0,t)-\lambda_{22} (l(t))w_t(0,t)\notag\\
&-\lambda_{23} (l(t))u(0,t)-\lambda_{24} (l(t))u_t(0,t)\bigg].\label{eq:control2}
\end{align}
where inserting \eqref{eq:o5}-\eqref{eq:o6} to replace $u_x(l(t),t)$, $w_x(l(t),t)$ is used.
With the above control laws $U_1(t),U_2(t)$, we obtain stability results of the state-feedback
closed-loop system, summarized in the following two theorems.
\subsection{Stability analysis in the state-feedback closed-loop system}
\subsubsection{Stability result of closed-loop system}
The exponential stability result of the state-feedback closed-loop system is shown in the following theorem, which physically means the vibration energy of the cable, including kinetic energy and potential
energy in two directions, bounded by $\xi(\| {{w_t}(\cdot,t)} \|^2 + \| {{ w_x}(\cdot,t)} \|^2+\| {{u_t}(\cdot,t)} \|^2 + \| {{ u_x}(\cdot,t)} \|^2)$ with $\xi>0$, is exponentially convergent to zero, where the decay rate of the vibration energy is adjustable by the control parameters.
\begin{thm}\label{main}
If initial values $(w(x, 0),w_t(x, 0)) \in H^{2}(0,L)\times H^{1}(0,L)$, $(u(x, 0),u_t(x, 0)) \in H^{2}(0,L)\times H^{1}(0,L)$, the closed-loop system consisting of the plant \eqref{eq:o1}-\eqref{eq:o6} and the state-feedback control law \eqref{eq:control1}-\eqref{eq:control2} is exponentially stable in the sense that there exist positive constants $\Upsilon_{1}, \sigma_1$ such that
\begin{align}
\bigg(&\| {{w_t}(\cdot,t)} \|^2 + \| {{ w_x}(\cdot,t)} \|^2+\| {{u_t}(\cdot,t)} \|^2 + \| {{ u_x}(\cdot,t)} \|^2\notag\\
&+|w(0,t)|^2+|w_t(0,t)|^2+|u(0,t)|^2+|u_t(0,t)|^2\bigg)^{1/2}\notag\\
&\le \Upsilon_{1}\bigg(\| {{w_t}(\cdot,0)} \|^2 + \| {{ w_x}(\cdot,0)} \|^2+\| {{u_t}(\cdot,0)} \|^2 + \| {{ u_x}(\cdot,0)} \|^2\notag\\
&+|w(0,0)|^2+|w_t(0,0)|^2+|u(0,0)|^2+|u_t(0,0)|^2\bigg)^{1/2} e^{-\sigma_1 t},\label{eq:Theo2}
\end{align}
where $\|u(\cdot,t)\|^2$ is a compact notation for $\int_0^{l(t)} {{ u}(x,t)^2} dx$ and $|\cdot|$ denotes the Euclidean norm. The convergence rate $\sigma_1$ is adjustable by the control parameters.
\end{thm}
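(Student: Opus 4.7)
The plan is to do the Lyapunov analysis in the target coordinates $(\alpha,\beta,W)$ defined by \eqref{eq:t1}--\eqref{eq:t2} and \eqref{eq:tar1}--\eqref{eq:tar5}, and then transfer the exponential decay back to the physical variables in \eqref{eq:Theo2}. Lemma~\ref{lm:wp} gives $L^\infty$ bounds on the kernels of both the direct transformation \eqref{eq:t1}--\eqref{eq:t2} and the inverse transformation \eqref{eq:It1}--\eqref{eq:It2}; together with the invertible Riemann change of variables \eqref{eq:zw}--\eqref{eq:yu} and the bounded coefficients $Q(x)$ (Assumptions~\ref{as:ab}--\ref{as:a3}), this yields an equivalence between the norm on the right of \eqref{eq:Theo2} and the corresponding $L^2\times L^2\times\mathbb{R}^{8}$ norm of $(\alpha,\beta,W)$. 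Hence it suffices to construct a Lyapunov functional for the target system that decays exponentially.

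The Lyapunov candidate I would use is
\begin{align}
V(t) &= W(t)^{T}PW(t) + a\!\int_{0}^{l(t)}\! e^{-\delta x}\,\alpha(x,t)^{T}\alpha(x,t)\,dx \notag\\
&\quad + b\!\int_{0}^{l(t)}\! e^{\delta x}\,\beta(x,t)^{T}\beta(x,t)\,dx,
\end{align}
where $P=P^{T}\succ 0$ solves $\hat A^{T}P+P\hat A=-I$ (available because $\hat A$ is Hurwitz by the choice of $\kappa$ and Assumption~\ref{as:a2}), and $a,b,\delta>0$ are tuning constants. The weight $e^{-\delta x}$ on $\alpha$ matches the direction in which $\alpha$ is transported by $-Q(x)\partial_{x}$, and $e^{\delta x}$ on $\beta$ matches the opposite direction; these orientations ensure that the interior integration by parts produces strictly negative coercive contributions.

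Differentiating $V$ along \eqref{eq:tar1}--\eqref{eq:tar5}, applying the Leibniz rule for the moving endpoint $l(t)$, and integrating by parts yields four groups of terms. The ODE contribution is $-|W|^{2}+4W^{T}P\bar B\,\beta(0,t)$. The interior contributions are dominated by $-a\delta\!\int_{0}^{l(t)}\!e^{-\delta x}\alpha^{T}Q(x)\alpha\,dx - b\delta\!\int_{0}^{l(t)}\!e^{\delta x}\beta^{T}Q(x)\beta\,dx$, plus bounded reactions coming from $\bar T_{a},\bar T_{b}$ and from the residual source terms $g(x)\beta(0,t)$, $g_{1}(x)\beta(0,t)$ left in the target dynamics. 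The boundary contribution at $x=l(t)$ is $a\bigl(\dot l(t)-Q(l(t))\bigr)e^{-\delta l(t)}|\alpha(l(t),t)|^{2}$, which is nonpositive thanks to Assumption~\ref{as:a1}, while the $\beta$-part vanishes because of the boundary condition \eqref{eq:tar5}. The boundary contribution at $x=0$ collapses, after substituting \eqref{eq:tar4}, to $(bQ(0)-aQ(0))|\beta(0,t)|^{2}$. Choosing $b>a$, then enlarging $\delta$, and finally applying Young's inequality to every cross term, I expect to obtain $\dot V(t)\le -2\sigma V(t)$ for some $\sigma>0$ that is adjustable through $\kappa$, $P$, $a$, $b$, $\delta$. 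Gronwall gives $V(t)\le V(0)e^{-2\sigma t}$, and mapping the bound back via the two invertible transformations produces \eqref{eq:Theo2} with $\sigma_{1}=\sigma$ and $\Upsilon_{1}$ depending only on the kernel bounds, $P$, $a$, $b$, $\delta$, and the coefficient bounds in Assumption~\ref{as:ab}.

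The main obstacle I expect is the simultaneous absorption of boundary and coupling terms involving $\beta(0,t)$: a single negative quadratic $-|W|^{2}$ from the Hurwitz margin and a single $(b-a)Q(0)|\beta(0,t)|^{2}$ from the $x=0$ boundary must dominate the ODE cross term $4W^{T}P\bar B\,\beta(0,t)$, the in-domain couplings $g(x)\beta(0,t)$ and $g_{1}(x)\beta(0,t)$ (integrated against $\alpha$ and $\beta$ through the weighted norms), and the residual cross term produced by $\alpha(0,t)=-\beta(0,t)$ in \eqref{eq:tar4}. The book-keeping is routine if the tuning is done in the order ``fix $P$, then pick $b>a$, then enlarge $\delta$, then tune the Young constants''; crucially, it closes because $g$, $g_{1}$ and $P\bar B$ are all bounded operators. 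The second, essentially geometric, obstacle is extracting the $|w(0,t)|^{2}+|u(0,t)|^{2}$ components on the left of \eqref{eq:Theo2}; these are contained in $W(t)$ through the definitions \eqref{eq:XY} and \eqref{eq:deW}, so they come for free from the $W^{T}PW$ term.
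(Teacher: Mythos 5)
Your proposal follows essentially the same route as the paper's proof: a weighted Lyapunov functional $W^{T}PW+a\int e^{-\delta x}|\alpha|^{2}+b\int e^{\delta x}|\beta|^{2}$ on the target system (the paper's $V_{1}$ in \eqref{eq:V1} differs only by the harmless extra weight $Q(x)^{-1}$ and diagonal matrices $R_{a},R_{b}$ in place of your scalars), the same treatment of the moving endpoint via Assumption~\ref{as:a1}, the same parameter ordering with the $\beta$-weight exceeding the $\alpha$-weight (the paper's $r_{a i}>r_{b i}+\cdots$ is your $b>a$), and the same norm-equivalence step through the invertible Riemann and backstepping transformations to recover \eqref{eq:Theo2}. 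The only blemish is a sign slip in your $x=0$ boundary term, which should read $(aQ(0)-bQ(0))|\beta(0,t)|^{2}$ so that $b>a$ indeed makes it negative, exactly as your subsequent tuning assumes.
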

\begin{proof}
We start from studying the stability of the target system \eqref{eq:tar1}-\eqref{eq:tar5}. The equivalent stability property between the target system \eqref{eq:tar1}-\eqref{eq:tar5} and the original system \eqref{eq:o1}-\eqref{eq:o6} is ensured via the definitions \eqref{eq:zw}-\eqref{eq:yu}, \eqref{eq:dep}-\eqref{eq:deW} and the backsteppting transformations \eqref{eq:t1}-\eqref{eq:t2} and \eqref{eq:It1}-\eqref{eq:It2}.

Consider the following Lyapunov funtion for the target system \eqref{eq:tar1}-\eqref{eq:tar5},
\begin{align}
V_1 =& {W^T}(t){P_1}W(t) + \frac{1}{2}\int_0^{l(t)} {{e^{{\delta _2}x}}\beta {{(x,t)}^T}{R_a}Q{{(x)}^{ - 1}}\beta (x,t)} dx \notag\\
&+ \frac{1}{2}\int_0^{l(t)} {{e^{ - {\delta _1}x}}\alpha {{(x,t)}^T}{R_b}Q{{(x)}^{ - 1}}\alpha (x,t)} dx,\label{eq:V1}
\end{align}
where there exists a positive definite matrix $P_1 = {P_1^T} $ being the solution to the Lyapunov equation
$P_1\hat A + \hat A^T P_1 =  - \hat Q_1$,
for some $\hat Q_1 = {{\hat Q_1}^T} > 0$. $R_a,R_b$ are diagonal matrices as $R_a={\rm diag}\{r_{a1},r_{a2}\}$, $R_b={\rm diag}\{r_{b1},r_{b2}\}$.
The positive parameters $r_{a1},r_{a2},r_{b1}.r_{b2},\delta_1,\delta_2$  are to be chosen later.

According to \eqref{eq:V1}, we have
\begin{align}
\mu_1\Omega(t)\le V_1(t)\le \mu_2\Omega(t)\label{eq:norm}
\end{align}
with defining
\begin{align}
\Omega(t)=|W(t)|^2+\|{\beta}{{(x,t)}}\|^2+\|{\alpha}{{(x,t)}}\|^2
\end{align}
for some positive $\mu_1,\mu_2$.
Time derivative of $V_1(t)$ along \eqref{eq:tar1}-\eqref{eq:tar5} is obtained as
\begin{align}
\dot V_1 & ={{\dot W}^T}(t){P_1}W(t) + {W^T}(t){P_1}\dot W(t)\notag\\
 &+ \int_0^{l(t)} {{e^{{\delta _2}x}}\beta {{(x,t)}^T}{R_a}Q{{(x)}^{ - 1}}{\beta _t}(x,t)} dx\notag\\
 &+ \int_0^{l(t)} {{e^{ - {\delta _1}x}}\alpha {{(x,t)}^T}{R_b}Q{{(x)}^{ - 1}}{\alpha _t}(x,t)} dx\notag\\
 &+ \frac{{\dot l(t)}}{2}{e^{ - {\delta _1}l(t)}}\alpha {(l(t),t)^T}{R_b}Q{(l(t))^{ - 1}}\alpha (l(t),t)\notag\\
 &= W{(t)^T}({{\hat A}^T}{P_1} + {P_1}\hat A)W(t) + 4{W^T}(t){P_1}\bar B\beta (0,t)\notag\\
 &+ \frac{{\dot l(t)}}{2}{e^{ - {\delta _1}l(t)}}\alpha {(l(t),t)^T}{R_b}Q{(l(t))^{ - 1}}\alpha (l(t),t)\notag\\
 &+ \int_0^{l(t)} {{e^{{\delta _2}x}}\beta {{(x,t)}^T}{R_a}Q{{(x)}^{ - 1}}{{\bar T}_a}(x)\beta (x,t)} dx\notag\\
 &+ \int_0^{l(t)} {{e^{{\delta _2}x}}\beta {{(x,t)}^T}{R_a}Q{{(x)}^{ - 1}}g(x)\beta (0,t)} dx\notag\\
 &- \frac{1}{2}\beta {(0,t)^T}{R_a}\beta (0,t) - \frac{{{\delta _2}}}{2}\int_0^{l(t)} {{e^{{\delta _2}x}}\beta {{(x,t)}^T}{R_a}\beta (x,t)} dx\notag\\
 &- \frac{1}{2}{e^{ - {\delta _1}l(t)}}\alpha {(l(t),t)^T}{R_b}\alpha (l(t),t) + \frac{1}{2}\alpha {(0,t)^T}{R_b}\alpha (0,t)\notag\\
 &- \frac{{{\delta _1}}}{2}\int_0^{l(t)} {{e^{ - {\delta _1}x}}\alpha {{(x,t)}^T}{R_b}\alpha (x,t)} dx\notag\\
& + \int_0^{l(t)} {{e^{ - {\delta _1}x}}\alpha {{(x,t)}^T}{R_b}Q{{(x)}^{ - 1}}{{\bar T}_b}(x)\alpha (x,t)} dx\notag\\
 &+ \int_0^{l(t)} {{e^{ - {\delta _1}x}}\alpha {{(x,t)}^T}{R_b}Q{{(x)}^{ - 1}}{g_1}(x)\beta (0,t)} dx.\label{eq:dV11}
\end{align}
Applying Young's inequality and considering the boundedness of the elements $\frac{1}{\sqrt {d_1(x)}},\frac{1}{\sqrt {d_6(x)}},g_a(x),g_b(x)$ in the matrices $Q(x)^{-1},g(x),g_1(x)$, there exists $\xi>0$ such that the following inequalities holds
\begin{align}
&\quad\int_0^{l(t)} {{e^{{\delta _2}x}}\beta {{(x,t)}^T}{R_a}Q{{(x)}^{ - 1}}g(x)\beta (0,t)} dx\notag\\
 &\le \xi\int_0^{l(t)} {{e^{{\delta _2}x}}\beta {{(x,t)}^T}{R_a}\beta (x,t)} dx\notag\\
  &\quad+ \xi \int_0^{l(t)} {{e^{{\delta _2}L}}\beta {{(0,t)}^T}{\Lambda _a}\beta (0,t)} dx\label{eq:in1}\\
&\quad\int_0^{l(t)} {{e^{ - {\delta _1}x}}\alpha {{(x,t)}^T}{R_b}Q{{(x)}^{ - 1}}{g_1}(x)\beta (0,t)} dx\notag\\
& \le \xi \int_0^{l(t)} {{e^{ - {\delta _1}x}}\alpha {{(x,t)}^T}{R_b}\alpha (x,t)} dx \notag\\
&\quad+ \xi \int_0^{l(t)} {\beta {{(0,t)}^T}{\Lambda _b}\beta (0,t)} dx,\label{eq:in2}
\end{align}
where
\begin{align}
{\Lambda _a} = \left[ {\begin{array}{*{20}{c}}
{{r_{a2}}}&0\\
0&0
\end{array}} \right],{\Lambda _b} = \left[ {\begin{array}{*{20}{c}}
{{r_{b2}}}&0\\
0&0
\end{array}} \right].
\end{align}
Inserting \eqref{eq:in1}-\eqref{eq:in2} and applying Young's inequality into \eqref{eq:dV11}, one obtains
\begin{align}
&\dot V_1 (t) \le - \frac{1}{2}{\lambda _{\min }}({Q_2}){\left| {W(t)} \right|^2}- \beta {(0,t)^T}\bigg(\frac{{{R_a}}}{2} - \frac{{{R_b}}}{2} \notag\\
& - \frac{{8{{\left| {{P_1}{{\bar B}_1}} \right|}^2}}}{{{\lambda _{\min }}({Q_2})}}{I_2} - {e^{{\delta _2}L}}\xi {\Lambda _a} - \xi {\Lambda _b}\bigg)\beta (0,t)- \int_0^{l(t)} {e^{{\delta _2}x}}\beta {{(x,t)}^T}\notag\\
 &\times{R_a}\left((\frac{{{\delta _2}}}{2} - \xi ){I_2} - Q{{(x)}^{ - 1}}{{\bar T}_a}(x)\right)\beta (x,t) dx- \int_0^{l(t)} {e^{ - {\delta _1}x}}\notag\\
 &\times\alpha {{(x,t)}^T}{R_b}\left((\frac{{{\delta _1}}}{2} - \xi ){I_2} - Q{{(x)}^{ - 1}}{{\bar T}_b}(x)\right)\alpha (x,t) dx\notag\\
 &- \frac{1}{2}{e^{ - {\delta _1}l(t)}}\alpha {(l(t),t)^T}{R_b}\left({I_2} - Q{(l(t))^{ - 1}}\dot l(t)\right)\alpha (l(t),t),
\label{eq:dV12}
\end{align}
where $I_2$ is a $2\times2$ identity matrix.

The parameters $r_{a1},r_{a2},r_{b1}.r_{b2},\delta_1,\delta_2$ are chosen to satisfy
\begin{align}
&{r_{a1}}> {r_{b1}} + \frac{{16{{\left| {{P_1}{{\bar B}_1}} \right|}^2}}}{{{\lambda _{\min }}({Q_2})}} + 2{r_{a2}}{e^{{\delta _2}L}}\xi  + 2{r_{b2}}\xi,  \\
&{r_{a2}} > {r_{b2}}+ \frac{{16{{\left| {{P_1}{{\bar B}_1}} \right|}^2}}}{{{\lambda _{\min }}({Q_2})}}
\end{align}
with sufficiently large $\delta_1,\delta_2$. Note that positive constants $r_{b1},r_{b2}$ can be arbitrary.
We know the elements in the diagonal matrix $Q{(l(t))^{ - 1}}\dot l(t)$ is less than $1$ by recalling Assumption \ref{as:a1}, and the boundedness of all elements in the diagonal matrix $Q(x)^{-1}$, $\bar T_a(x),\bar T_b(x)$ by recalling Assumption \ref{as:ab}, we thus arrive at
\begin{align}
&\dot V_1(t)\le- \eta_1V_1(t),\label{eq:dV1final}
\end{align}
for some positive $\eta_1$. It follows that
\begin{align}
V_1(t)\le V_1(0)e^{-\eta_1 t}\label{eq:V0}
\end{align}
and then
\begin{align}
\Omega(t)\le \frac{\mu_{2}}{\mu_{1}}\Omega(0)e^{-\eta_1 t}
\end{align}
by recalling \eqref{eq:norm}.

Now we have obtained exponential stability in $\Omega(t)$.  Establishing the relationship between
the $\Omega(t)$ and the appropriate norm of the
$u(x,t),w(x,t)$-system, is the key to establishing exponential stability in the
original
variables.

Defining
\begin{flalign}
&\Xi(t)=\|u_x(\cdot,t)\|^2+\|u_t(\cdot,t)\|^2+|u(0,t)|^2+|u_t(0,t)|^2\notag\\
&+\|w_x(\cdot,t)\|^2+\|w_t(\cdot,t)\|^2+|w(0,t)|^2+|w_t(0,t)|^2 \label{eq:xinorm}
\end{flalign}
and recalling \eqref{eq:zw}-\eqref{eq:yu}, \eqref{eq:XY}-\eqref{eq:deW},  \eqref{eq:It1}-\eqref{eq:It2}, applying Cauchy-Schwarz inequality,
the following inequality holds
\begin{align}
{\bar\theta_{1a}}\Xi(t)\le\Omega(t)\le\bar\theta_{1b}\Xi(t)\label{eq:the21}
\end{align}
for some positive ${\bar\theta_{1a}}$ and $\bar\theta_{1b}$. Therefore, we have
\begin{align}
\Xi(t)\le\frac{\mu_{2}\bar\theta_{1b}}{\mu_{1}\bar\theta_{1a}}\Xi(0)e^{-\eta_1 t}.\label{eq:Xi}
\end{align}
Thus \eqref{eq:Theo2} is achieved with
\begin{align}
\Upsilon_1&=\sqrt{\frac{\mu_{2}\bar\theta_{1b}}{\mu_{1}\bar\theta_{1a}}},~\sigma_1=\frac{\eta_1}{2},
\end{align}
where the convergence rate $\sigma_1$ can be adjusted by the control parameter $\kappa$ through affecting ${\lambda _{\min }}({\hat Q_1})$. Then the proof of Theorem \ref{main} is completed.\end{proof}
\subsubsection{Exponential convergence of control input}
Before proving the exponential convergence of the control input, we propose a lemma first which shows the exponential stability result of the closed-loop system in the sense of $H^2$ norm.
\begin{lem}\label{lem:uxvx}
For any initial data $(w(x, 0),w_t(x, 0)) \in H^{2}(0,L)\times H^{1}(0,L)$, $(u(x, 0),u_t(x, 0)) \in H^{2}(0,L)\times H^{1}(0,L)$, the exponential stability estimate of the closed-loop system $(u(x,t),w(x,t))$ is obtained in the sense that there exist positive constants $\Upsilon_{1a}$ and $\sigma_{1a}$ such that
\begin{align}
&\quad\left(\|u_{xx}(\cdot,t)\|^2+\|w_{xx}(\cdot,t)\|^2+\|u_{tx}(\cdot,t)\|^2+\|w_{tx}(\cdot,t)\|^2\right)^{\frac{1}{2}}\notag\\
&\le\Upsilon_{1a}\bigg({\|u_{x}(\cdot,0)\|^2+\|w_{x}(\cdot,0)\|^2+\|u_{t}(\cdot,0)\|^2+\|w_{t}(\cdot,0)\|^2}\notag\\
&{+\|u_{xx}(\cdot,0)\|^2+\|w_{xx}(\cdot,0)\|^2+\|u_{tx}(\cdot,0)\|^2+\|w_{tx}(\cdot,0)\|^2}\notag\\
&{+|w(0,0)|^2+|w_t(0,0)|^2+|u(0,0)|^2+|u_t(0,0)|^2}\bigg)e^{-\sigma_{1a}t},
\end{align}
\end{lem}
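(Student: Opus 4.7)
The plan is to differentiate the target system \eqref{eq:tar1}--\eqref{eq:tar5} once in time and replay the Lyapunov analysis of Theorem~\ref{main} on the new variables $(\alpha_t(x,t),\beta_t(x,t),\dot W(t))$. The motivation is transparent: via the Riemann change of variables \eqref{eq:zw}--\eqref{eq:yu} and the invertible backstepping transformation \eqref{eq:It1}--\eqref{eq:It2}, $\alpha$ and $\beta$ are bounded linear combinations of $w_t,w_x,u_t,u_x$ and $W(t)$, so $\alpha_t,\beta_t$ encode $w_{tt},w_{tx},u_{tt},u_{tx}$, and through the original wave equations \eqref{eq:o1}--\eqref{eq:o2} they also determine $w_{xx},u_{xx}$. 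Hence, once exponential decay of $|\dot W(t)|^2+\|\alpha_t(\cdot,t)\|^2+\|\beta_t(\cdot,t)\|^2$ is in hand, together with the $H^1$ estimate already provided by Theorem~\ref{main}, routing back through the inverse maps will produce the bound claimed.

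Differentiating \eqref{eq:tar1}--\eqref{eq:tar5} in $t$, I obtain an ODE $\ddot W(t)=\hat A\dot W(t)+2\bar B\beta_t(0,t)$ together with two transport PDEs for $\alpha_t,\beta_t$ whose principal part and in-domain couplings are exactly those of \eqref{eq:tar2}--\eqref{eq:tar3}. The boundary condition at $x=0$ simply inherits $\alpha_t(0,t)=-\beta_t(0,t)$. The delicate boundary is the moving one: differentiating $\beta(l(t),t)=0$ yields $\beta_t(l(t),t)+\dot l(t)\beta_x(l(t),t)=0$, and eliminating $\beta_x(l(t),t)$ via the PDE \eqref{eq:tar3} restricted to $x=l(t)$ (again using $\beta(l(t),t)=0$) produces a closed-form condition of the schematic type $\beta_t(l(t),t)=\Phi(l(t))\beta(0,t)$, where $\Phi$ is a bounded matrix since $Q(l(t))+\dot l(t)I$ is uniformly invertible by Assumption~\ref{as:a1}. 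Crucially, its right-hand side is a quantity already known to decay exponentially by Theorem~\ref{main}, so it acts as a harmless exponentially small forcing term on the differentiated system.

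Next, I build the Lyapunov functional
\begin{align*}
V_2(t)=&\,\dot W(t)^T P_1\dot W(t)\\
&+\tfrac{1}{2}\int_0^{l(t)}e^{\delta_2 x}\beta_t(x,t)^T R_a Q(x)^{-1}\beta_t(x,t)\,dx\\
&+\tfrac{1}{2}\int_0^{l(t)}e^{-\delta_1 x}\alpha_t(x,t)^T R_b Q(x)^{-1}\alpha_t(x,t)\,dx
\end{align*}
mirroring \eqref{eq:V1}. The computation of $\dot V_2$ reproduces \eqref{eq:dV11}--\eqref{eq:dV1final} line by line, the only new contribution being the boundary trace at $x=l(t)$ involving $\Phi(l(t))\beta(0,t)$; Young's inequality dominates it by $c_1 V_2(t)+c_2|\beta(0,t)|^2$, and the second piece decays like $e^{-\eta_1 t}$ by Theorem~\ref{main}. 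A standard comparison argument (or simply augmenting $V_2$ with a large multiple of $V_1$) yields $\dot V_2(t)\le -\eta_2 V_2(t)+C\,\Omega(0)e^{-\eta_1 t}$, hence exponential decay of $V_2$. Algebraic use of \eqref{eq:tar2}--\eqref{eq:tar3} then transfers this decay to $\|\alpha_x(\cdot,t)\|^2+\|\beta_x(\cdot,t)\|^2$.

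Finally, the inverse Riemann map and the inverse backstepping transformation, together with the wave equations \eqref{eq:o1}--\eqref{eq:o2} (which read off $u_{xx},w_{xx}$ from $u_{tt},w_{tt}$ and lower-order terms), convert the combined decay of $(\alpha,\beta,\alpha_t,\beta_t,\alpha_x,\beta_x,W,\dot W)$ into the stated $H^2$-type estimate on $(u,w)$. The step I expect to be the main obstacle is the careful bookkeeping of the moving-boundary contributions in the differentiated system: verifying that $\Phi(l(t))$ is uniformly bounded under Assumptions~\ref{as:a0}--\ref{as:a1}, checking that $\dot l(t)$ does not generate an adverse sign in the boundary trace $\dot l(t)e^{-\delta_1 l(t)}\alpha_t(l(t),t)^T R_b Q(l(t))^{-1}\alpha_t(l(t),t)$ arising from Leibniz' rule on the moving interval, and confirming that the initial regularity $(w,w_t),(u,u_t)\in H^2\times H^1$ is precisely what makes $V_2(0)$ bounded by the norm on the right-hand side of the lemma.
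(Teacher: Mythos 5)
Your proposal is correct and follows the same overall strategy as the paper's proof of Lemma~\ref{lem:uxvx}: differentiate the target system once, rerun the weighted Lyapunov analysis of Theorem~\ref{main} on the differentiated variables while augmenting with a large multiple of $V_1$ (exactly what the paper does with the $R_1V_1$ term in \eqref{eq:V2}), and map back through the inverse transformations. The one genuine difference is the direction of differentiation: you differentiate in time and work with $(\alpha_t,\beta_t,\dot W)$, whereas the paper differentiates the PDEs in space and works with $(\varpi,\zeta,Z)=(\alpha_x,\beta_x,\dot W)$. The two are interchangeable through the first-order transport structure, but the choice changes two details. First, at the moving boundary the paper derives the homogeneous condition $\beta_x(l(t),t)=0$ in \eqref{eq:dtar5} from $(\dot l(t)+Q(l(t)))\beta_x(l(t),t)=0$, which, strictly speaking, omits the $g(l(t))\beta(0,t)$ contribution of \eqref{eq:tar3}; your time-differentiated system instead carries the inhomogeneous condition $\beta_t(l(t),t)=\Phi(l(t))\beta(0,t)$, which you correctly retain and absorb as an exponentially decaying forcing via the comparison/augmentation argument --- on this point your bookkeeping is, if anything, more careful than the paper's. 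Second, the return to the original variables differs: the paper reads $u_{xx},w_{xx},u_{tx},w_{tx}$ directly from $p_x,r_x$ through the spatial derivatives of the Riemann maps, while you obtain $u_{tt},u_{tx}$ (and the $w$ analogues) and must additionally invoke \eqref{eq:o1}--\eqref{eq:o2} together with the uniform positivity of $d_1,d_6$ from Assumption~\ref{as:a3} to solve for $u_{xx},w_{xx}$; both routes are valid, yours costing one extra but harmless use of the plant equations. The remaining ingredients --- the sign of the $\dot l(t)$ boundary trace under Assumption~\ref{as:a1}, the use of the $|\beta(0,t)|^2$ damping already present in $\dot V_1$ to dominate the new boundary forcing, and the bound on $V_2(0)$ by the $H^2\times H^1$ initial data --- coincide with the paper's argument.
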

\begin{proof}
Taking the spatial derivative and time derivative of \eqref{eq:tar2}-\eqref{eq:tar3} and \eqref{eq:dtar1}, \eqref{eq:dtar4}-\eqref{eq:dtar5} respectively,
\begin{align}
&\ddot W(t) = \hat A\dot W(t) + 2\bar BQ(0){\beta _x}(0,t) +2\bar B {\bar T_a}(0)\beta (0,t),\label{eq:dtar1}\\
&{\alpha _{xt}}(x,t)= - Q(x){\alpha _{xx}}(x,t) + ({\bar T_b}(x)- Q'(x))\alpha_x (x,t)\notag\\
&\quad\quad\quad\quad\quad+ {\bar T_b}'(x)\alpha (x,t),\\
&{\beta _{xt}}(x,t) = Q(x){\beta _{xx}}(x,t)+({\bar T_a}(x)+Q'(x)){\beta _{x}}(x,t) \notag\\
&\quad\quad\quad\quad\quad+ {\bar T_a}'(x)\beta(x,t),\\
&- Q(0){\alpha _x}(0,t) =  -  Q(0){\beta _x}(0,t)- {\bar T_a}(0)\beta (0,t)\notag \\
&\quad\quad\quad\quad\quad\quad\quad\quad- {\bar T_b}(0)\alpha (0,t),\label{eq:dtar4} \\
&{\beta}_x(l(t),t)=0\label{eq:dtar5}
\end{align}
where \eqref{eq:tar3}, \eqref{eq:tar5} are used. Note that \eqref{eq:dtar5} results from $(\dot l(t)+Q(l(t))){\beta _x}(l(t),t)=0$ where the elements in the diagonal matrix $\dot l(t)+Q(l(t))$ are identically nonzero by recalling Assumption \ref{as:a1}. Define new variables $\varpi(x,t)=\alpha_x(x,t)$, $\zeta(x,t)=\beta_x(x,t)$, $Z(t)=\dot W(t)$. Consider a Lyapunov function as
\begin{align}
V_2(t)& = R_1V_1(t)+{{Z}}(t)^T P_1Z(t) \notag\\
&+ \frac{1}{2}\int_0^{l(t)} {e^{\bar\delta_1 x}}{\zeta}{{(x,t)}}^T\bar R_aQ(x)^{-1}{\zeta}{{(x,t)}} dx\notag\\
&+ \frac{1}{2}\int_0^{l(t)} {e^{ - \bar\delta_2 x}}\varpi(x,t)^T\bar R_bQ(x)^{-1}\varpi(x,t) dx,\label{eq:V2}
\end{align}
where $\bar R_a,\bar R_b$ are diagonal matrices as $\bar R_a={\rm diag}\{\bar r_{a1},\bar r_{a2}\}$, $\bar R_b={\rm diag}\{\bar r_{b1},\bar r_{b2}\}$.
$\bar r_{a1},\bar r_{a2},\bar r_{b1},\bar r_{b2},\bar \delta_1,\bar \delta_1$ and $R_1$  are positive parameters.

Taking the derivative of \eqref{eq:V2} along \eqref{eq:dtar1}-\eqref{eq:dtar5}, recalling \eqref{eq:dV12}, determining $\bar r_{a1},\bar r_{a2},\bar r_{b1},\bar r_{b2},\bar \delta_1,\bar \delta_2$ through a similar process as \eqref{eq:dV11}-\eqref{eq:V0}, and choosing large enough positive constant $R_1$, we arrive at
\begin{align}
\dot V_2(t)\le -\eta_2V_2(t)
\end{align}
for some positive $\eta_2$. Recalling backstepping transformations  \eqref{eq:It1}-\eqref{eq:It2}, we have
\begin{align}
&\|p_x(\cdot,t)\|^2+\|r_x(\cdot,t)\|^2\le \Upsilon_{1b}(W(0)^2+\|p(\cdot,0)\|^2+\|r(\cdot,0)\|^2\notag\\
&+\|p_x(\cdot,0)\|^2+\|r_x(\cdot,0)\|^2)e^{-\eta_2 t}
\end{align}
for some positive $\Upsilon_{1b}$. Applying \eqref{eq:zw}-\eqref{eq:yu}, \eqref{eq:dep}-\eqref{eq:kz}, the proof of Lemma \ref{lem:uxvx} is completed.
\end{proof}
\begin{thm}\label{th:ecU}
In the closed-loop system, the state-feedback controller $U_1(t)$, $U_2(t)$ \eqref{eq:control1}-\eqref{eq:control2} are bounded and exponentially convergent to zero in the sense that there exist positive constants $\sigma_2$ and $\Upsilon_2$ such that
\begin{align}
&|U_1(t)|^2+|U_2(t)|^2\le {\Upsilon_2}\bigg(\|u_{x}(\cdot,0)\|^2+\|w_{x}(\cdot,0)\|^2\notag\\
&+\|u_{t}(\cdot,0)\|^2+\|w_{t}(\cdot,0)\|^2+\|u_{xx}(\cdot,0)\|^2+\|w_{xx}(\cdot,0)\|^2\notag\\
&+\|u_{tx}(\cdot,0)\|^2+\|w_{tx}(\cdot,0)\|^2+|w(0,0)|^2+|w_t(0,0)|^2\notag\\
&{+|u(0,0)|^2+|u_t(0,0)|^2}\bigg)e^{-{\sigma_2}t}.\label{eq:Ub}
\end{align}
\end{thm}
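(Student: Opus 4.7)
The plan is to bound $|U_1(t)|^2+|U_2(t)|^2$ term-by-term from the closed-form expressions \eqref{eq:control1}-\eqref{eq:control2}, and then invoke the exponential decay estimates already established in Theorem \ref{main} and Lemma \ref{lem:uxvx}. Concretely, each control input is a linear combination of three distinct object types: (i) a boundary trace $u_t(l(t),t)$ or $w_t(l(t),t)$; (ii) six spatial integrals of the form $\int_0^{l(t)} F_{ij}(l(t),y)(\cdot) \, dy$ and $\int_0^{l(t)} N_{ij}(l(t),y)(\cdot) \, dy$ acting on $u_t,w_t,u_x,w_x$; and (iii) four scalar ODE contributions $\lambda_{ij}(l(t))$ times $w(0,t),w_t(0,t),u(0,t),u_t(0,t)$. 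Each class will be handled by a standard inequality, after which termwise exponential decay is inherited from the earlier stability results.

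First, I would apply Cauchy-Schwarz to each integral, using the kernel bounds $\|F_{ij}(l(t),\cdot)\|_{L^\infty},\|N_{ij}(l(t),\cdot)\|_{L^\infty}$ and $|\lambda_{ij}(l(t))|$ that are guaranteed by Lemma \ref{lm:wp} together with Assumption \ref{as:a0}; this yields estimates like
\begin{align}
\Bigl|\int_0^{l(t)} N_{11}(l(t),y)\sqrt{d_6(y)}\,u_x(y,t)\,dy\Bigr|^2 \le C\, L\, \|u_x(\cdot,t)\|^2,
\end{align}
and similarly for the $w_x,u_t,w_t$ pieces. For the boundary traces, I would use the trace/Sobolev inequality on the time-varying interval: writing $u_t(l(t),t) = u_t(0,t)+\int_0^{l(t)} u_{tx}(x,t)\,dx$ gives
\begin{align}
|u_t(l(t),t)|^2 \le 2|u_t(0,t)|^2 + 2L\|u_{tx}(\cdot,t)\|^2,
\end{align}
and likewise for $w_t(l(t),t)$; this converts the pointwise boundary values into quantities already controlled by Theorem \ref{main} (the $|u_t(0,t)|^2$ part) and Lemma \ref{lem:uxvx} (the $\|u_{tx}\|^2,\|w_{tx}\|^2$ part).

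Assembling these bounds gives
\begin{align}
|U_1(t)|^2+|U_2(t)|^2 \le C\,\Xi(t) + C'\bigl(\|u_{tx}(\cdot,t)\|^2+\|w_{tx}(\cdot,t)\|^2\bigr),
\end{align}
where $\Xi(t)$ is the norm defined in \eqref{eq:xinorm}. Theorem \ref{main} supplies the estimate $\Xi(t) \le \Upsilon_1^2\,\Xi(0)\,e^{-2\sigma_1 t}$, while Lemma \ref{lem:uxvx} furnishes the exponential decay of $\|u_{tx}(\cdot,t)\|^2+\|w_{tx}(\cdot,t)\|^2$ (and $\|u_{xx}\|^2+\|w_{xx}\|^2$) in terms of the $H^2\times H^1$ initial data. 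Taking the slower of the two decay rates as $\sigma_2$ and combining the prefactors into $\Upsilon_2$ yields exactly \eqref{eq:Ub}; boundedness for all $t\ge 0$ is then immediate from the exponential bound.

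The main obstacle I expect is the boundary trace term: unlike the integrals and ODE states, $u_t(l(t),t)$ and $w_t(l(t),t)$ cannot be controlled using only the $H^1$-level energy of Theorem \ref{main}, which is precisely why Lemma \ref{lem:uxvx} is needed as an intermediate step (and why the stronger $H^2\times H^1$ regularity of the initial data appears on the right-hand side of \eqref{eq:Ub}). A secondary technical point is that the upper limit $l(t)$ of integration is time-varying, so one must confirm uniform-in-$t$ boundedness of the kernel evaluations $F_{ij}(l(t),\cdot),N_{ij}(l(t),\cdot),\lambda_{ij}(l(t))$; this is available from Lemma \ref{lm:wp} combined with Assumption \ref{as:a0} that keeps $l(t)\in(0,L]$, so the argument remains valid throughout the operation.
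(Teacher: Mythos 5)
Your proposal is correct, and it rests on the same key insight as the paper's proof: the integral and ODE terms in \eqref{eq:control1}--\eqref{eq:control2} decay already at the $H^1$ level by Theorem \ref{main} (via Cauchy--Schwarz and the $L^\infty$ kernel bounds of Lemma \ref{lm:wp}), so the only genuinely delicate piece is the boundary trace, which forces the $H^2$-level estimate of Lemma \ref{lem:uxvx} into the argument. Where you differ is in the bookkeeping of that trace. The paper works entirely in the Riemann variables: it isolates $p(l(t),t)$, writes $|p(l(t),t)|\le 2|p(0,t)|+2\sqrt{L}\|p_x(\cdot,t)\|$, converts $p(0,t)$ into $C_3W(t)-r(0,t)$ via the boundary condition, applies a second trace inequality to reach $r(l(t),t)$, and finally disposes of $r(l(t),t)$ using the inverse transformation \eqref{eq:It2} together with $\beta(l(t),t)=0$. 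You instead stay in the physical variables and anchor the trace directly at $x=0$, writing $u_t(l(t),t)=u_t(0,t)+\int_0^{l(t)}u_{tx}\,dx$; since $|u_t(0,t)|^2$ and $|w_t(0,t)|^2$ are components of $\Xi(t)$ controlled by Theorem \ref{main}, this collapses the paper's three-step chain ($p(0,t)\to r(0,t)\to r(l(t),t)$) into a single step and avoids invoking the inverse backstepping transformation and the target boundary condition \eqref{eq:tar5} altogether. Both routes land on the same pair of ingredients ($\Xi(t)$ decay plus $\|u_{tx}\|,\|w_{tx}\|$ decay), and your version is arguably the more economical of the two; the paper's version has the mild advantage of phrasing everything in the transformed coordinates where the controller \eqref{eq:U1} is most naturally expressed. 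Your two technical caveats (uniformity in $t$ of the kernel evaluations at $x=l(t)$, guaranteed by solving the kernels on the extended fixed domain $\mathcal{D}_0$ under Assumption \ref{as:a0}, and the necessity of the $H^2\times H^1$ initial regularity) are exactly the right ones.
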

\begin{proof}
Considering \eqref{eq:U1} and the exponential stability result in Theorem \ref{main}, we know once $p(l(t),t)=[{u_t}(l(t),t) - \sqrt {{d_6}(l(t))} {u_x}(l(t),t)$, ${w_t}(l(t),t) - \sqrt {{d_1}(l(t))} {w_x}(l(t),t)]^T$ is made sure to be exponentially convergent zero in the sense of $|p(l(t),t)|^2$, the exponential convergence of the control input is obtained.

Applying Cauchy-Schwarz inequality and recalling \eqref{eq:o4}-\eqref{eq:o5}, one obtain
\begin{align}
|p(l(t),t)|&\le 2|p(0,t)|+2\sqrt{L}\|p_x(\cdot,t)\|\notag\\
&\le 4|r(0,t)|+4|C_3W(t)|+2\sqrt{L}\|p_x(\cdot,t)\|\notag\\
&\le 8|r(l(t),t)|+8\sqrt{L}\|r_x(\cdot,t)\|+4|C_3W(t)|\notag\\
&\quad+2\sqrt{L}\|p_x(\cdot,t)\|.\label{eq:ul}
\end{align}
Recalling \eqref{eq:It2}, \eqref{eq:tar5} and the exponential convergence of $\|\alpha(\cdot,t)\|^2, \|\beta(\cdot,t)\|^2, |W(t)|^2$ proved in Theorem \ref{main},  we have $|r(l(t),t)|$ is exponentially convergent to zero.
Recalling Lemma \ref{lem:uxvx}, we thus have that $|p(l(t),t)|$ is exponentially convergent to zero. The proof of Theorem \ref{th:ecU} is completed.
\end{proof}
\section{Observer design}\label{sec:observer}
\subsection{Observer structure}
Consider the sensors only are placed at the actuated boundary, an observer should be designed to estimate the in-domain and uncontrolled boundary states required in the state-feedback control laws \eqref{eq:control1}-\eqref{eq:control2}. The available measurements are $u_t(l(t),t)$, $w_t(l(t),t)$, i.e., $p(l(t),t)$ being known through a convertor as
\begin{align}
p(l(t),t)=&[u_t(l(t),t)-\sqrt{d_6(l(t))}d_{19}(l(t))U_1(t),\notag\\
&w_t(l(t),t)-\sqrt{d_1(l(t))}d_{20}(l(t))U_2(t)]\label{eq:convert}
\end{align}
recalling \eqref{eq:o5}-\eqref{eq:o6}, \eqref{eq:dep}, \eqref{eq:zw} and \eqref{eq:yu}.

Using the known signal $p(l(t),t)$, the observer for the coupled wave PDE plant \eqref{eq:o1}-\eqref{eq:o6} is constructed as:
\begin{align}
&\hat w_t(x,t)=\frac{1}{2}(\hat z(x,t)+\hat v(x,t)),\label{eq:wt}\\
&\hat w_x(x,t)=\frac{1}{2\sqrt {d_1(x)}}(\hat z(x,t)-\hat v(x,t)),\label{eq:wx}\\
&\hat u_t(x,t)=\frac{1}{2}(\hat k(x,t)+\hat y(x,t)),\label{eq:ut}\\
&\hat u_x(x,t)=\frac{1}{2\sqrt {d_6(x)}}(\hat k(x,t)-\hat y(x,t)),\label{eq:ux}\\
&{\hat p_t}(x,t) + Q(x){\hat p_x}(x,t) = {T_a}(x)\hat r(x,t) + {T_b}(x)\hat p(x,t)\notag\\
&+\Gamma_1(x,t)(p(l(t),t)-\hat p(l(t),t)),\label{eq:Op1}\\
&{\hat r_t}(x,t) - Q(x){\hat r_x}(x,t) = {T_a}(x)\hat r(x,t) + {T_b}(x)\hat p(x,t)\notag\\
&+\Gamma_2(x,t)(p(l(t),t)-\hat p(l(t),t)),\label{eq:Op2}\\
&\hat p(0,t) = {C_3}\hat W(t) - \hat r(0,t),\\
&\dot {\hat W}(t) = (\bar A - \bar B{C_3})\hat W(t) + 2\bar B\hat r(0,t)\notag\\
&+\Gamma_3(t)(p(l(t),t)-\hat p(l(t),t)),\label{eq:Op4}\\
&\hat r(l(t),t) = R(l(t))U(t) + p(l(t),t)\label{eq:Op5}
\end{align}
where $\hat p=[\hat y(x,t), \hat v(x,t)]^T$, $\hat r=[\hat k(x,t), \hat z(x,t)]^T$, $\hat W(t)=[\hat X(t),\hat Y(t)]^T=[\hat w(0,t),\hat w_t(0,t),\hat u(0,t),\hat u_t(0,t)]^T$.

Note that the observer consists of two parts:

1. \eqref{eq:Op1}-\eqref{eq:Op5} in the sense of a copy of plant \eqref{eq:p1}-\eqref{eq:p5} plus output injections is built to estimate $p(x,t),r(x,t)$;

2. Once $p(x,t),r(x,t)$ are estimated successfully by \eqref{eq:Op1}-\eqref{eq:Op5}, the estimations of the original plant are straightly obtained as \eqref{eq:wt}-\eqref{eq:ux} considering Riemann transformations \eqref{eq:zw}-\eqref{eq:yu}.

Next, the observer gains $\Gamma_1(x,t)$, $\Gamma_2(x,t)$ and $\Gamma_3(t)$ will be determined to achieve the exponential stability of the  observer error system which can be seen in the next subsection. A difference from the traditional ones should be noted that $\Gamma_1$, $\Gamma_2$  not only depend on the spatial variable $x$ but also depend on time $t$ because of the time-varying domain.
\subsection{Observer error system}
The observation problem is essentially to ensure
the observer errors (differences between the
estimated and real states) are reduced to zero, by defining adequate observer
gains. Denote the observer errors as
\begin{align}
\tilde w_t(x,t)&=w_t(x,t)-\hat w_t(x,t),\label{eq:twt}\\
\tilde w_x(x,t)&=w_x(x,t)-\hat w_x(x,t),\\
\tilde u_t(x,t)&=u_t(x,t)-\hat u_t(x,t),\\
\tilde u_x(x,t)&=u_x(x,t)-\hat u_x(x,t),\label{eq:tux}\\
\tilde W(x,t)&=W(x,t)-\hat W(t)\notag\\
&=[X(t),Y(t)]-[\hat X(t),\hat Y(t)]\notag\\
&=[w(0,t),w_t(0,t),u(0,t),u_t(0,t)]^T\notag\\
&\quad-[\hat w(0,t),\hat w_t(0,t),\hat u(0,t),\hat u_t(0,t)]^T\notag\\
&=[\tilde X(t),\tilde Y(t)]\notag\\
&=[\tilde w(0,t),\tilde w_t(0,t),\tilde u(0,t),\tilde u_t(0,t)]^T,\label{eq:tXY}\\
\tilde p(x,t)&=p(x,t)-\hat p(x,t)=[\tilde y(x,t), \tilde v(x,t)],\label{eq:tp}\\
\tilde r(x,t)&=r(x,t)-\hat r(x,t)=[\tilde k(x,t), \tilde z(x,t)].\label{eq:tr}
\end{align}
Recalling \eqref{eq:p1}-\eqref{eq:p5}, \eqref{eq:zw}-\eqref{eq:yu} and \eqref{eq:wt}-\eqref{eq:Op5}, the resulting observer error dynamics is given by
\begin{align}
&\tilde w_t(x,t)=\frac{1}{2}(\tilde z(x,t)+\tilde v(x,t)),\label{eq:ewt}\\
&\tilde w_x(x,t)=\frac{1}{2\sqrt {d_1(x)}}(\tilde z(x,t)-\tilde v(x,t)),\label{eq:ewx}\\
&\tilde u_t(x,t)=\frac{1}{2}(\tilde k(x,t)+\tilde y(x,t)),\label{eq:eut}\\
&\tilde u_x(x,t)=\frac{1}{2\sqrt {d_6(x)}}(\tilde k(x,t)-\tilde y(x,t)),\label{eq:eux}\\
&{\tilde p_t}(x,t) + Q(x){\tilde p_x}(x,t) = {T_a}(x)\tilde r(x,t) + {T_b}(x)\tilde p(x,t)\notag\\
&+\Gamma_1(x,t)\tilde p(l(t),t),\label{eq:Op1e}\\
&{\tilde r_t}(x,t) - Q(x){\tilde r_x}(x,t) = {T_a}(x)\tilde r(x,t) + {T_b}(x)\tilde p(x,t)\notag\\
&+\Gamma_2(x,t)\tilde p(l(t),t),\label{eq:Op2e}\\
&\tilde p(0,t) = {C_3}\tilde W(t) - \tilde r(0,t),\label{eq:Op3e}\\
&\dot {\tilde W}(t) = (\bar A - \bar B{C_3})\tilde W(t) + 2\bar B\tilde r(0,t)\notag\\
&+\Gamma_3(t)\tilde p(l(t),t),\label{eq:Op4e}\\
&\tilde r(l(t),t) = 0,\label{eq:Op5e}
\end{align}
where the subsystem \eqref{eq:Op1e}-\eqref{eq:Op5e} describing dynamics of the observer error of the system \eqref{eq:p1}-\eqref{eq:p5}, determines the observer error of the plant \eqref{eq:o1}-\eqref{eq:o6} via \eqref{eq:ewt}-\eqref{eq:eux}. Therefore, the exponential stability of \eqref{eq:Op1e}-\eqref{eq:Op5e} is the core to make sure the proposed observer can be exponentially convergent to the actual states of the original plant \eqref{eq:o1}-\eqref{eq:o6}.
\subsection{Observer backtepping design}
To find the observer gains $\Gamma_1(x,t),\Gamma_2(x,t),\Gamma_3(t)$ that guarantee that
\eqref{eq:Op1e}-\eqref{eq:Op5e} is exponentially stable, we use a transformation to map \eqref{eq:Op1e}-\eqref{eq:Op5e} to a target observer error system whose exponential stability result is straightforward to obtain.

The transformation is introduced as
\begin{align}
\tilde p(x,t)=&\tilde \alpha(x,t)-\int_x^{l(t)}\bar \varphi(x,y)\tilde\alpha(y,t)dy,\label{eq:Tp}\\
\tilde r(x,t)=&\tilde \beta(x,t)-\int_x^{l(t)}\bar \psi(x,y)\tilde\alpha(y,t)dy,\\
\tilde W(t) =& \tilde S(t)+ \int_0^{l(t)} {\bar K(y)\tilde \alpha (y,t)} dy,\label{eq:TW}
\end{align}
where kernels $\bar \varphi(x,y)=\{\bar \varphi_{ij}(x,y)\}_{1\le i, j\le 2}$, $\bar \psi(x,y)=\{\bar \psi_{ij}(x,y)\}_{1\le i, j\le 2}$ on a triangular domain $\mathcal D_1=\{0\le x\le y\le l(t)\}$ and $\bar K(y)=\{\bar K_{ij}(y)\}_{1\le i\le 4,1\le j\le 2}$  are to be determined.

The target observer error system is set up as
\begin{align}
&{{\tilde \alpha }_t}(x,t) + Q(x){{\tilde \alpha }_x}(x,t) = {{T}_a}(x)\tilde \beta (x,t) + {{\bar T}_b}(x)\tilde \alpha (x,t)\notag\\
 &+ \int_x^{l(t)} {\bar M(x,y)} \tilde \beta (y,t)dy,\label{eq:etar1}\\
&{{\tilde \beta }_t}(x,t) - Q(x){{\tilde \beta }_x}(x,t) = \int_x^{l(t)} {\bar N(x,y)} \tilde \beta (y,t)dy\notag\\
&+T_a(x)\tilde \beta (x,t),\\
&\tilde \alpha (0,t) = {C_3}\tilde S(t)  - \tilde \beta (0,t)+ \int_0^{l(t)} {H(y) \tilde \alpha (y,t)} dy,\label{eq:etar3}\\
&\tilde \beta (l(t),t) = 0,\label{eq:etar5}\\
&\dot {\tilde S}(t) = \check A\tilde S(t)+\check E\tilde\beta(0,t)+\int_0^{l(t)}G(y)\tilde \beta (y,t) dy\label{eq:etar4}
\end{align}
where $\check A=\bar A - \bar B{C_3} - {L_0}{C_3}$ is a Hurwitz matrix by choosing $L_{0}=\{L_{0ij}\}_{1\le i\le 4,1\le j\le 2}$ recalling Assumption \ref{as:a2}, and ${\bar M(x,y)}$, $\bar N(x,y)$ satisfy
\begin{align}
\bar M(x,y){\rm{ = }}\int_x^y {\bar \varphi(x,z)\bar M(z,y)} dz + \bar \varphi(x,y){T_a}(y),\\
\bar N(x,y){\rm{ = }}\int_x^y {\bar \psi (x,z)\bar M(z,y)} dz + \bar \psi (x,y){T_a}(y).
\end{align}
Note that $H(y)=\{h_{ij}(y)\}_{1\le i,j\le 2}$ in \eqref{eq:etar3} is a strict lower triangular matrix as
\begin{align}
H(y)=\left(
       \begin{array}{cc}
         0 & 0 \\
        \bar \psi_{2,1} (0,y) + \bar \varphi_{2,1} (0,y)+\bar K_{21}(y) & 0 \\
       \end{array}
     \right),\label{eq:Hy}
\end{align}
and $G(y)=\{G_{ij}(y)\}_{1\le i\le 4,1\le j\le 2}, \check E=\{\check E_{ij}\}_{1\le i\le 4,1\le j\le 2}$ in \eqref{eq:etar4} are equal to $- \bar K(0,y){{\bar T}_a}(y)-\int_0^{y}{\bar K(0,z)\bar M(z,y)dz}$ and $L_0+2\bar B$ respectively. The exponential stability of the target system \eqref{eq:etar1}-\eqref{eq:etar4} will be seen in Lemma \ref{thm:ob}.

By matching \eqref{eq:Op1e}-\eqref{eq:Op5e} and \eqref{eq:etar1}-\eqref{eq:etar4} through the transformation \eqref{eq:Tp}-\eqref{eq:TW}, the conditions on the kernels in \eqref{eq:Tp}-\eqref{eq:TW} and observer gains in \eqref{eq:Op1}, \eqref{eq:Op2}, \eqref{eq:Op4} are obtained as follows.
Kernels $\bar \varphi(x,y)$, $\bar \psi(x,y)$, $\bar K(y)$ should satisfy the matrix equations:
\begin{align}
&-{{\bar \varphi }_y}(x,y)Q(y) - Q(x){{\bar \varphi }_x}(x,y) - \bar \varphi(x,y)Q'(y) \notag\\
&+ {T_a}(x)\bar \psi (x,y) + {T_b}(x)\bar \varphi(x,y) - \bar \varphi(x,y){{\bar T}_b}(y)=0,\label{eq:ek1}\\
& - {{\bar \psi }_y}(x,y)Q(y) + Q(x){{\bar \psi }_x}(x,y) - \bar \psi (x,y)Q'(y)\notag\\
 & + {T_a}(x)\bar \psi (x,y) - \bar \psi (x,y){{\bar T}_b}(y)+ {T_b}(x)\bar \varphi(x,y)=0,\\
 &Q(x)\bar \varphi (x,x) - \bar \varphi (x,x)Q(x) ={T_b}(x)- {{\bar T}_b}(x) ,\\
&Q(x)\bar \psi (x,x) + \bar \psi (x,x)Q(x) {\rm{ = }}-{T_b}(x),\label{eq:ek4}\\
&\bar \psi (0,y) + \bar \varphi (0,y)+C_3\bar K(y)=H(y),\label{eq:ek5}\\
& - \bar K'(y)Q(y) + (\bar A - \bar B{C_3} - {L_0}{C_3})\bar K(y)\notag\\
&- \bar K(y)[Q'(y) + {{\bar T}_b}(y)]\notag\\
 & - {L_0}\bar \varphi (0,y) - (2\bar B + {L_0})\bar \psi (0,y) = 0,\label{eq:ek7}\\
&\bar K(0) = {L_0}Q{(0)^{ - 1}}\label{eq:ek8}
\end{align}
and the observe gains are obtained as
\begin{align}
{\Gamma _1}(x,t)&=\dot l(t)\bar \varphi (x,l(t)) - \bar \varphi (x,l(t))Q(l(t)),\label{eq:Ga1}\\
{\Gamma _2}(x,t)&=\dot l(t)\bar \psi (x,l(t)) - \bar \psi (x,l(t))Q(l(t)),\label{eq:Ga2}\\
{\Gamma _3}(t) &= \dot l(t)\bar K(l(t)) - \bar K(l(t))Q(l(t)).\label{eq:Ga3}
\end{align}
\begin{lem}\label{lm:wpe}
After adding an additional artificial boundary condition for the element $\bar \varphi_{21}$ in the matrix $\bar \varphi$, the matrix equations \eqref{eq:ek1}-\eqref{eq:ek8} have a unique solution $\bar \varphi ,\bar \psi\in L^{\infty}(\mathcal D_1)$, $\bar K\in L^{\infty}([0,l(t)])$.
\end{lem}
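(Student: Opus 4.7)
The plan is to mimic the proof of Lemma \ref{lm:wp}, adapting the argument to the observer kernels $\bar\varphi$, $\bar\psi$, and $\bar K$ governed by \eqref{eq:ek1}--\eqref{eq:ek8}. These form a pair of first-order hyperbolic matrix PDEs on the triangular domain $\mathcal{D}_1=\{0\le x\le y\le l(t)\}$, coupled across the diagonal $x=y$ to the finite-dimensional ODE for $\bar K(y)$ on $[0,l(t)]$ together with the non-local boundary constraint \eqref{eq:ek5}, in which the specific lower-triangular form \eqref{eq:Hy} of $H(y)$ plays a key role. I would treat the combined system as a coupled set of Volterra-type integral equations and solve it by successive approximation in a weighted $L^{\infty}$ space.

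First, for each entry $(i,j)$ I would introduce characteristic coordinates for \eqref{eq:ek1}--\eqref{eq:ek2}. By Assumptions \ref{as:ab} and \ref{as:a3}, the transport slopes $Q_i(x)/Q_j(y)$ are uniformly bounded above and below; hence every characteristic launched backward from the diagonal $x=y$ stays in $\mathcal{D}_1$ until it exits through $x=0$ or $y=l(t)$, so \eqref{eq:ek1}--\eqref{eq:ek2} integrate into Volterra integral equations with $\bar\varphi(x,x)$ and $\bar\psi(x,x)$ as data. Equation \eqref{eq:ek4}, $Q\bar\psi(x,x)+\bar\psi(x,x)Q=-T_b(x)$, uniquely determines every entry of $\bar\psi(x,x)$ since $Q_1(x)+Q_2(x)>0$; and equation \eqref{eq:ek3}, $(Q_i(x)-Q_j(x))\bar\varphi_{ij}(x,x)=(T_b-\bar T_b)_{ij}(x)$, uniquely fixes the off-diagonal entries by Assumption \ref{as:a3} ($d_1\ne d_6$) and is automatically consistent on the diagonal by construction of $\bar T_b$. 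The residual free parameter that the transport structure leaves for $\bar\varphi_{21}$ is eliminated by the additional artificial boundary condition in the lemma's hypothesis; this is admissible because $\bar\varphi_{21}$ enters the target dynamics \eqref{eq:etar3} only through the already-free lower-triangular injection $H(y)$ in \eqref{eq:Hy}.

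Second, once $\bar\varphi(0,\cdot)$ and $\bar\psi(0,\cdot)$ are carried as unknowns in the iteration, \eqref{eq:ek7}--\eqref{eq:ek8} is a linear first-order ODE on $[0,l(t)]$ with bounded coefficients (Assumption \ref{as:ab}) driven by those two boundary traces; classical ODE theory then produces a unique $\bar K\in L^{\infty}([0,l(t)])$ that depends linearly and continuously on the inputs. Substituting back and imposing \eqref{eq:ek5} with $H$ given by \eqref{eq:Hy} closes the Volterra system for $(\bar\varphi,\bar\psi)$ on $\mathcal{D}_1$.

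Third, I would set up the contraction: define an operator $\mathcal{T}$ taking a candidate $(\bar\varphi,\bar\psi)\in L^{\infty}(\mathcal{D}_1)^{2}$ to the next iterate via the integrated equations, and measure progress in the weighted norm $\|f\|_{w}=\sup_{\mathcal{D}_1}e^{-\lambda(x+y)}|f(x,y)|$. Using the boundedness of all coefficients and the finite length $L$, for $\lambda$ sufficiently large $\mathcal{T}$ becomes a strict contraction, exactly as in the proof of Lemma \ref{lm:wp}. The Banach fixed-point theorem then yields the unique bounded solution, and feeding it back into the $\bar K$-ODE produces the unique $\bar K\in L^{\infty}([0,l(t)])$. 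The main technical obstacle is the bookkeeping required to show that the artificial condition on $\bar\varphi_{21}$, the ODE \eqref{eq:ek7}--\eqref{eq:ek8}, and the non-local boundary condition \eqref{eq:ek5}--\eqref{eq:Hy} are simultaneously consistent at each iterate, so that the system is neither over- nor under-determined; once this is verified, the contraction estimate is routine and the lemma follows.
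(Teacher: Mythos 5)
Your outline is essentially sound, but it takes a genuinely different route from the paper. The paper's own proof of Lemma \ref{lm:wpe} is a two-step reduction: it swaps the arguments $(x,y)\mapsto(y,x)$ (as in B.9--B.10 of \cite{Anfinsen2017Disturbance}) so that $\mathcal D_1$ becomes $\mathcal D$ and \eqref{eq:ek1}--\eqref{eq:ek8} become structurally identical to the controller kernel equations \eqref{eq:Kerc1}--\eqref{eq:Kerc6}, and then invokes the proof of Lemma \ref{lm:wp} wholesale --- including the extension of the time-varying domain to the fixed domain $\mathcal D_0$ (using Assumption \ref{as:a0}) and the artificial condition analogous to \eqref{eq:N21L} --- which in turn rests on the well-posedness result of \cite{Meglio2017Stabilization}. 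You instead re-derive well-posedness from scratch via characteristics, Volterra integral equations, and a weighted-norm contraction; this is precisely the machinery that the cited reference uses internally, so your argument is more self-contained but duplicates work the paper outsources. Two points in your sketch deserve tightening. First, your claim that the characteristics of \eqref{eq:ek1}--\eqref{eq:ek2} integrate into Volterra equations ``with $\bar\varphi(x,x)$ and $\bar\psi(x,x)$ as data'' is imprecise: \eqref{eq:ek4} determines all of $\bar\psi(x,x)$, but \eqref{eq:ek3} determines only the off-diagonal entries of $\bar\varphi(x,x)$; the diagonal entries $\bar\varphi_{11},\bar\varphi_{22}$ receive their data from the nonlocal boundary condition \eqref{eq:ek5} at $x=0$, which couples them to $\bar K$ and $\bar\psi(0,\cdot)$ --- you acknowledge this later when you say \eqref{eq:ek5} ``closes'' the system, but the characteristic bookkeeping should be stated entry by entry, since it is exactly this bookkeeping that reveals why only $\bar\varphi_{21}$ needs an artificial condition. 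Second, you should solve on the fixed extended domain $\{0\le x\le y\le L\}$ and restrict to $\mathcal D_1$, rather than working on the time-varying domain directly; the artificial condition for $\bar\varphi_{21}$ is naturally imposed on the extended outer boundary $y=L$, mirroring \eqref{eq:N21L}. With those repairs your fixed-point argument goes through and yields the same conclusion.
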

\begin{proof}
 After swapping positions of arguments as B.9-B.10 in \cite{Anfinsen2017Disturbance}, i.e., changing the domain $\mathcal D_1$ to $\mathcal D$, \eqref{eq:ek1}-\eqref{eq:ek8} has the analogous form with kernels $F(x,y),N(x,y),\lambda(y)$ \eqref{eq:Kerc1}-\eqref{eq:Kerc6}. Following the steps  in the proof of Lemma \ref{lm:wp}, including introducing the extended domain $\mathcal D_0$ and adding the additional artificial boundary condition, Lemma \ref{lm:wpe} can be obtained.
\end{proof}
Following similar steps as above, the inverse transformation of \eqref{eq:Tp}-\eqref{eq:TW} can be determined as
\begin{align}
\tilde \alpha(x,t)=&\tilde p(x,t)-\int_x^{l(t)}\check \varphi(x,y)\tilde p(y,t)dy,\label{eq:ITp}\\
\tilde \beta(x,t)=&\tilde r(x,t)-\int_x^{l(t)}\check \psi(x,y)\tilde p(y,t)dy,\\
\tilde S(t) =& \tilde W(t)+ \int_0^{l(t)} {\check K(y)\tilde r (y,t)} dy,\label{eq:ITW}
\end{align}
where $\check \varphi(x,y)\in R^{2\times 2}$, $\check \psi(x,y)\in R^{2\times 2}$ and $\check K(y)\in R^{4\times 2}$ are kernels on $\mathcal D_1$ and $0\le y\le l(t)$, respectively.
\subsection{Stability analysis of observer error system}
Before showing the performance of the proposed observer on tracking the actual states in the original plant \eqref{eq:o1}-\eqref{eq:o6} in the next theorem, the stability result of the observer error subsystem \eqref{eq:Op1e}-\eqref{eq:Op5e} which dominates the observer errors  of the original plant \eqref{eq:o1}-\eqref{eq:o6} is given in the following lemma.
\begin{lem}\label{thm:ob}
Consider the observer error subsystem \eqref{eq:Op1e}-\eqref{eq:Op5e},
there exist
positive constants $\Upsilon_3,\sigma_3$ such that
\begin{align}
&\quad\left(\|\tilde p(\cdot,t)\|^2+\|\tilde r(\cdot,t)\|^2+\left|\tilde W(t)\right|^2\right)^{\frac{1}{2}}\notag\\
&\le \Upsilon_3\bigg(\|\tilde p(\cdot,0)\|^2+\|\tilde r(\cdot,0)\|^2+\left|\tilde W(0)\right|^2\bigg)^{\frac{1}{2}}e^{-\sigma_3 t}.\label{eq:norm3}
\end{align}
\end{lem}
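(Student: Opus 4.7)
The plan is to mirror the state-feedback stability argument (Theorem \ref{main}), but now for the observer error target system \eqref{eq:etar1}-\eqref{eq:etar4}. First, I would exploit the invertibility of the observer backstepping transformation \eqref{eq:Tp}-\eqref{eq:TW}, whose inverse \eqref{eq:ITp}-\eqref{eq:ITW} is well-posed by Lemma \ref{lm:wpe}, to conclude that exponential decay of $(\tilde\alpha,\tilde\beta,\tilde S)$ in an $L^2\times L^2\times\mathbb{R}^4$ norm is equivalent to exponential decay of $(\tilde p,\tilde r,\tilde W)$ in the corresponding norm, up to constants determined by the $L^\infty$ bounds of the kernels $\bar\varphi,\bar\psi,\bar K$ and their inverses $\check\varphi,\check\psi,\check K$.

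Next I would construct the Lyapunov candidate
\begin{align*}
V_3(t) &= \tilde S(t)^T P_2 \tilde S(t) + \tfrac{1}{2}\int_0^{l(t)} e^{-\delta_3 x}\tilde\alpha(x,t)^T R_c Q(x)^{-1}\tilde\alpha(x,t)\,dx \\
&\quad + \tfrac{1}{2}\int_0^{l(t)} e^{\delta_4 x}\tilde\beta(x,t)^T R_d Q(x)^{-1}\tilde\beta(x,t)\,dx,
\end{align*}
where $P_2=P_2^T>0$ solves $P_2\check A+\check A^T P_2=-\hat Q_2$ for some $\hat Q_2>0$, guaranteed by the Hurwitz property of $\check A$ built into the design through $L_0$ under Assumption \ref{as:a2}. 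The weights $R_c,R_d$ are diagonal positive, and $\delta_3,\delta_4$ will be chosen large. Differentiating along \eqref{eq:etar1}-\eqref{eq:etar4} and integrating by parts in the transport terms produces: a negative definite ODE contribution $-\tilde S^T\hat Q_2\tilde S$; interior negative terms $-\tfrac{\delta_3}{2}\int e^{-\delta_3 x}\tilde\alpha^T R_c\tilde\alpha\,dx$ and $-\tfrac{\delta_4}{2}\int e^{\delta_4 x}\tilde\beta^T R_d\tilde\beta\,dx$; a boundary contribution at $x=l(t)$ given by $-\tfrac{1}{2}e^{-\delta_3 l(t)}\tilde\alpha(l(t),t)^T R_c(I-\dot l(t)Q(l(t))^{-1})\tilde\alpha(l(t),t)$, which is nonpositive thanks to Assumption \ref{as:a1} (the $\tilde\beta(l(t),t)$ boundary term is eliminated by \eqref{eq:etar5}); and a boundary contribution at $x=0$ arising from $\tfrac{1}{2}\tilde\alpha(0,t)^T R_c\tilde\alpha(0,t) - \tfrac{1}{2}\tilde\beta(0,t)^T R_d\tilde\beta(0,t)$ that must be handled using \eqref{eq:etar3}.

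The main obstacle will be absorbing (i) the nonhomogeneous term $\tilde\alpha(0,t)=C_3\tilde S-\tilde\beta(0,t)+\int_0^{l(t)}H(y)\tilde\alpha(y,t)\,dy$, which, after $|a+b+c|^2\le 3(|a|^2+|b|^2+|c|^2)$ and the $L^\infty$ boundedness of $H$, generates positive multiples of $|\tilde S|^2$, $|\tilde\beta(0,t)|^2$, and $\|\tilde\alpha(\cdot,t)\|^2$; (ii) the cross-term $2\tilde S^TP_2\check E\tilde\beta(0,t)$, handled by Young's inequality into $|\tilde S|^2$ plus $|\tilde\beta(0,t)|^2$; and (iii) the nonlocal perturbations $\int_x^{l(t)}\bar M(x,y)\tilde\beta(y,t)\,dy$ and $\int_x^{l(t)}\bar N(x,y)\tilde\beta(y,t)\,dy$ in \eqref{eq:etar1}, together with the off-diagonal terms $T_a(x)\tilde\beta$ and the analogous one $\int_0^{l(t)}G(y)\tilde\beta(y,t)\,dy$ in \eqref{eq:etar4}. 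For the integral perturbations I would use Young's inequality followed by Cauchy-Schwarz and Fubini to turn them into positive multiples of $\int_0^{l(t)}|\tilde\beta(y,t)|^2 dy$ controlled by $L$ and the $L^\infty$ norms of $\bar M,\bar N,G$.

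Once all positive contributions are collected, $|\tilde S|^2$ is dominated by $\tilde S^T\hat Q_2\tilde S$ provided the coefficients multiplying $|\tilde S|^2$ are smaller than $\lambda_{\min}(\hat Q_2)$; $|\tilde\beta(0,t)|^2$ is dominated by the negative term produced by the boundary evaluation of the $\tilde\beta$ integration by parts by choosing $R_d$ large enough relative to $R_c$ and $P_2\check E$; and $\|\tilde\alpha\|^2,\|\tilde\beta\|^2$ are dominated by the interior $-\delta_3,-\delta_4$ terms by choosing $\delta_3,\delta_4$ large enough (also subsuming the $\bar T_b$, $T_a$ and integral contributions using Assumption \ref{as:ab} and Lemma \ref{lm:wpe}). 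This yields $\dot V_3(t)\le -\eta_3 V_3(t)$ for some $\eta_3>0$, hence $V_3(t)\le V_3(0)e^{-\eta_3 t}$. Using the equivalence of $V_3$ with $|\tilde S|^2+\|\tilde\alpha\|^2+\|\tilde\beta\|^2$ (bounds $\mu_{3a},\mu_{3b}$) and then the invertible transformation \eqref{eq:ITp}-\eqref{eq:ITW} to return to $(\tilde p,\tilde r,\tilde W)$, the estimate \eqref{eq:norm3} follows with $\sigma_3=\eta_3/2$ and $\Upsilon_3$ collecting all the constants.
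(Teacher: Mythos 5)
Your overall architecture (weighted Lyapunov functional on the target observer error system, then transfer back through the invertible transformations \eqref{eq:ITp}--\eqref{eq:ITW}) is the same as the paper's, and most of your absorption steps are sound: the $\tilde\beta(l(t),t)=0$ inflow, the sign of the moving-boundary term under Assumption \ref{as:a1}, the domination of $|\tilde\beta(0,t)|^2$ by taking the $\tilde\beta$-weight large, and the treatment of the nonlocal $\bar M,\bar N,G$ integrals (which land on $\|\tilde\beta\|^2$ and are indeed absorbable because the $\tilde\beta$ weight $e^{\delta_4 x}\ge 1$ makes the interior damping grow linearly in $\delta_4$).

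The genuine gap is in item (i): you propose to absorb the positive multiple of $\|\tilde\alpha(\cdot,t)\|^2$ generated by the inflow boundary condition \eqref{eq:etar3} ``by choosing $\delta_3$ large enough.'' This cannot work. The boundary term enters at $x=0$ with weight $e^{-\delta_3\cdot 0}=1$ and produces an \emph{unweighted} term $C\|H\|_\infty^2 L\,\|\tilde\alpha(\cdot,t)\|^2$, whereas the interior damping is $-\tfrac{\delta_3}{2}\int_0^{l(t)}e^{-\delta_3 x}|\tilde\alpha|^2R_c\,dx\le -\tfrac{\delta_3}{2}e^{-\delta_3 L}\,r_c\|\tilde\alpha\|^2$; since $\delta_3 e^{-\delta_3 L}$ is bounded above by $1/(eL)$ and tends to zero as $\delta_3\to\infty$, increasing $\delta_3$ makes the damping coefficient \emph{worse}, not better. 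A generic bounded $H$ would force a smallness condition that is not available. What rescues the argument --- and what the paper's proof is built around --- is that $H(y)$ in \eqref{eq:Hy} is \emph{strictly lower triangular}: the nonlocal feedback only sends $\tilde\alpha_1$ into the inflow of $\tilde\alpha_2$, while $\tilde\alpha_1$'s own inflow contains no $\|\tilde\alpha\|$ term. The paper exploits this as a cascade ($\tilde\beta\equiv 0$ after finite time by \eqref{eq:cetar5}, then $\tilde S$, then $\tilde\alpha_1$, then $\tilde\alpha_2$), and in the Lyapunov functional \eqref{eq:Ve} this corresponds to weighting the components separately and choosing $\check r_{b2}$ sufficiently \emph{small} relative to $\check r_{b1}$, so that $\check r_{b2}\,L\|h_{21}\|_\infty^2\|\tilde\alpha_1\|^2$ is dominated by the $\check r_{b1}$-proportional damping of $\tilde\alpha_1$. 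Your single ``choose $\delta_3$ large'' step must be replaced by this component ordering (or by the explicit cascade argument); without it the proof does not close.
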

\begin{proof}
Expanding \eqref{eq:etar1}-\eqref{eq:etar4} as $\tilde\alpha=[\tilde\alpha_1,\tilde\alpha_2]^T$, $\tilde\beta=[\tilde\beta_1,\tilde\beta_2]^T$, one obtains
\begin{align}
&\tilde \alpha _{it}(x,t) + Q_{i}(x){\tilde \alpha }_{ix}(x,t) =\sum_{j=1}^{2} {T}_{aij}(x)\tilde \beta_j (x,t)\notag\\
 & + {\bar T}_{bi}(x)\tilde \alpha_i (x,t)+ \int_x^{l(t)} \sum_{j=1}^{2}{\bar M_{ij}(x,y)} \tilde \beta_j (y,t)dy,\label{eq:cetar1}\\
&{{\tilde \beta }_{it}}(x,t) - Q_{i}(x){{\tilde \beta }_{ix}}(x,t) = \int_x^{l(t)} \sum_{j=1}^{2}{\bar N_{ij}(x,y)} \tilde \beta_j (y,t)dy\notag\\
&+\sum_{j=1}^{2} {T}_{aij}(x)\tilde \beta_j (x,t),\\
&\tilde \alpha_i (0,t) = {C_3}\tilde S(t)  - \tilde \beta_i (0,t)+ (i-1)\int_0^{l(t)}{h_{21}}(y) \tilde \alpha_{1}(y,t) dy,\label{eq:cetar3}\\
&\tilde \beta_{i} (l(t),t) = 0\label{eq:cetar5}
\end{align}
for $i=1,2$, and $\tilde S(t)$ is governed by
\begin{align}
\dot {\tilde S}(t) =& \check A\tilde S(t)+\check E[\tilde \beta_1 (0,t),\tilde \beta_2 (0,t)]^T\notag\\
&+\int_0^{l(t)}G(y)[\tilde \beta_1 (y,t),\tilde \beta_2 (y,t)]^T dy.\label{eq:cetar6}
\end{align}
 In \eqref{eq:cetar1}-\eqref{eq:cetar6},  $\tilde \beta_i(\cdot,t)$ are independent and $\tilde \beta_i(\cdot,t)\equiv0$ after a finite time because of \eqref{eq:cetar5}. Thus $\tilde S(t)$ is exponentially convergent to zero because $\check A$ is Hurwitz.  $\tilde \alpha_{1}(\cdot,t)$ are exponentially convergent to zero because of the exponential convergence of $\tilde \alpha_1(0,t)$ considering \eqref{eq:cetar3} for $i=1$. $\tilde \alpha_{1}(\cdot,t)$ flow into $\tilde \alpha_{2}(0,t)$ through the boundary \eqref{eq:cetar3}, where  exponential convergence of  $\tilde \alpha_{2}(0,t)$ also can be obtained for $i=2$ because all signals at the right hand side of the equal sign are exponentially convergent to zero. It follows that $\tilde \alpha_2(\cdot,t)$ are exponentially convergent to zero as well.

The exponential stability result would be seen more clearly by using the following  Lyapunov function as
\begin{align}
&V_e(t)= \frac{\check r_{b1}}{2}\int_0^{l(t)} {e^{-\check\delta_1 x}}{\tilde\alpha_1}{{(x,t)}}^TQ_1(x)^{-1}{\tilde\alpha_1}{{(x,t)}} dx\notag\\
&+\frac{\check r_{a1}}{2}\int_0^{l(t)} {e^{\check\delta_2 x}}{\tilde\beta_1}{{(x,t)}}^TQ_1(x)^{-1}{\tilde\beta_1}{{(x,t)}} dx\notag\\
&+ \frac{\check r_{a2}}{2}\int_0^{l(t)} {e^{\check\delta_2 x}}{\tilde\beta_2}{{(x,t)}}^TQ_2(x)^{-1}{\tilde\beta_2}{{(x,t)}} dx+{{\tilde S}}(t)^T P_2\tilde S(t) \notag\\
&+ \frac{\check r_{b2}}{2}\int_0^{l(t)} {e^{ -\check\delta_1 x}}{\tilde\alpha_2}{{(x,t)}}^TQ_2(x)^{-1}{\tilde\alpha_2}{{(x,t)}} dx,\label{eq:Ve}
\end{align}
where a positive definite matrix $P_2 = {P_2^T} $ is the solution to the Lyapunov equation
$P_2\check A  + \check A^T P_2 =  -\hat  Q_2$, for some $\hat Q_2 = {{\hat Q_2}^T} > 0$, and $\check r_{a1},\check r_{a2},\check r_{b1},\check r_{b2},\check\delta_1,\check\delta_2$ are positive constants. The following inequality holds
\begin{align}
\mu_{e1}\Omega_e(t)\le V_e(t)\le\mu_{e2}\Omega_e(t)
\end{align}
for some positive $\mu_{e1},\mu_{e2}$, where $\Omega_e(t)=\|\tilde \alpha(\cdot,t)\|^2+\|\tilde \beta(\cdot,t)\|^2+|\tilde S(t)|^2$. Note that $\|\tilde \alpha(\cdot,t)\|^2=\int_0^{l(t)}\tilde \alpha_1(\cdot,t)^2dx+\int_0^{l(t)}\tilde \alpha_2(\cdot,t)^2dx$.

Taking the derivative of \eqref{eq:Ve} along \eqref{eq:cetar1}-\eqref{eq:cetar6}, choosing $\check r_{a1},\check r_{a2},\check r_{b1},\check r_{b2},\check\delta_1,\check\delta_2$ as a similar process in \eqref{eq:dV11}-\eqref{eq:V0}, we can obtain
\begin{align}
\dot V_e(t)\le -\eta_eV_e(t)
\end{align}
for some positive $\eta_e$ which is associated with the choice of $L_0$.
It follows that the exponential stability result in the sense of
\begin{align}
&\left(\|\tilde \alpha(x,t)\|^2+\|\tilde \beta(x,t)\|^2+|\tilde S(t)|^2\right)^{\frac{1}{2}}\notag\\
\le&\xi_{e}\left(\|\tilde \alpha(x,0)\|^2+\|\tilde \beta(x,0)\|^2+|\tilde S(0)|^2\right)^{\frac{1}{2}}e^{-\eta_e t},
\end{align}
for some positive $\xi_{e}$ and $\eta_e$.

Recalling the direct and inverse backstepping transformations \eqref{eq:Tp}-\eqref{eq:TW}, \eqref{eq:ITp}-\eqref{eq:ITW}, and applying Cauchy-Schwarz inequality,  the proof of Lemma \ref{thm:ob} is completed.

Applying the exponential stability result of the observer error subsystem \eqref{eq:Op1e}-\eqref{eq:Op5e} in Lemma \ref{thm:ob} and recalling the relationships \eqref{eq:ewt}-\eqref{eq:eux}, we obtain the following theorem about the performance of the observer on tracking the actual states in the original plant \eqref{eq:o1}-\eqref{eq:o6}.
\end{proof}
\begin{thm}\label{lem:tux}
 Considering the observer error system \eqref{eq:ewt}-\eqref{eq:Op5e} with the observer gains $\Gamma_1(x,t)$ \eqref{eq:Ga1}, $\Gamma_2(x,t)$ \eqref{eq:Ga2}, $\Gamma_3(t)$ \eqref{eq:Ga3}, for any initial data $(w(x, 0),w_t(x, 0)) \in H^{2}(0,L)\times H^{1}(0,L)$, $(u(x, 0),u_t(x, 0)) \in H^{2}(0,L)\times H^{1}(0,L)$, there exist positive constants $\Upsilon_4,\sigma_4$ such that
\begin{align}
&\quad\bigg(\|\tilde u_t(\cdot,t)\|^2+\|\tilde u_x(\cdot,t)\|^2+\|\tilde w_t(\cdot,t)\|^2+\|\tilde w_x(\cdot,t)\|^2\notag\\
&\quad+\tilde w(0,t)^2+\tilde w_t(0,t)^2+\tilde u(0,t)^2+\tilde u_t(0,t)^2\bigg)^{\frac{1}{2}}\notag\\
&\le \Upsilon_4\bigg(\|\tilde u_t(\cdot,0)\|^2+\|\tilde u_x(\cdot,0)\|^2+\|\tilde w_t(\cdot,0)\|^2+\|\tilde w_x(\cdot,0)\|^2\notag\\
&\quad+\tilde w(0,0)^2+\tilde w_t(0,0)^2+\tilde u(0,0)^2+\tilde u_t(0,0)^2\bigg)^{\frac{1}{2}}e^{-\sigma_4 t},
\end{align}
which means the observer states in \eqref{eq:wt}-\eqref{eq:Op5} can be exponentially convergent to the actual values in \eqref{eq:o1}-\eqref{eq:o6} according to \eqref{eq:twt}-\eqref{eq:tux}.
\end{thm}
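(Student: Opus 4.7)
The plan is to treat this theorem as a direct corollary of Lemma \ref{thm:ob}, with the only real work being the equivalence of norms between the original-variable errors $(\tilde w, \tilde u)$ and the Riemann-variable errors $(\tilde p, \tilde r, \tilde W)$ on which Lemma \ref{thm:ob} operates. The algebraic bridge is already laid down by \eqref{eq:ewt}--\eqref{eq:eux} and the definition \eqref{eq:tXY}: each of $\tilde w_t, \tilde w_x, \tilde u_t, \tilde u_x$ is a pointwise linear combination of the components of $\tilde p(x,t)$ and $\tilde r(x,t)$ with coefficients that are either $1/2$ or $\pm 1/(2\sqrt{d_1(x)}), \pm 1/(2\sqrt{d_6(x)})$, while $\tilde w(0,t), \tilde w_t(0,t), \tilde u(0,t), \tilde u_t(0,t)$ are exactly the four components of $\tilde W(t)$.

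First I would invoke Assumptions \ref{as:ab}--\ref{as:a3} together with Assumption \ref{as:a0} to guarantee that $d_1(x)$ and $d_6(x)$ are bounded away from zero on $[0,L]$, so that $1/\sqrt{d_1(x)}$ and $1/\sqrt{d_6(x)}$ admit uniform upper bounds. Combined with the elementary inequality $(a\pm b)^2 \le 2a^2 + 2b^2$ applied to \eqref{eq:ewt}--\eqref{eq:eux}, this gives a constant $\Theta_a > 0$ such that, for every $t \ge 0$,
\begin{align}
&\|\tilde u_t(\cdot,t)\|^2 + \|\tilde u_x(\cdot,t)\|^2 + \|\tilde w_t(\cdot,t)\|^2 + \|\tilde w_x(\cdot,t)\|^2 \notag\\
&\quad+ \tilde w(0,t)^2 + \tilde w_t(0,t)^2 + \tilde u(0,t)^2 + \tilde u_t(0,t)^2 \notag\\
&\le \Theta_a \left(\|\tilde p(\cdot,t)\|^2 + \|\tilde r(\cdot,t)\|^2 + |\tilde W(t)|^2\right).
\end{align}
The same argument in the opposite direction, using that $(\tilde z, \tilde v, \tilde k, \tilde y)$ are themselves linear combinations of $(\tilde w_t, \tilde w_x, \tilde u_t, \tilde u_x)$ with coefficients $1, \pm\sqrt{d_1(x)}, \pm\sqrt{d_6(x)}$ that are bounded above by Assumption \ref{as:ab}, yields a constant $\Theta_b > 0$ with the reverse inequality at $t=0$:
\begin{align}
&\|\tilde p(\cdot,0)\|^2 + \|\tilde r(\cdot,0)\|^2 + |\tilde W(0)|^2 \notag\\
&\le \Theta_b \bigl(\|\tilde u_t(\cdot,0)\|^2 + \|\tilde u_x(\cdot,0)\|^2 + \|\tilde w_t(\cdot,0)\|^2 + \|\tilde w_x(\cdot,0)\|^2 \notag\\
&\qquad+ \tilde w(0,0)^2 + \tilde w_t(0,0)^2 + \tilde u(0,0)^2 + \tilde u_t(0,0)^2\bigr).
\end{align}

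Chaining these two equivalences with the exponential decay estimate \eqref{eq:norm3} of Lemma \ref{thm:ob} then delivers the claim with $\Upsilon_4 = \Upsilon_3 \sqrt{\Theta_a \Theta_b}$ and $\sigma_4 = \sigma_3$. The only subtlety is justifying $H^2 \times H^1$-regularity for $(u,w)$ so that the error variables $\tilde p, \tilde r$ lie in $L^2$ at $t=0$; this is immediate since the observer \eqref{eq:Op1}--\eqref{eq:Op5} is a well-posed copy of the plant with smooth output injection, so if the initial data of the plant lies in the stated Sobolev spaces then the initial observer error satisfies the same regularity. I do not expect a genuine obstacle: the proof is essentially a change-of-variables bookkeeping exercise, and the main care needed is the uniform (in $t$, over the time-varying domain $[0,l(t)] \subseteq [0,L]$) bounds on the Riemann coefficients, which are guaranteed by Assumptions \ref{as:ab}--\ref{as:a0}.
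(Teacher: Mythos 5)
Your proposal is correct and follows essentially the same route as the paper: the paper's proof likewise invokes Lemma \ref{thm:ob} for the Riemann-variable errors and then transfers the estimate to $(\tilde u,\tilde w)$ via the pointwise linear relations \eqref{eq:ewt}--\eqref{eq:eux} and the identification of $\tilde W$ with the boundary errors in \eqref{eq:tXY}. The uniform bounds on $1/\sqrt{d_1},1/\sqrt{d_6}$ and $\sqrt{d_1},\sqrt{d_6}$ that you make explicit are exactly what the paper leaves implicit in calling the inverse transformation ``straightforward.''
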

\begin{proof}
Recalling Lemma \ref{thm:ob} and \eqref{eq:tXY}-\eqref{eq:tr}, the following inequality holds
\begin{align*}
&\quad\bigg(\|\tilde y(\cdot,t)\|^2+\|\tilde v(\cdot,t)\|^2+\|\tilde k(\cdot,t)\|^2+\|\tilde z(\cdot,t)\|^2\notag\\
&+\left|\tilde X(t)\right|^2+\left|\tilde Y(t)\right|^2\bigg)^{\frac{1}{2}}\le \Upsilon_{4a}\bigg(\|\tilde y(\cdot,0)\|^2+\|\tilde v(\cdot,0)\|^2\notag\\
&\quad+\|\tilde k(\cdot,0)\|^2+\|\tilde z(\cdot,0)\|^2+\left|\tilde X(0)\right|^2+\left|\tilde Y(0)\right|^2\bigg)^{\frac{1}{2}}e^{-\sigma_{4a} t},
\end{align*}
for some positive constants $\Upsilon_{4a},\sigma_{4a}$.

According to \eqref{eq:ewt}-\eqref{eq:eux}, of which the inverse transformation where $\tilde u_t(\cdot,t),\tilde u_x(\cdot,t)$, $\tilde w_t(\cdot,t),\tilde w_x(\cdot,t)$ are represented by $\tilde z(\cdot,t),\tilde v(\cdot,t)$, $\tilde k(\cdot,t),\tilde y(\cdot,t)$ is straightforward to obtain, the proof of Theorem \ref{lem:tux} is then completed recalling \eqref{eq:tXY}.
\end{proof}
\section{Output-feedback controller and stability analysis}\label{sec:output}
The output-feedback controller results from combining the state-feedback controller in Section \ref{sec:Con} and the observer in Section \ref{sec:observer}. After inserting observer states into the state-feedback controller \eqref{eq:control1}-\eqref{eq:control2} to replace the unmeasurable states, the controller $U_{1}(t),U_{2}(t)$ in \eqref{eq:o5}-\eqref{eq:o6} are taken as the output-feedback form as $U_{o1}(t),U_{o2}(t)$:
\begin{align}
U_{o1}(t)=&-\frac{1}{d_{19}(l(t))\sqrt{d_6(l(t))}}\bigg[{u_t}(l(t),t)\notag\\
 &- \int_0^{l(t)}\big[F_{11}(l(t),y)({\hat u_t}(y,t) - \sqrt {{d_6}(y)} {\hat u_x}(y,t))\notag\\
 &+F_{12}(l(t),y)({\hat w_t}(y,t) - \sqrt {{d_1}(y)} {\hat w_x}(y,t))\big]dy\notag\\
&-  \int_0^{l(t)}\big[ N_{11}(l(t),y)({\hat u_t}(y,t) + \sqrt {{d_6}(y)} {\hat u_x}(y,t))\notag\\
&+ N_{12}(l(t),y)({\hat w_t}(y,t) + \sqrt {{d_1}(y)} {\hat w_x}(y,t))\big]dy \notag\\
& -  \lambda_{11} (l(t))\hat w(0,t)-\lambda_{12} (l(t))\hat w_t(0,t)\notag\\
&-\lambda_{13} (l(t))\hat u(0,t)-\lambda_{14} (l(t))\hat u_t(0,t)\bigg],\label{eq:control1o}\\
U_{o2}(t)=&-\frac{1}{d_{20}(l(t))\sqrt{d_1(l(t)}}\bigg[{w_t}(l(t),t)\notag\\
 &- \int_0^{l(t)}\big[F_{21}(l(t),y)({\hat u_t}(y,t) - \sqrt {{d_6}(y)} {\hat u_x}(y,t))\notag\\
 &+F_{22}(l(t),y)({\hat w_t}(y,t) - \sqrt {{d_1}(y)} {\hat w_x}(y,t))\big]dy\notag\\
&-  \int_0^{l(t)}\big[ N_{21}(l(t),y)({\hat u_t}(y,t) + \sqrt {{d_6}(y)} {\hat u_x}(y,t))\notag\\
&+ N_{22}(l(t),y)({\hat w_t}(y,t) + \sqrt {{d_1}(y)} {\hat w_x}(y,t))\big]dy \notag\\
& -  \lambda_{21} (l(t))\hat w(0,t)-\lambda_{22} (l(t))\hat w_t(0,t)\notag\\
&-\lambda_{23} (l(t))\hat u(0,t)-\lambda_{24} (l(t))\hat u_t(0,t)\bigg].\label{eq:control2o}
\end{align}
The diagram of the output-feedback closed-loop system is shown in Fig. \ref{fig:closed}. It should be noted that $U_{o1}(t)$, $U_{o2}(t)$ are implemented based
on the boundary measurements ${u_t}(l(t),t)$, ${w_t}(l(t),t)$ mentioned in Section \ref{sec:problem}. To be exact, ${u_t}(l(t),t)$, ${w_t}(l(t),t)$ directly act  as the first terms of \eqref{eq:control1o}-\eqref{eq:control2o}, and also are used to obtain the solutions of the observer \eqref{eq:wt}-\eqref{eq:Op5} which are required in the remain terms in \eqref{eq:control1o}-\eqref{eq:control2o} through a convertor \eqref{eq:convert}.

The following theorem shows the exponential stability result of the output-feedback closed-loop system.
\begin{figure}
\centering
\includegraphics[width=8.8cm]{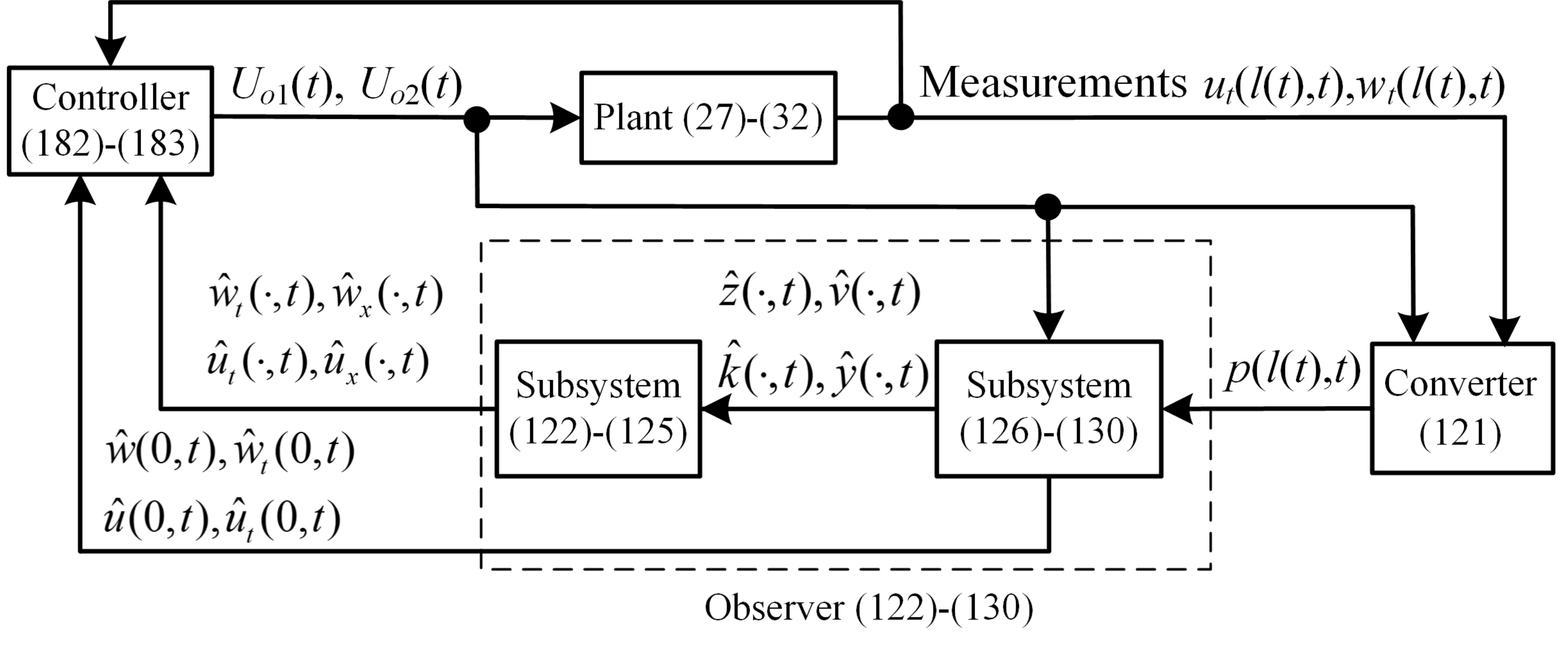}
\caption{Diagram of the output-feedback closed-loop system.}
\label{fig:closed}
\end{figure}
\begin{thm}\label{th:all}
Considering the closed-loop system consisting of the plant \eqref{eq:o1}-\eqref{eq:o6}, the observer \eqref{eq:wt}-\eqref{eq:Op5} and the output-feedback controller \eqref{eq:control1o}-\eqref{eq:control2o}, for initial values $(w(x, 0),w_t(x, 0)) \in H^{2}(0,L)\times H^{1}(0,L)$, $(u(x, 0),u_t(x, 0)) \in H^{2}(0,L)\times H^{1}(0,L)$, one obtains

1) there exist positive constants $\Upsilon_{5}$ and $\sigma_5$ such that
\begin{align}
\left(\Xi(t)+\hat\Xi(t)\right)^{1/2}\le \Upsilon_5\left(\Xi(0)+\hat\Xi(0)\right)^{1/2}e^{-\sigma_5t},\label{eq:norma}
\end{align}
where $\Xi(t)$ is given in \eqref{eq:xinorm} and $\hat\Xi(t)$ is defined as
\begin{align*}
&\hat\Xi(t)=\|\hat u_x(\cdot,t)\|^2+\|\hat u_t(\cdot,t)\|^2+\left|\hat u(0,t)\right|^2+\left|\hat u_t(0,t)\right|^2\notag\\
&+\|\hat w_x(\cdot,t)\|^2+\|\hat w_t(\cdot,t)\|^2+\left|\hat w(0,t)\right|^2+\left|\hat w_t(0,t)\right|^2.
\end{align*}
2) the output-feedback controllers \eqref{eq:control1o}-\eqref{eq:control2o} are bounded and exponentially convergent to zero.
\end{thm}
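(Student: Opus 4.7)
The plan is to treat the output-feedback closed-loop system as a cascade of the observer error subsystem (already shown to be exponentially stable in Lemma \ref{thm:ob} and Theorem \ref{lem:tux}) and the observer-state subsystem driven by the output-feedback law $U_{o1},U_{o2}$. First I would apply the same backstepping transformation \eqref{eq:t1}--\eqref{eq:t2} to the hatted variables $(\hat p,\hat r,\hat W)$ instead of the true $(p,r,W)$, producing $(\hat\alpha,\hat\beta,\hat W)$. Because $U_{o1},U_{o2}$ are built from observer states via \eqref{eq:control1o}--\eqref{eq:control2o}, matching the boundary condition at $x=l(t)$ yields $\hat\beta(l(t),t)=0$, exactly as in the state-feedback target system \eqref{eq:tar5}. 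However, the interior and ODE equations of the observer \eqref{eq:Op1}--\eqref{eq:Op4} carry the output-injection terms $\Gamma_i\tilde p(l(t),t)$, so the hatted target system becomes a perturbed version of \eqref{eq:tar1}--\eqref{eq:tar5}, with additive forcing of the form $\bar\Gamma_i(x,t)\tilde p(l(t),t)$ driven by the observer error. By Lemma \ref{lm:wp} and the boundedness of $\Gamma_1,\Gamma_2,\Gamma_3$ from Assumption \ref{as:a1} and Lemma \ref{lm:wpe}, these forcing kernels are uniformly bounded.

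Second, I would build a composite Lyapunov functional
\begin{align}
V_{\mathrm{out}}(t) = \hat V_1(t) + R_2\, V_e(t),
\end{align}
where $\hat V_1$ is the analogue of \eqref{eq:V1} in $(\hat\alpha,\hat\beta,\hat W)$, $V_e$ is the observer-error functional \eqref{eq:Ve}, and $R_2>0$ is a constant to be chosen large. Differentiating along the perturbed hatted target system and the observer error system \eqref{eq:Op1e}--\eqref{eq:Op5e}, the terms already handled in the proof of Theorem \ref{main} produce $-\eta_1\hat V_1$, while the new observer-error-driven forcing terms are estimated via Young's inequality and absorbed into $R_2\dot V_e \le -R_2\eta_e V_e$, using the trace bound $|\tilde p(l(t),t)|^2\le C(\|\tilde p(\cdot,t)\|^2+\|\tilde p_x(\cdot,t)\|^2)$ in the worst case, or more simply the fact that $\tilde p(l(t),t)$ appears only through boundary values that are dominated, after composition with the inverse of the observer backstepping transformation \eqref{eq:ITp}--\eqref{eq:ITW}, by $\|\tilde\alpha\|,\|\tilde\beta\|,|\tilde S|$. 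Picking $R_2$ large enough yields $\dot V_{\mathrm{out}}\le -\eta_{\mathrm{out}} V_{\mathrm{out}}$ for some $\eta_{\mathrm{out}}>0$, hence exponential decay of $|\hat W(t)|^2+\|\hat\alpha\|^2+\|\hat\beta\|^2+\|\tilde\alpha\|^2+\|\tilde\beta\|^2+|\tilde S|^2$.

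Third, I would unwind the transformations. The direct and inverse backstepping transformations \eqref{eq:t1}--\eqref{eq:t2}, \eqref{eq:It1}--\eqref{eq:It2} (applied to the hatted variables) together with the Riemann change of variables \eqref{eq:zw}--\eqref{eq:yu} and the observer reconstructions \eqref{eq:wt}--\eqref{eq:ux} give equivalence of norms $\hat\Xi(t)\asymp |\hat W(t)|^2+\|\hat p(\cdot,t)\|^2+\|\hat r(\cdot,t)\|^2\asymp |\hat W(t)|^2+\|\hat\alpha\|^2+\|\hat\beta\|^2$, and similarly $\|\tilde p\|^2+\|\tilde r\|^2+|\tilde W|^2$ controls the error portion of $\Xi(t)$ through Theorem \ref{lem:tux}. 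Writing $\Xi(t) \le 2\hat\Xi(t) + 2(\Xi(t)-\hat\Xi\text{-part})$ and using $p=\hat p+\tilde p$, $r=\hat r+\tilde r$, $W=\hat W+\tilde W$, the bound \eqref{eq:norma} follows with $\sigma_5=\tfrac{1}{2}\min\{\eta_{\mathrm{out}},\sigma_4\}$.

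Finally, for part 2), exponential boundedness and decay of $U_{o1}(t),U_{o2}(t)$ is obtained as in Theorem \ref{th:ecU}: the expression \eqref{eq:control1o}--\eqref{eq:control2o} bounds $|U_{oi}(t)|$ by the $H^1$-type norm of the observer states plus the measured boundary $u_t(l(t),t),w_t(l(t),t) = \hat p(l(t),t)+\tilde p(l(t),t)$, and the same argument as \eqref{eq:ul} — together with an $H^2$-variant of the composite Lyapunov estimate above, obtained by differentiating the perturbed hatted target system once in space and time, exactly as in Lemma \ref{lem:uxvx} — delivers exponential decay of the trace terms. The main obstacle I anticipate is the bookkeeping for the $H^2$-level estimate: one must carry out Lemma \ref{lem:uxvx}'s argument on the perturbed hatted target system, showing that the observer-error forcing terms remain controllable after one spatial/temporal differentiation, and this requires smoothness of $\Gamma_1(\cdot,t),\Gamma_2(\cdot,t)$ and hence of the kernels $\bar\varphi,\bar\psi$ in $x$, which follows from Lemma \ref{lm:wpe} together with the regularity of $d_1,d_6\in C^1$ (Assumption \ref{as:a3}).
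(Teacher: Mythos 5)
Your proposal is correct in substance but organizes the cascade the opposite way from the paper. The paper's proof decomposes the \emph{control law}: it writes $[U_{o1},U_{o2}]^T=[U_{sf1},U_{sf2}]^T+\tilde\delta(t)$ with $\tilde\delta(t)$ given explicitly in terms of the observer errors \eqref{eq:delta}, so the true plant under output feedback is the state-feedback closed loop of Theorem \ref{main} perturbed by the exponentially decaying residual $\tilde\delta(t)$ (decaying by Theorem \ref{lem:tux}); the estimate on $\Xi(t)$ comes first and $\hat\Xi(t)$ is then recovered from $\Xi$ and the error norms via \eqref{eq:twt}--\eqref{eq:tXY}. You instead decompose the \emph{state}: you observe that the output-feedback law is an exact state-feedback law for the observer subsystem, so that $\hat\beta(l(t),t)=0$ holds identically (this is correct, since \eqref{eq:Op5} and \eqref{eq:control1o}--\eqref{eq:control2o} both use the measured $p(l(t),t)$, which cancels in the transformed boundary condition exactly as in \eqref{eq:U1}), and the hatted target system is perturbed only by the output-injection terms $\Gamma_i\tilde p(l(t),t)$; you then get $\hat\Xi$ first and recover $\Xi$ from $p=\hat p+\tilde p$, etc. Both are legitimate separation-principle arguments; yours is the more standard observer-based backstepping route and is arguably more self-contained, since the paper's one-line appeal to Theorem \ref{main} silently requires redoing that theorem's Lyapunov analysis with a nonzero boundary perturbation $\beta(l(t),t)=R(l(t))\tilde\delta(t)$, whereas your composite functional $\hat V_1+R_2V_e$ makes the absorption explicit. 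The paper's route buys a shorter writeup and reuses Theorems \ref{main}--\ref{lem:tux} as black boxes.

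One point in your second step needs tightening: the forcing $\tilde p(l(t),t)$ equals the boundary trace $\tilde\alpha(l(t),t)$ (the integral in \eqref{eq:Tp} vanishes at $x=l(t)$), and a pointwise trace is \emph{not} dominated by the $L^2$ quantities $\|\tilde\alpha\|,\|\tilde\beta\|,|\tilde S|$ as your ``more simply'' clause suggests. The correct mechanism is the one already used in \eqref{eq:dV12}: the derivative of $V_e$ produces a negative boundary term proportional to $-e^{-\check\delta_1 l(t)}\tilde\alpha(l(t),t)^T\bigl(I-Q(l(t))^{-1}\dot l(t)\bigr)\tilde\alpha(l(t),t)$, which is strictly negative definite by Assumption \ref{as:a1} and, after choosing $R_2$ large, absorbs the Young-inequality cross terms generated by the $\Gamma_i\tilde p(l(t),t)$ forcing. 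With that substitution (or your $H^1$ trace bound as a fallback), the argument closes.
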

\begin{proof}
The output-feedback controller \eqref{eq:control1o}-\eqref{eq:control2o} can be written as
\begin{align}
[U_{o1}(t),U_{o2}(t)]^T=&[U_{sf1}(t),U_{sf2}(t)]^T+\tilde\delta(t)\label{eq:U12o}
\end{align}
considering \eqref{eq:twt}-\eqref{eq:tXY}, where $U_{sf1}(t),U_{sf2}(t)$ are the state-feedback form presented as \eqref{eq:control1}-\eqref{eq:control2}, and $\tilde\delta(t)\in R^{2\times 1}$ is
\begin{align}
\tilde\delta(t)=&2R(l(t))^{-1}\bigg[ \int_0^{l(t)}F(l(t),y)[{\tilde  u_t}(y,t) - \sqrt {{d_6}(y)} {\tilde  u_x}(y,t),\notag\\
&{\tilde  w_t}(y,t) - \sqrt {{d_1}(y)} {\tilde w_x}(y,t)]^Tdy\notag\\
&+  \int_0^{l(t)} N(l(t),y)[{\tilde u_t}(y,t) + \sqrt {{d_6}(y)} {\tilde u_x}(y,t),\notag\\
&{\tilde w_t}(y,t) + \sqrt {{d_1}(y)} {\tilde w_x}(y,t)]dy \notag\\
& +  \lambda (l(t))[\tilde  w(0,t),\tilde  w_t(0,t),\tilde  u(0,t),\tilde  u_t(0,t)]^T\bigg].\label{eq:delta}
\end{align}
Applying the output-feedback controller \eqref{eq:U12o} into the plant \eqref{eq:o1}-\eqref{eq:o6}, i.e., $U_1(t)=U_{1o}(t),~ U_2(t)=U_{2o}(t)$, recalling Theorem \ref{main} and Theorem \ref{lem:tux}, together with \eqref{eq:twt}-\eqref{eq:tXY}, we achieve \eqref{eq:norma}, i.e, property 1) in Theorem \ref{th:all}. Moreover, applying Theorem \ref{th:ecU} which shows the state-feedback controllers $[U_{sf1}(t),U_{sf2}(t)]^T$ are exponentially convergent to zero and Theorem \ref{lem:tux} which guarantees the exponential convergence to zero of $\tilde\delta(t)$ \eqref{eq:delta}, boundedness and exponential convergence of the output feedback control input are obtained according to \eqref{eq:U12o},  i.e., the property 2) of Theorem \ref{th:all}. Therefore, the proof of Theorem \ref{th:all} is completed.
\end{proof}
\section{Simulation test on vibration suppression of DCV}\label{sec:Sim}
The simulation is conducted based on the linear model \eqref{eq:l1}-\eqref{eq:l6} and actual nonlinear model \eqref{eq:n1}-\eqref{eq:n6} with unmodeled disturbances, respectively, where the first one is to verify the above theoretical results and the second one is to illustrate the effectiveness in the application of vibration control of DCV.
Note that the time-varying domain plant with pre-determined time-varying functions $l(t)$ and $\dot l(t)$ shown in Fig. \ref{fig:figl},  is converted to the one on the
fixed domain $\iota=[0,1]$ with time-varying coefficients related to $l(t),\dot l(t), \ddot l(t)$  via introducing
\begin{align}
\iota=\frac{x}{l(t)}, \label{eq:iota}
\end{align}
i.e., representing $u(x,t)$ by $u(\iota,t)$ as
 \begin{align}
 u_{x}(x,t)&=\frac{1}{l(t)}{u_{\iota}(\iota,t)},~u_{xx}(x,t)=\frac{1}{l(t)^2}{u_{\iota\iota}(\iota,t)},\\
 u_{t}(x,t)&=u_t(\iota,t)-\frac{\dot l(t)\iota}{l(t)}{u_{\iota}(\iota,t)},\\
 u_{tt}(x,t)&={u_{tt}(\iota,t)}-\frac{2\dot l(t)\iota}{l(t)}{u_{\iota t}(\iota,t)}-\frac{\dot l(t)^2\iota^2}{l(t)^2}{u_{\iota\iota}(\iota,t)}\notag\\
 &\quad-\frac{(l(t)\ddot l(t)-2\ddot l(t)^2)\iota}{l(t)^2}{u_{\iota}(\iota,t)},
 \end{align}
and then the simulation is conducted based
on the finite difference method with time step and space step
as 0.001 and 0.05 respectively. The observer \eqref{eq:wt}-\eqref{eq:Op5} is solved in the same way, and the following equations are used to obtain $\hat u,\hat w$ from $\hat k,\hat y,\hat z,\hat v$
\begin{align*}
\hat u(\iota,t)=\int_0^{\iota}\frac{1}{2\sqrt {d_6(\bar\iota)}}(\hat k(\bar\iota,t)-\hat y(\bar\iota,t))d{\bar\iota}+\bar C_1\hat W(t),\\
\hat w(\iota,t)=\int_0^{\iota}\frac{1}{2\sqrt {d_1(\bar\iota)}}(\hat z(\bar\iota,t)-\hat v(\bar\iota,t))d{\bar\iota}+\bar C_2\hat W(t)
\end{align*}
according to \eqref{eq:wt}-\eqref{eq:ux} and \eqref{eq:tXY}, where $\bar C_1=[0,0,1,0]$ and $\bar C_2=[1,0,0,0]$.

The initial conditions are defined according to the steady state, as $u_x(\cdot,0)=\bar\varepsilon(\cdot)$, $u_t(\cdot,0)=0$ and $w_x(\cdot,0)=-\bar\phi(\cdot)$, $w_t(\cdot,0)=0$. With defining $u(0,0)=0$ and $w(l(0),0)=0$, the initial conditions of \eqref{eq:l1}-\eqref{eq:l6} can thus be defined completely in the numerical calculation adopting the finite difference method. All initial conditions $\hat k(\cdot,0),\hat y(\cdot,0),\hat z(\cdot,0),\hat v(\cdot,0),\hat W(0)$ of the observer \eqref{eq:wt}-\eqref{eq:Op5} are set as zero.
\subsection{Linear model}\label{sec:simlin}
\subsubsection{System coefficients}
Matching \eqref{eq:o1}-\eqref{eq:o6} with \eqref{eq:l1}-\eqref{eq:l6}, we have the specific expressions of the coefficients in \eqref{eq:o1}-\eqref{eq:o6} as
\begin{align}
&d_1(x)=\frac{\frac{3}{2}EA_a\bar\phi(x)^2+T(x)}{m_c},\label{eq:q0}\\
&d_2(x)=\frac{EA_a\bar\varepsilon'(x)+\rho g}{m_c},~d_3=\frac{-EA_a\bar\phi'(x)}{m_c},\label{eq:q1}\\
&d_{4}=\frac{-c_v}{m_c},~d_{5}=0,~d_6=\frac{EA_a}{m_c},~d_7(x)=\frac{-EA_a\bar\phi'(x)}{m_c},\\
&d_{8}=d_{9}=0,~d_{10}=\frac{-c_u}{m_c},~d_{11}=\frac{-c_w}{M_L},~d_{12}=\frac{-EA_a\bar\phi(0)^2}{2M_L},\\
&d_{13}=0,~d_{14}=\frac{-EA_a\bar\phi(0)}{M_L},~d_{15}=\frac{-c_h}{M_L},d_{16}=\frac{-EA_a}{M_L},\\
&d_{17}=0,~d_{18}=\frac{EA_a\bar\phi(0)}{2M_L},~d_{19}=\frac{1}{EA_a},\\
&d_{20}(l(t))=\frac{1}{EA_a\bar\varepsilon(l(t))+ \frac{EA_a}{2}\bar\phi(l(t))^2 + T(l(t))},\label{eq:q20}
\end{align}
where $T(x)$, $\bar\varepsilon(x)$ and $\bar\phi(x)$ are given in \eqref{eq:statictension}, \eqref{eq:steady1}-\eqref{eq:steady2}, and the values of the physical parameters are shown in Table \ref{table1}. $x$ in \eqref{eq:q0}-\eqref{eq:q20} can be represented by $\iota$  via \eqref{eq:iota}.
\begin{figure}
\centering
\includegraphics[width=8cm]{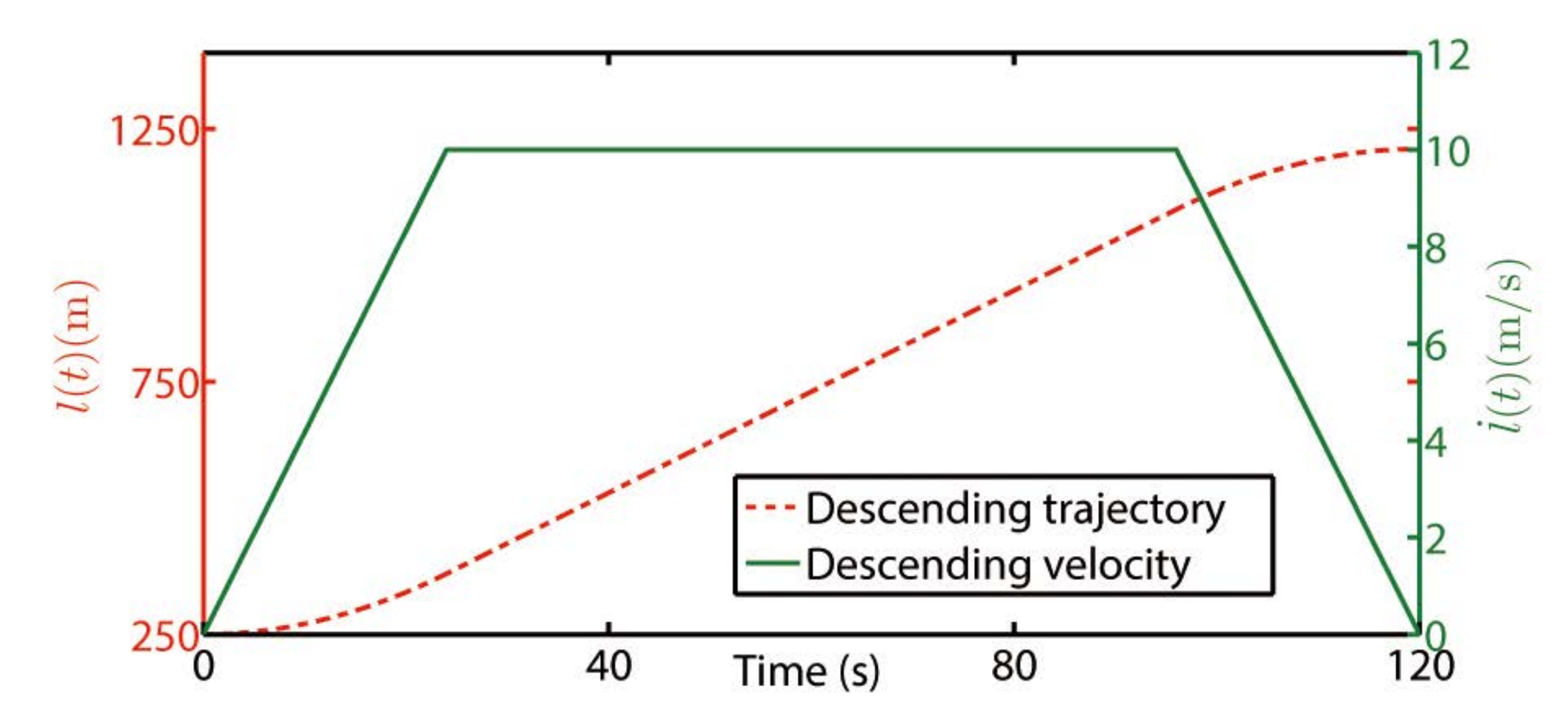}
\caption{Descending trajectory and velocity, i.e., the time-varying cable length $l(t)$ and the changing rate $\dot l(t)$.}
\label{fig:figl}
\end{figure}
\subsubsection{Controller parameters}
Apply the proposed controllers \eqref{eq:control1o}-\eqref{eq:control2o} into \eqref{eq:l1}-\eqref{eq:l6}, where the approximate solution of the kernel equations \eqref{eq:Kerc1}-\eqref{eq:Kerc6} is also solved by the finite
difference method on a fixed triangular domain ${\mathcal D_0}=\{0\le y\le x\le L\}$, and then extract $F(l(t),y),N(l(t),y)$ which would be used in the controller. The control parameters $\kappa$ are chosen as
\begin{align}
\left[ {\begin{array}{*{20}{c}}
{{\kappa _{11}}}&{{\kappa _{12}}}&{{\kappa _{13}}}&{{\kappa _{14}}}\\
{{\kappa _{21}}}&{{\kappa _{22}}}&{{\kappa _{23}}}&{{\kappa _{24}}}
\end{array}} \right]
=\left[ {\begin{array}{*{20}{c}}
{0.8}&1.2&4.5&6\\
2.5&3&1.5&2
\end{array}} \right]\times 10^3\label{eq:kappa11}
\end{align}
which determines the kernel $\lambda(x)$ used in the controllers. A same process is used to obtain $\bar\varphi(x,l(t)),\bar\psi(x,l(t))$ used in the observer gains \eqref{eq:Ga1}-\eqref{eq:Ga2} and all elements in $L_0$ are defined as 1.
\begin{figure}
\centering
\includegraphics[width=8cm]{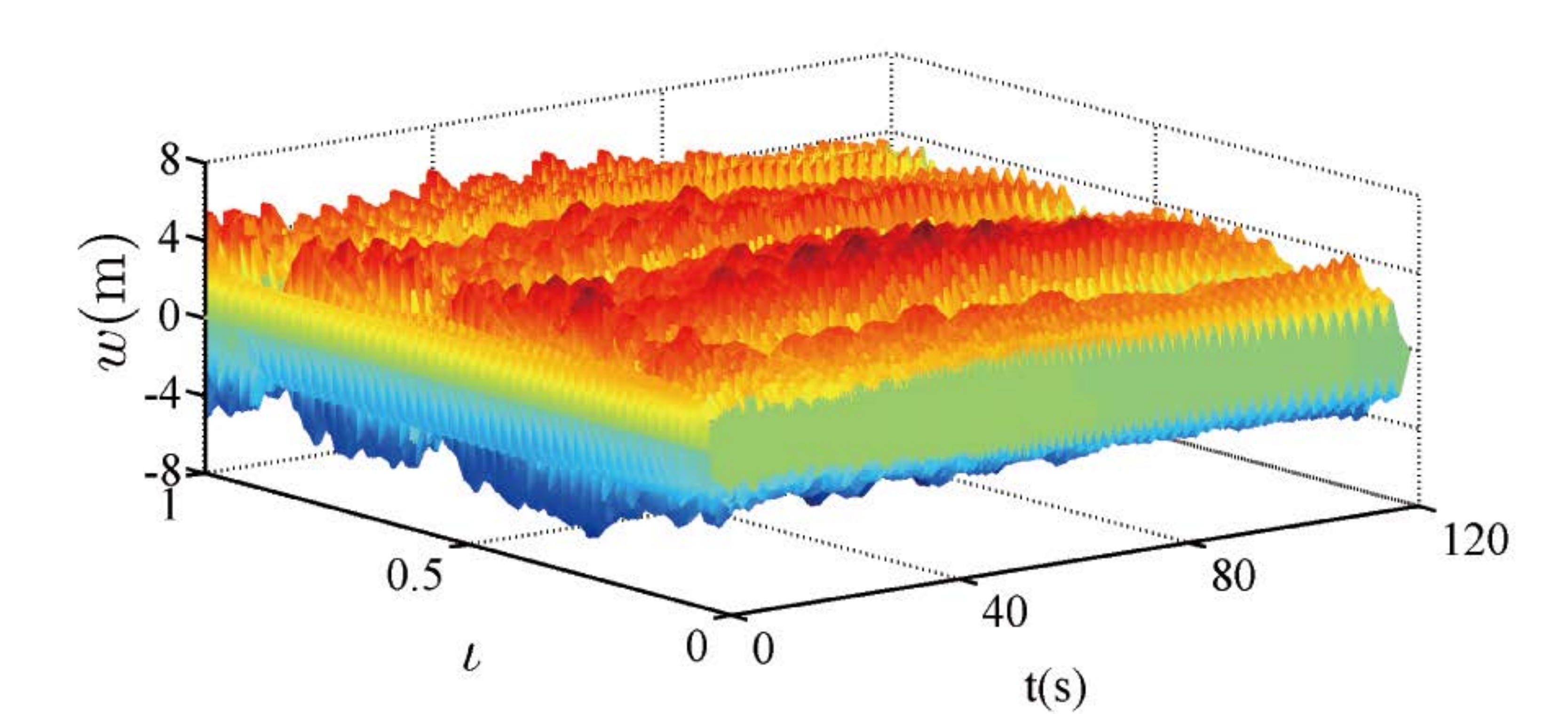}
\caption{Responses of lateral vibrations $w(x,t)$ without control.}
\label{fig:wo}
\end{figure}
\subsubsection{Simulation results}
\begin{figure}
\centering
\includegraphics[width=8cm]{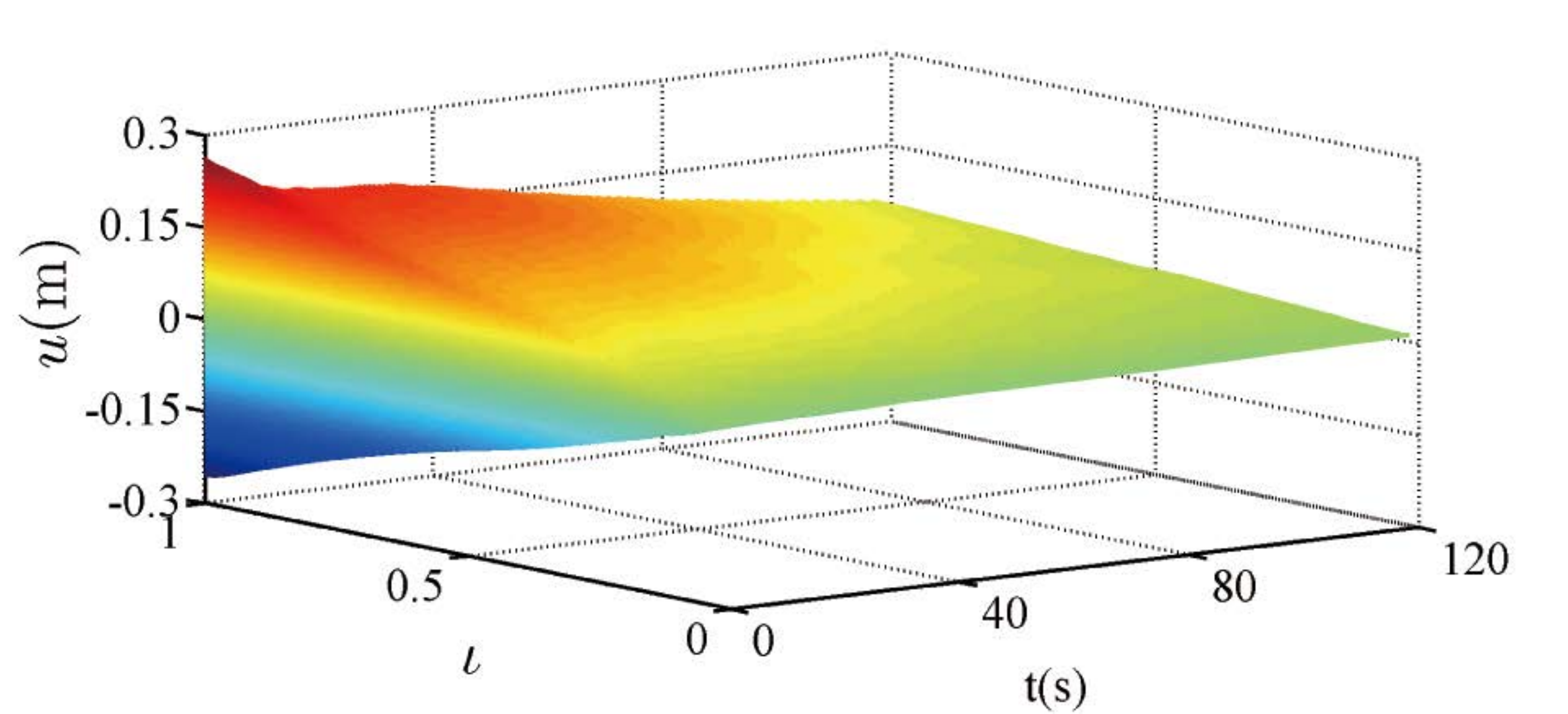}
\caption{Responses of longitudinal vibrations $u(x,t)$ without control.}
\label{fig:uo}
\end{figure}
\begin{figure}
\centering
\includegraphics[width=8cm]{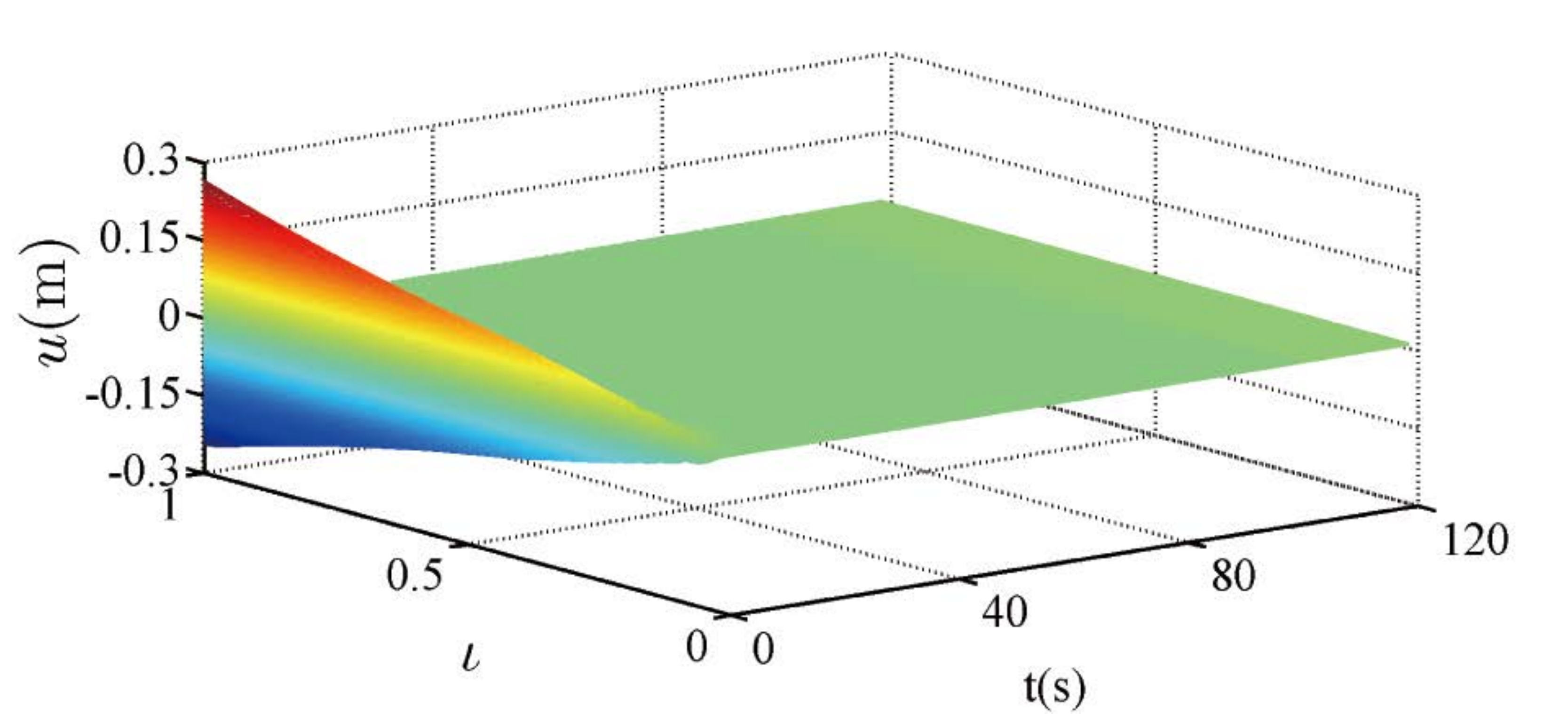}
\caption{Closed-loop responses of longitudinal vibrations $u(x,t)$.}
\label{fig:u}
\end{figure}
\begin{figure}
\centering
\includegraphics[width=8cm]{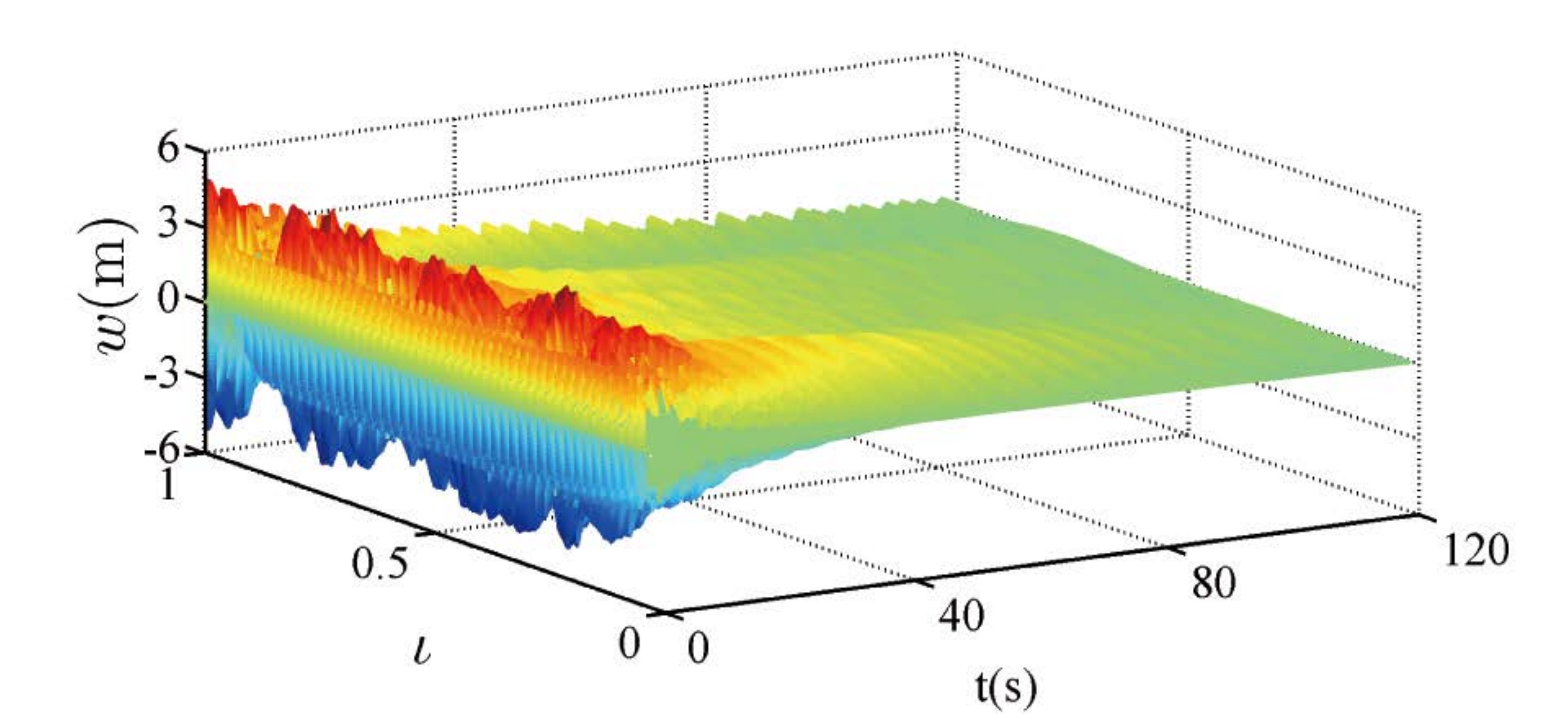}
\caption{Closed-loop responses of lateral vibrations $w(x,t)$.}
\label{fig:w}
\end{figure}
\begin{figure}
\centering
\includegraphics[width=8cm]{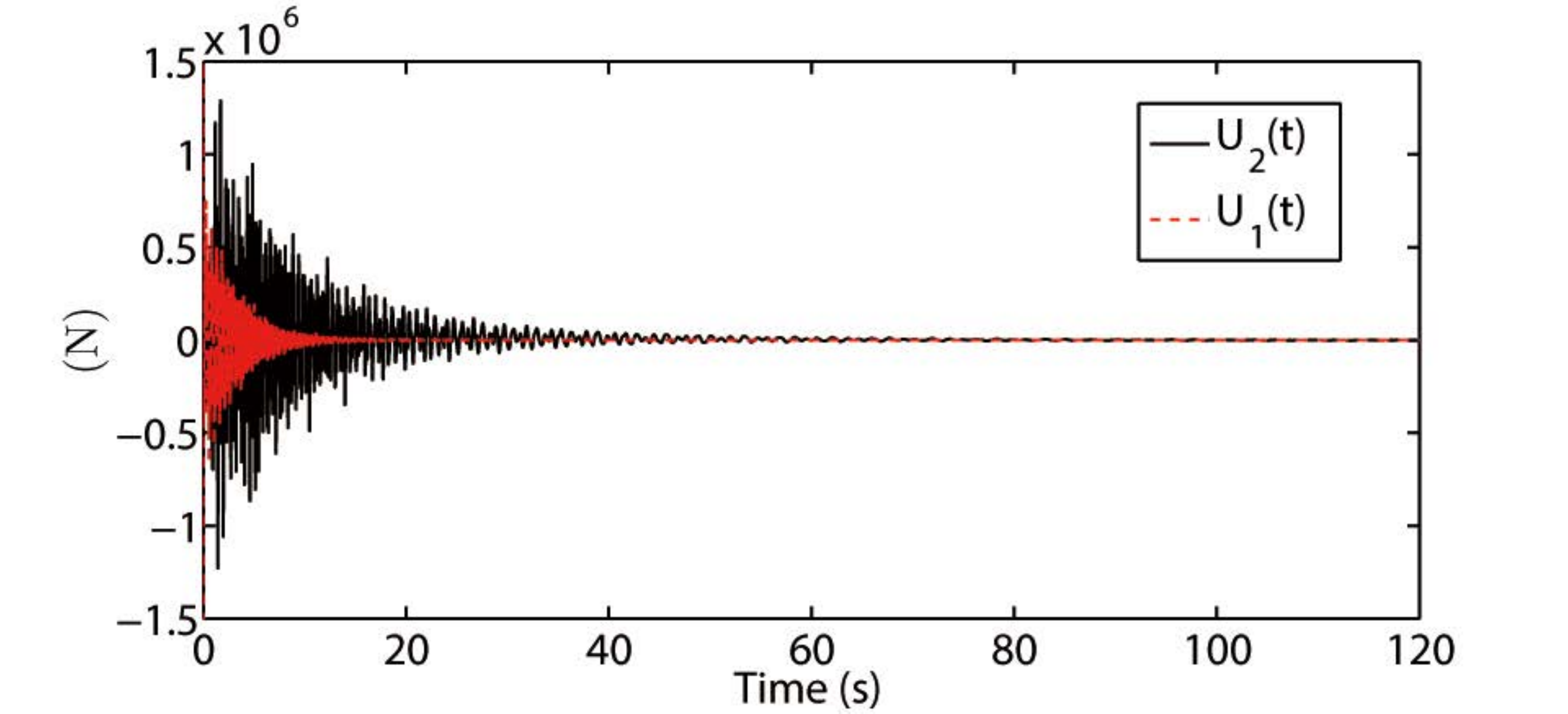}
\caption{Control forces $U_1(t)$ and $U_2(t)$.}
\label{fig:uf}
\end{figure}
\begin{figure}
\centering
\includegraphics[width=8cm]{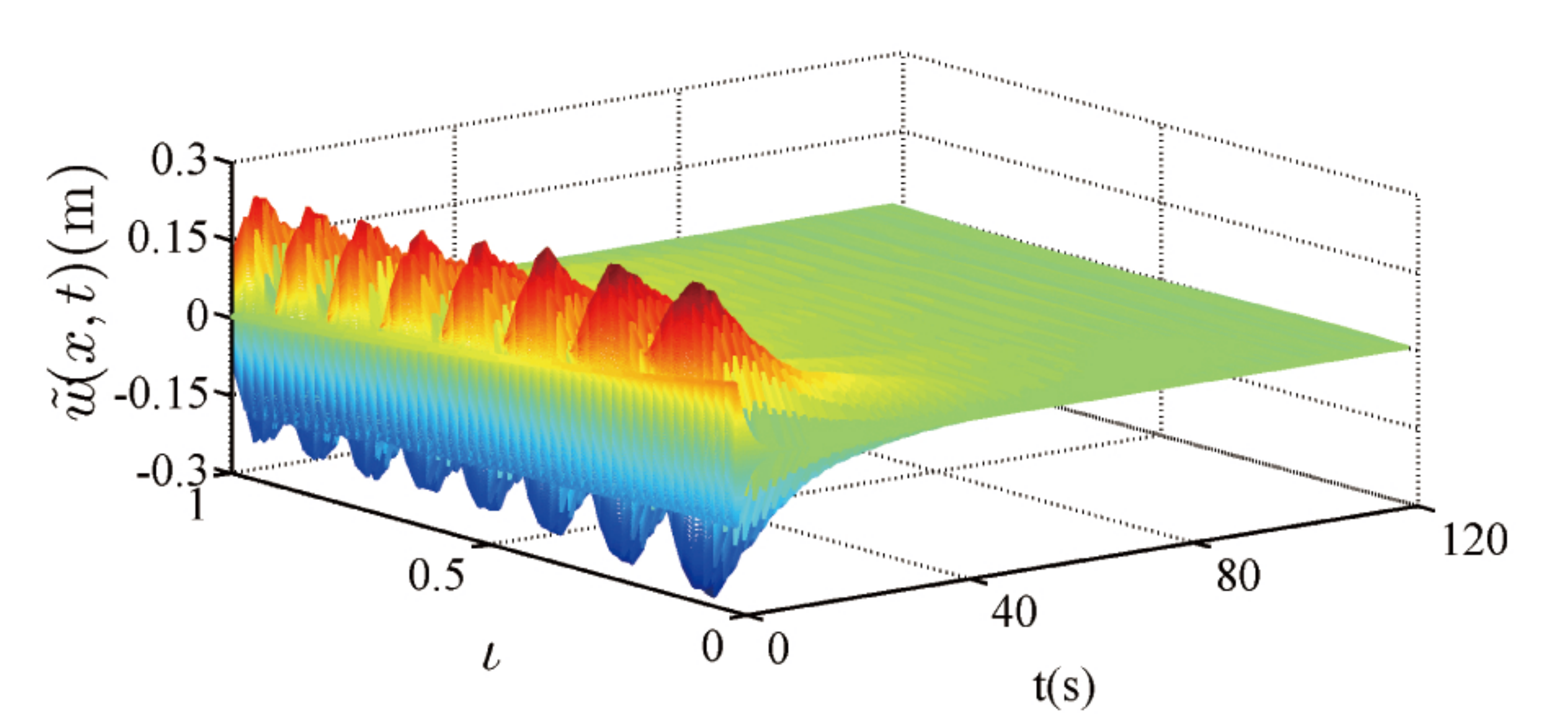}
\caption{Observer error of lateral vibrations $\tilde w(x,t)$.}
\label{fig:tw}
\end{figure}
\begin{figure}
\centering
\includegraphics[width=8cm]{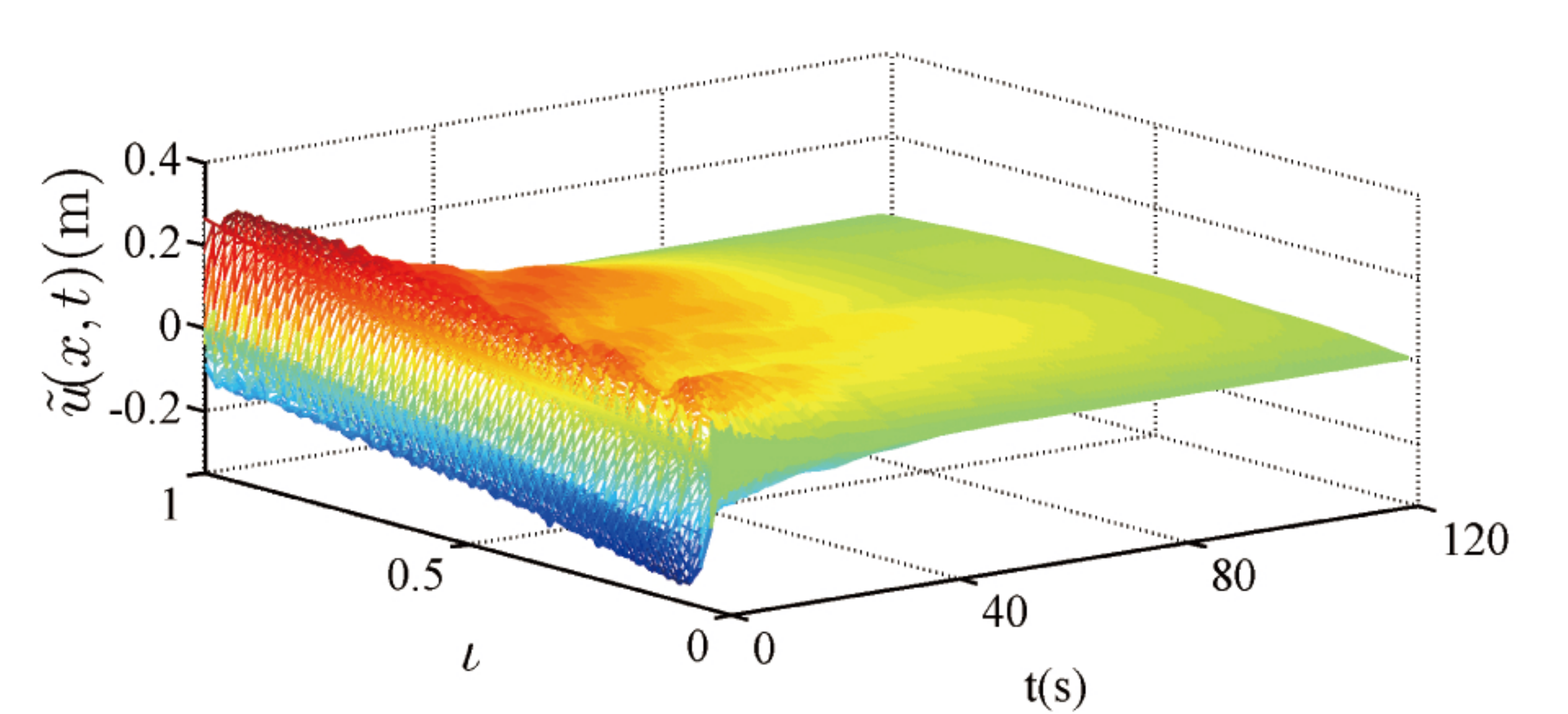}
\caption{Observer error of longitudinal vibrations $\tilde u(x,t)$.}
\label{fig:tu}
\end{figure}
It can be seen from Fig. \ref{fig:wo} that the large lateral vibrations whose oscillation range is up to $10$ m,  persist in the whole operation time $120$s in the case of without control. Even though the longitudinal vibrations in Fig. \ref{fig:uo} are decaying because of the material damping coefficient $c_u$ of cable in Tab. \ref{table1}, the longitudinal vibration at the top of the cable ($\iota=1$ in Fig. \ref{fig:wo}) is excessive and reduced slowly  because this point bears the whole mass of the cable and payload resulting in large elastic deflections. Applying the proposed output-feedback two-directional vibration control forces at the ship-mounted crane, it is shown in Fig. \ref{fig:u} that the longitudinal vibrations are suppressed very fast, and the lateral vibrations also decay with a satisfied decay rate according to Fig. \ref{fig:w}. The output-feedback control forces at the ship-mounted crane are shown in Fig. \ref{fig:uf}, where the states of the proposed observer are used. The performance of the observer on tracking the actual states can be seen in Figs. \ref{fig:tw}-\ref{fig:tu}, which show the observer errors of both lateral and longitudinal vibrations are convergent to zero.
\subsection{Actual nonlinear model with ocean current disturbances}\label{subsecnonlinear}
The nonlinear model \eqref{eq:n1}-\eqref{eq:n6} in the simulation is generated via replacing $\bar\varepsilon(\cdot),\bar\phi(\cdot)$ by $u_x(\cdot,t)$ and $-w_x(\cdot,t)$ in the linear model \eqref{eq:l1}-\eqref{eq:l6} respectively. Note that the time step and space step are changed to $0.0005$ and $0.1$ respectively in the finite difference method to ensure the existence of the numerical solution.

In practice, ocean current disturbances would act as external lateral oscillating drag forces $f(\iota,t)$ on the cable. In the simulation, $f(\iota,t)$ which is added in \eqref{eq:n2} and \eqref{eq:n4} converted to a fixed domain as above are defined as follows.
Consider the time-varying ocean surface current velocity $P(t)$ modeled by a first-order Gauss-Markov process \cite{Fossen2002}:
\begin{align}
\dot P(t)+\mu P(t)=\mathcal G(t),~~ P_{\min}\le P(t)\le P_{\max},
\end{align}
where $\mathcal G(t)$ is Gaussian white noise. Constants $P_{\min},P_{\max}$ and $\mu$ are chosen as $1.6 ms^{-1}$, $2.4 ms^{-1}$ and 0 \cite{How2011}.
$f(\iota,t)$ can then be given as  \cite{How2011}:
\begin{align}
f(\iota,t)=&\left(0.9\iota+0.1\right)\frac{1}{2}\rho_{s}C_dP(t)^2R_DA_D\notag\\
&\times\cos\left(4\pi\frac{S_tP(t)}{R_D}t+\varsigma\right)\label{eq:fxt}
\end{align}
where $0.9\iota+0.1$ means the full disturbance load is applied at the top of the cable, i.e., the ocean surface, and linearly declines to its $0.1$ at
the bottom of the cable, i.e., the payload. $C_d=1$ denotes the drag coefficient and $\varsigma=\pi$ is the phase angle. $A_D=400$ denotes the amplitude of the oscillating drag force, $S_t=0.2$ being the Strouhal number \cite{Faltinsen1993}.
The ocean disturbances $f$ used in the simulation are shown in Fig. \ref{fig:fn}.
\begin{figure}
\centering
\includegraphics[width=8cm]{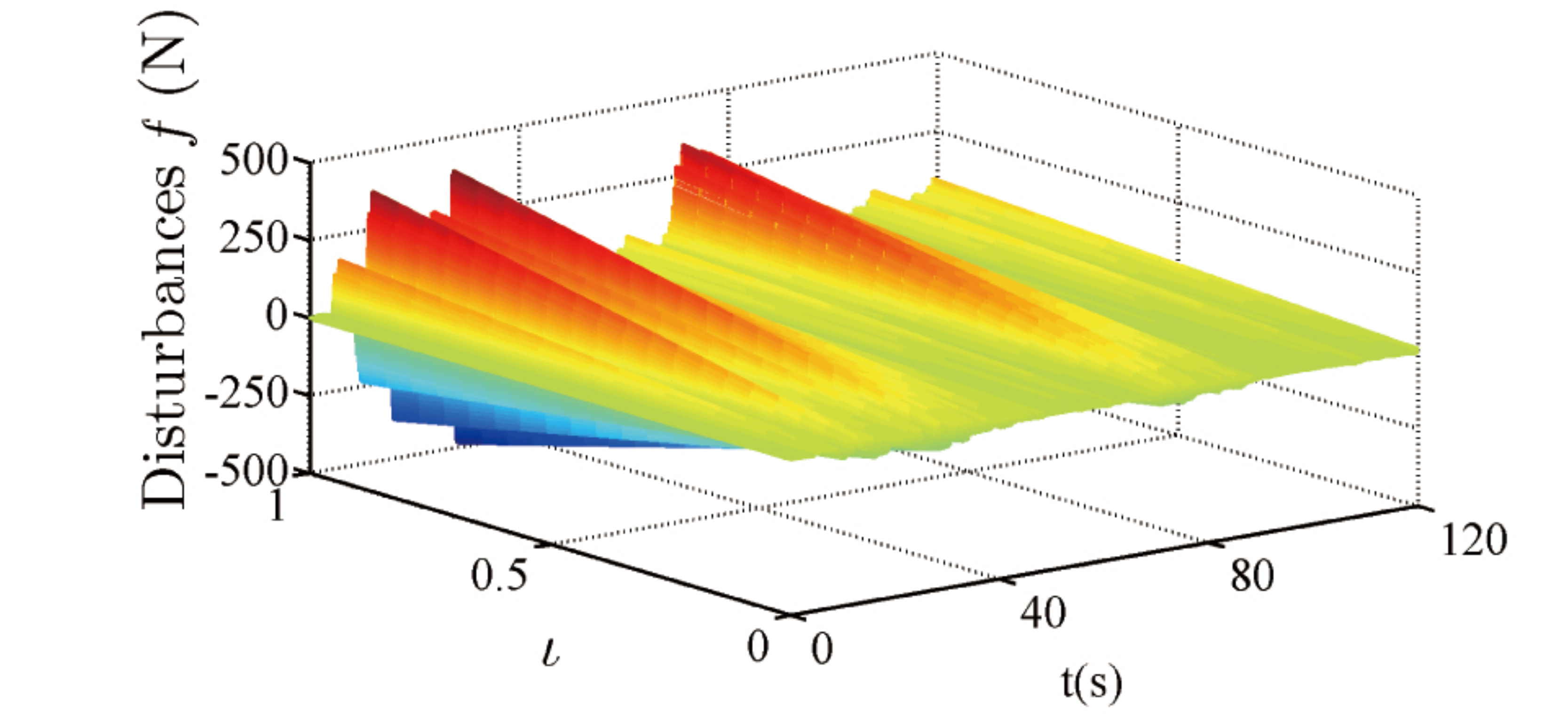}
\caption{Lateral oscillations drug forces from ocean current disturbances.}
\label{fig:fn}
\end{figure}

\begin{figure}
\centering
\includegraphics[width=8cm]{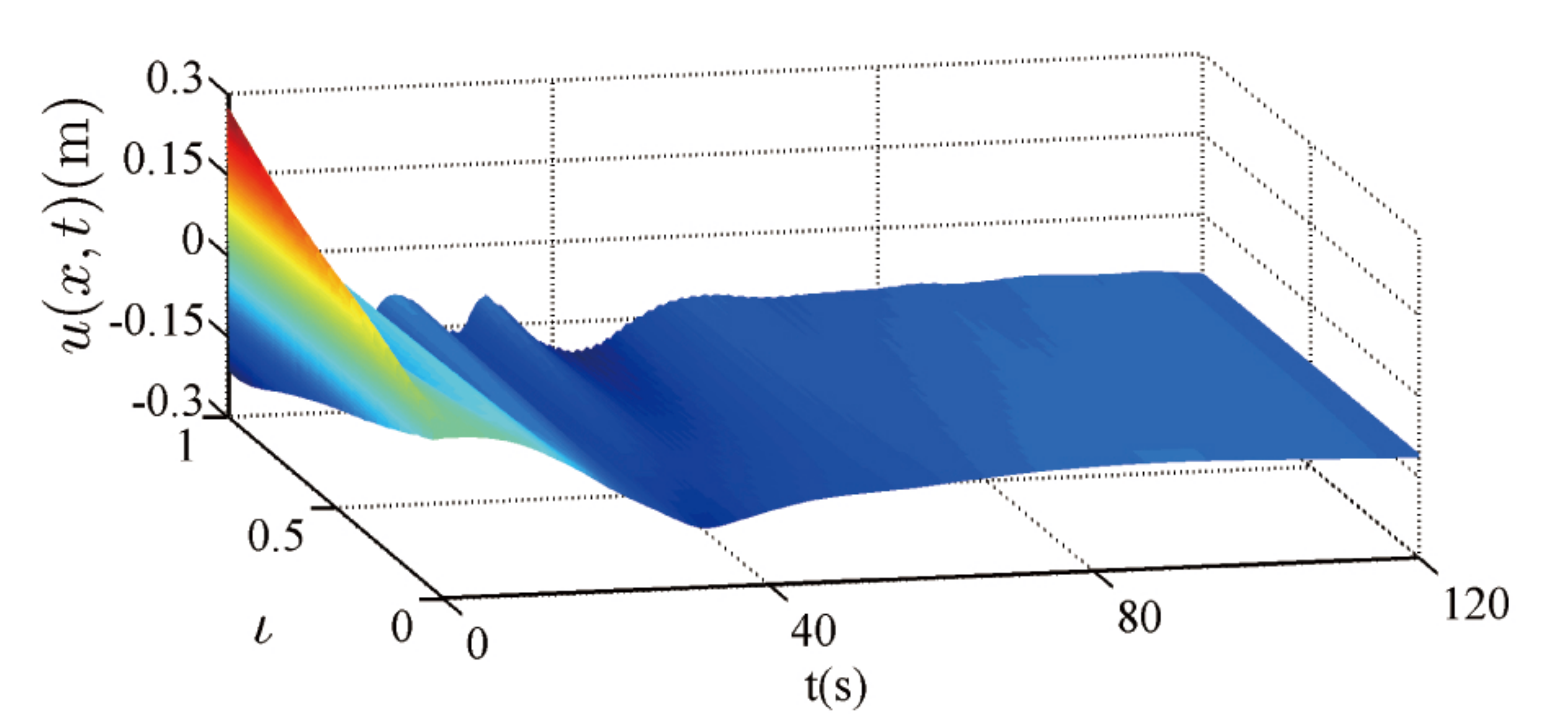}
\caption{Closed-loop responses of longitudinal vibrations $u(x,t)$ in the actual nonlinear model with unmodeled disturbances.}
\label{fig:un}
\end{figure}
\begin{figure}
\centering
\includegraphics[width=8cm]{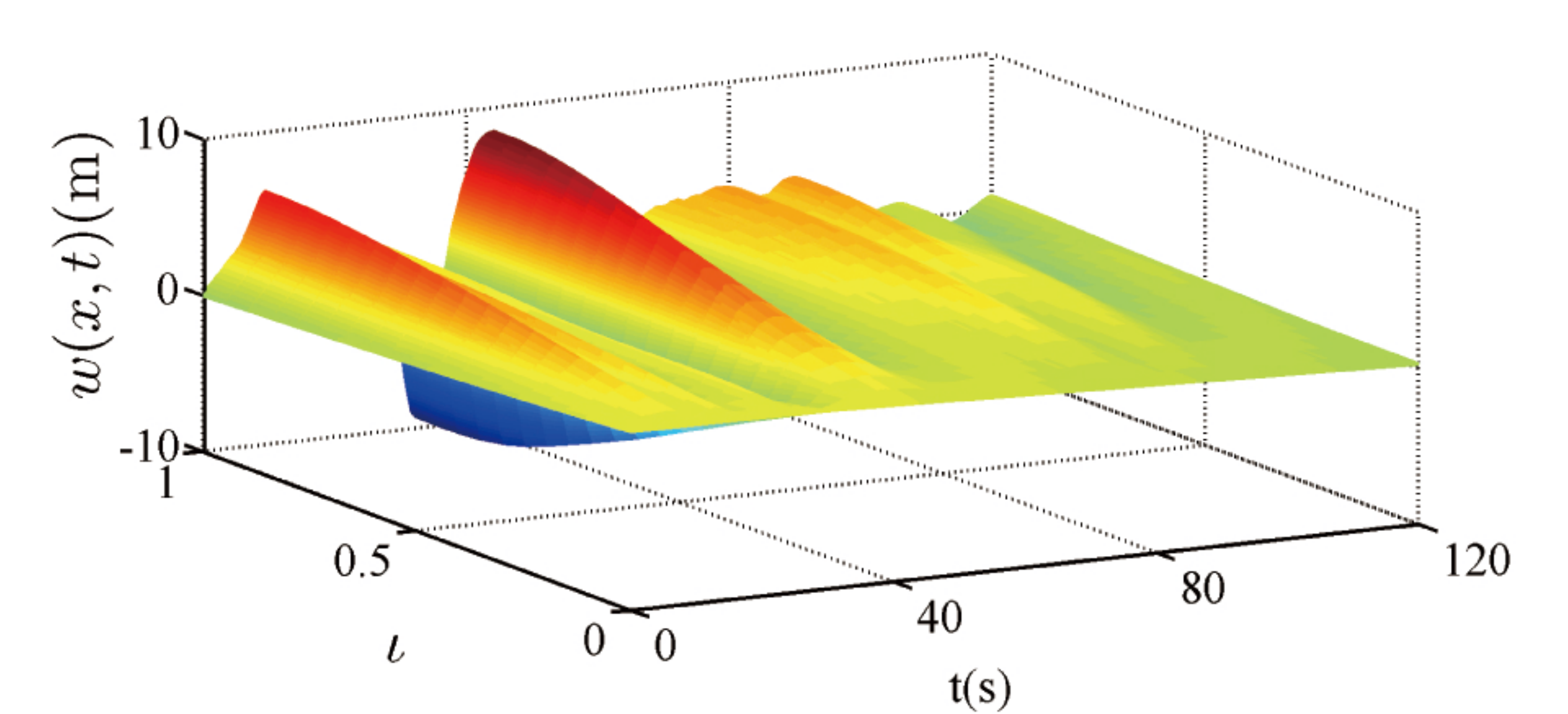}
\caption{Closed-loop responses of lateral vibrations $w(x,t)$ in the actual nonlinear model with unmodeled disturbances.}
\label{fig:wn}
\end{figure}
\begin{figure}
\centering
\includegraphics[width=8cm]{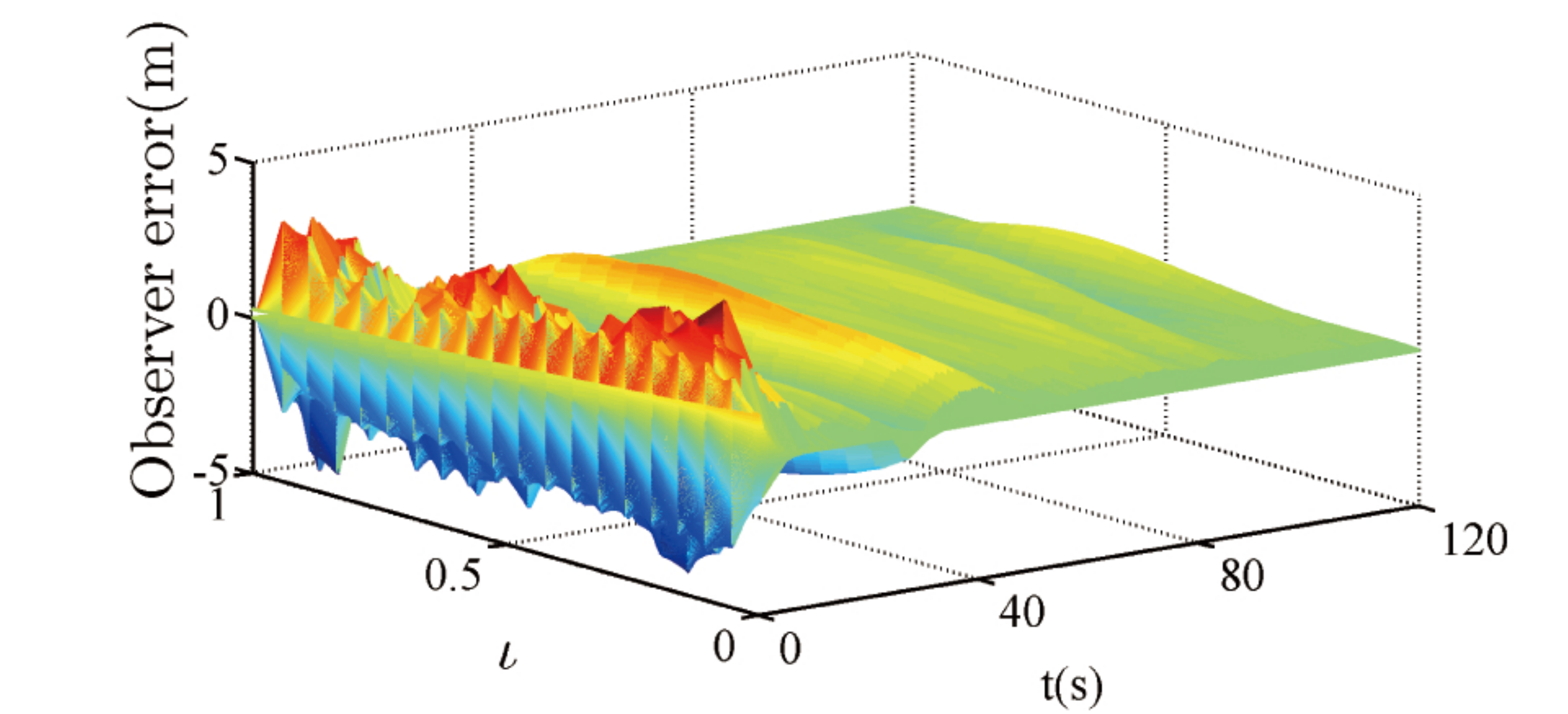}
\caption{Observer errors $\tilde u(\cdot,t)+\tilde w(\cdot,t)$ in the actual nonlinear model with unmodeled disturbances.}
\label{fig:obn}
\end{figure}
\begin{figure}
\centering
\includegraphics[width=8cm]{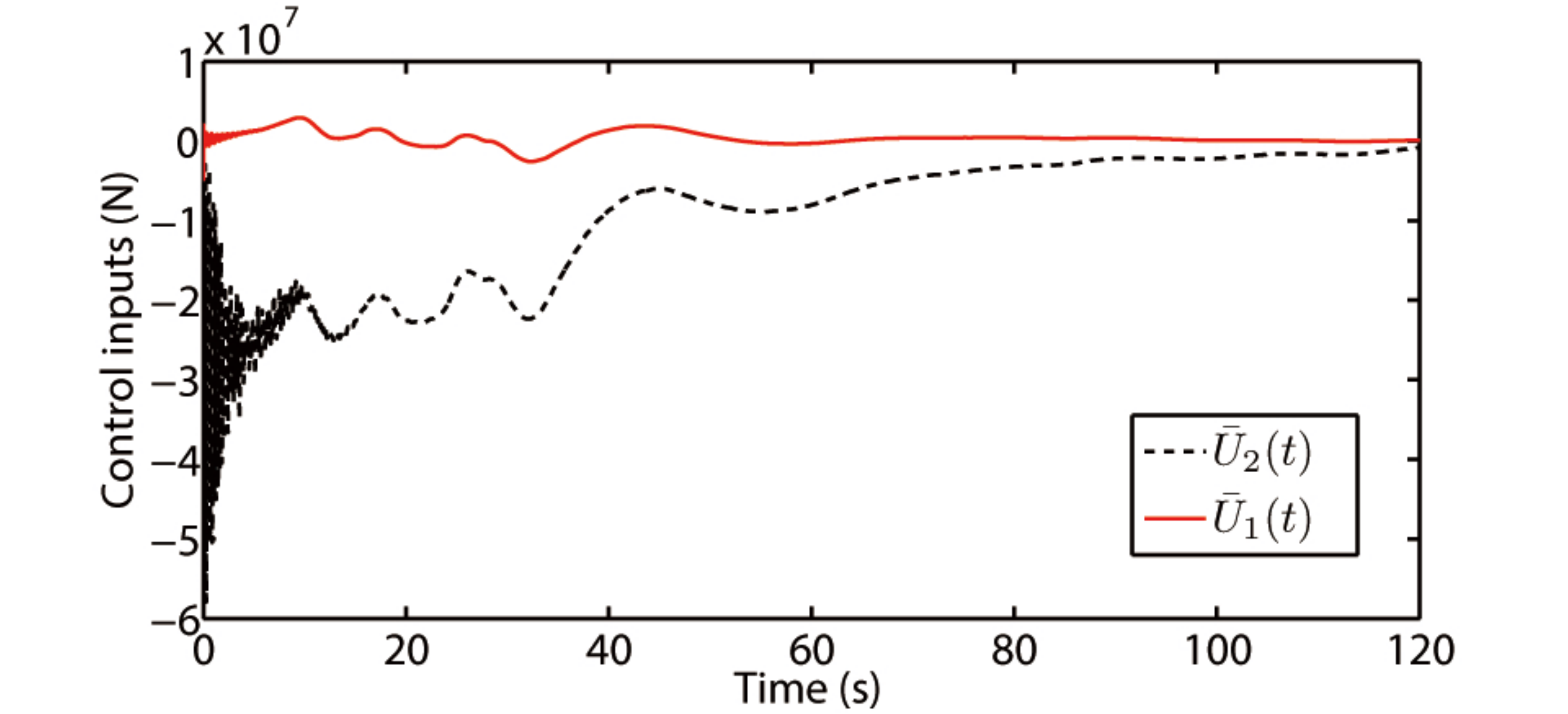}
\caption{Control forces in the actual nonlinear model.}
\label{fig:cn}
\end{figure}
The control parameters $\kappa_{11}$, $\kappa_{12}$, $\kappa_{13}$, $\kappa_{14}$ and $\kappa_{21}$, $\kappa_{22}$, $\kappa_{23}$, $\kappa_{24}$ are increased to twice and ten times of those in \eqref{eq:kappa11} respectively, considering the robust to the unmodeled disturbances. The observer parameters are kept same with those in Section \ref{sec:simlin}. Apply the proposed output-feedback controller into the actual nonlinear model with the ocean current disturbances, it is shown in Figs. \ref{fig:un}-\ref{fig:wn} that the longitudinal vibrations and lateral vibrations are reduced as time goes on. From Fig. \ref{fig:obn}, we know that the observer errors  are convergent to a small range around zero. Simulation results in Section \ref{subsecnonlinear} illustrate the effectiveness of the proposed control design applied into vibration suppression of DCV, where the output-feedback control inputs in this actual model are shown in Fig. \ref{fig:cn}.
\section{Conclusion and future work}\label{sec:Conclusion}
This work is motivated by a practical application of longitudinal-lateral vibration suppression for a deep-sea construction vessel which is used to install oil drilling equipment at the
designated locations on the seafloor. The vibration dynamics of the DCV  is derived by extended Hamilton's principle resulting in a nonlinear wave PDE model, which is then linearized around the steady state to generate a linear model used in control design. The obtained specific model of the DCV is represented as a more general coupled wave PDE plant. Through Riemann transformations, the plant is converted to a $4\times4$ coupled heterodirectional hyperbolic PDE-ODE system characterized by spatially-varying coefficients and
on a time-varying domain, based on which the controller and observer designs are conducted via the backstepping method. The exponential stability results of the observer error system and the output-feedback closed-loop system, boundedness and exponential convergence of the control inputs are proved by Lyapunov analysis. The simulation test is conducted based on the both approximated linear model and actual nonlinear model to support the obtained theoretical results and verify that the proposed ship-mounted crane control forces can effectively suppress the undesired longitudinal-lateral vibrations in DCV.

The control design is on the basis of a completely known model, but some uncertainties, such as unknown model parameters, may appear in practice. In future work, the model uncertainties will be considered and some adaptive control technologies should be incorporated into the control design.
\section*{Appendix}
\textbf{\emph{Proof of Lemma \ref{lm:wp}:}}
The boundary conditions in \eqref{eq:Kerc1}-\eqref{eq:Kerc6} are along the lines $y=x$ and $y=0$ and there are not conditions at the boundary $x=l(t)$ on the triangular domain ${\mathcal D}$. Therefore, we can extend the boundary $x=l(t)$ in $\mathcal D$ to $x=L$ considering $0<l(t)\le L$ in Assumption \ref{as:a0}, to solve $F,N$ on a fixed triangular domain ${\mathcal D_0}=\{0< y< x<L\}$ and $\lambda$ on a constant interval $0\le x\le L$ in \eqref{eq:Kerc1}-\eqref{eq:Kerc6}. Note that the lines $y=x$ and $y=0$ are overlapped boundaries of the triangular domains ${\mathcal D_0}$ and ${\mathcal D}$, and ${\mathcal D}\subseteq\mathcal D_0$. Once the solutions of \eqref{eq:Kerc1}-\eqref{eq:Kerc6} on $\mathcal D_0$ are obtained, the ones on the subset $\mathcal D$ are the required kernels $F,N$ and $\lambda$. To ensure the well-posedness of the kernel equations \eqref{eq:Kerc1}-\eqref{eq:Kerc6} on $\mathcal D_0$, we adding an additional artificial condition at $x=L$ for $N_{21}$ as done in \cite{Hu2016Control}. In the following, we will prove that there exists a unique solution of \eqref{eq:Kerc1}-\eqref{eq:Kerc6} on ${\mathcal D_0}$ by showing it is in the form of a class of well-posed equations in \cite{Meglio2017Stabilization}, which ensures there exist a unique solution $F,N\in L^{\infty}(\mathcal D)$, $\lambda\in L^{\infty}([0,l(t)])$.

Expanding \eqref{eq:Kerc1}-\eqref{eq:Kerc6} on $\mathcal D_0$, we know $F_{ij}(x,y)_{1\le i,j\le 2}$, $N_{ij}(x,y)_{1\le i,j\le 2}$ should satisfy following coupled hyperbolic PDEs:
\begin{align}
&\sqrt {{d_6}(x)} {F_{11x}}(x,y) - \sqrt {{d_6}(y)} {F_{11y}}(x,y)\notag\\
& = [\frac{{{d_{10}}(y)}}{2} - {s_1}(y)]{N_{11}}(x,y) + [\frac{{{d_{5}}(y)}}{2} - \frac{{{d_3}(y)}}{{2\sqrt {{d_6}(y)} }}]{N_{12}}(x,y)\notag\\
& + [\frac{{{d_{10}}(y)}}{2} - {s_1}(y) + {\sqrt {{d_6}(y)} ^\prime } - \frac{{{d_{10}}(x)}}{2} + {s_1}(x)]{F_{11}}(x,y)\notag\\
& + [\frac{{{d_{5}}(y)}}{2} - \frac{{{d_3}(y)}}{{2\sqrt {{d_6}(y)} }}]{F_{12}}(x,y),\label{eq:F11x}\\
&\sqrt {{d_6}(x)} {F_{12x}}(x,y) - \sqrt {{d_1}(y)} {F_{12y}}(x,y)\notag\\
& = [\frac{{{d_{9}}(y)}}{2} - \frac{{{d_7}(y)}}{{2\sqrt {{d_1}(y)} }}]{N_{11}}(x,y) + [\frac{{{d_{4}}(y)}}{2} - {s_2}(y)]{N_{12}}(x,y)\notag\\
& + [\frac{{{d_{4}}(y)}}{2} - {s_2}(y) + {\sqrt {{d_1}(y)} ^\prime } - \frac{{{d_{10}}(x)}}{2} + {s_1}(x)]{F_{12}}(x,y)\notag\\
& + [\frac{{{d_{9}}(y)}}{2} - \frac{{{d_7}(y)}}{{2\sqrt {{d_1}(y)} }}]{F_{11}}(x,y),\label{eq:F12x}\\
&\sqrt {{d_1}(x)} {F_{21x}}(x,y) - \sqrt {{d_6}(y)} {F_{21y}}(x,y)\notag\\
& = [\frac{{{d_{10}}(y)}}{2} - {s_1}(y)]{N_{21}}(x,y) + [\frac{{{d_{5}}(y)}}{2} - \frac{{{d_3}(y)}}{{2\sqrt {{d_6}(y)} }}]{N_{22}}(x,y)\notag\\
& + [\frac{{{d_{10}}(y)}}{2} - {s_1}(y) + {\sqrt {{d_6}(y)} ^\prime } - \frac{{{d_{4}}(x)}}{2} + {s_2}(x)]{F_{21}}(x,y)\notag\\
& + [\frac{{{d_{5}}(y)}}{2} - \frac{{{d_3}(y)}}{{2\sqrt {{d_6}(y)} }}]{F_{22}}(x,y),\label{eq:F21x}\\
&\sqrt {{d_1}(x)} {F_{22x}}(x,y) - \sqrt {{d_1}(y)} {F_{22y}}(x,y)\notag\\
& = [\frac{{{d_{9}}(y)}}{2} - \frac{{{d_7}(y)}}{{2\sqrt {{d_1}(y)} }}]{N_{21}}(x,y) + [\frac{{{d_{4}}(y)}}{2} - {s_2}(y)]{N_{22}}(x,y)\notag\\
& + [\frac{{{d_{4}}(y)}}{2} - {s_2}(y) + {\sqrt {{d_1}(y)} ^\prime } - \frac{{{d_{4}}(x)}}{2} + {s_2}(x)]{F_{22}}(x,y)\notag\\
& + [\frac{{{d_{9}}(y)}}{2} - \frac{{{d_7}(y)}}{{2\sqrt {{d_1}(y)} }}]{F_{21}}(x,y),\label{eq:F22x}\\
&\sqrt {{d_6}(x)} {N_{11x}}(x,y) + \sqrt {{d_6}(y)} {N_{11y}}(x,y)\notag\\
& = [{s_1}(y) + \frac{{{d_{10}}(y)}}{2}]{F_{11}}(x,y)+ [\frac{{{d_3}(y)}}{{2\sqrt {{d_6}(y)} }} + \frac{{{d_{5}}(y)}}{2}]{F_{12}}(x,y)\notag\\
& + [{s_1}(y) + \frac{{{d_{10}}(y)}}{2} - {\sqrt {{d_6}(y)} ^\prime } - {s_1}(x) - \frac{{{d_{10}}(x)}}{2}]{N_{11}}(x,y)\notag\\
&  + [\frac{{{d_3}(y)}}{{2\sqrt {{d_6}(y)} }} + \frac{{{d_{5}}(y)}}{2}]{N_{12}}(x,y),\label{eq:N11x}\\
&\sqrt {{d_6}(x)} {N_{12x}}(x,y) + \sqrt {{d_1}(y)} {N_{12y}}(x,y)\notag\\
 &= [\frac{{{d_7}(y)}}{{2\sqrt {{d_1}(y)} }} + \frac{{{d_{9}}(y)}}{2}]{F_{11}}(x,y)+ [{s_2}(y) + \frac{{{d_{4}}(y)}}{2}]{F_{12}}(x,y)\notag\\
& + [{s_2}(y) + \frac{{{d_{4}}(y)}}{2} - {\sqrt {{d_1}(y)} ^\prime } - {s_1}(x) - \frac{{{d_{10}}(x)}}{2}]{N_{12}}(x,y)\notag\\
&  + [\frac{{{d_7}(y)}}{{2\sqrt {{d_1}(y)} }} + \frac{{{d_{9}}(y)}}{2}]{N_{11}}(x,y),\label{eq:N12x}\\
&\sqrt {{d_1}(x)} {N_{21x}}(x,y) + \sqrt {{d_6}(y)} {N_{21y}}(x,y) \notag\\
&= [{s_1}(y) + \frac{{{d_{10}}(y)}}{2}]{F_{21}}(x,y)+ [\frac{{{d_3}(y)}}{{2\sqrt {{d_6}(y)} }} + \frac{{{d_{5}}(y)}}{2}]{F_{22}}(x,y)\notag\\
& + [{s_1}(y) + \frac{{{d_{10}}(y)}}{2} - {\sqrt {{d_6}(y)} ^\prime } - {s_2}(x) - \frac{{{d_{4}}(x)}}{2}]{N_{21}}(x,y)\notag\\
&  + [\frac{{{d_3}(y)}}{{2\sqrt {{d_6}(y)} }} + \frac{{{d_{5}}(y)}}{2}]{N_{22}}(x,y),\label{eq:N21x}\\
&\sqrt {{d_1}(x)} {N_{22x}}(x,y) + \sqrt {{d_1}(y)} {N_{22y}}(x,y) \notag\\
&= [\frac{{{d_7}(y)}}{{2\sqrt {{d_1}(y)} }} + \frac{{{d_{9}}(y)}}{2}]{F_{21}}(x,y)+ [{s_2}(y) + \frac{{{d_{4}}(y)}}{2}]{F_{22}}(x,y)\notag\\
& + [{s_2}(y) + \frac{{{d_{4}}(y)}}{2} - {\sqrt {{d_1}(y)} ^\prime } - {s_2}(x) - \frac{{{d_{4}}(x)}}{2}]{N_{22}}(x,y)\notag\\
&  + [\frac{{{d_7}(y)}}{{2\sqrt {{d_1}(x)} }} + \frac{{{d_{9}}(y)}}{2}]{N_{21}}(x,y)\label{eq:N22x}
\end{align}
along with the following set of boundary conditions:
 \begin{align}
&{F_{11}}(x,x) = \frac{{ - {d_{10}}(x)+2{s_1}(x)}}{{4\sqrt {{d_6}(x)} }},\label{eq:F11}\\
&{F_{12}}(x,x) = \frac{{ - {d_{9}}(x)}\sqrt {{d_1}(x)}+{d_7}(x)}{{2(\sqrt {{d_6}(x)}  + \sqrt {{d_1}(x)} )\sqrt {{d_1}(x)} }},\\
&{F_{21}}(x,x) = \frac{{ - {d_{5}}(x)}\sqrt {{d_6}(x)}+{d_3}(x)}{{2(\sqrt {{d_6}(x)}  + \sqrt {{d_1}(x)} )\sqrt {{d_6}(x)} }},\\
&{F_{22}}(x,x) = \frac{{ - {d_{4}}(x)+2s_2(x)}}{{4\sqrt {{d_1}(x)} }} ,\label{eq:F22}\\
&{N_{12}}(x,x){\rm{ = }}\frac{{\frac{{ - {d_7}(x)}}{{2\sqrt {{d_1}(x)} }} - \frac{{{d_{9}}(x)}}{2}}}{{\sqrt {{d_6}(x)}  - \sqrt {{d_1}(x)} }},\\
&{N_{21}}(x,x){\rm{ = }}\frac{{\frac{{ - {d_3}(x)}}{{2\sqrt {{d_6}(x)} }} - \frac{{{d_{5}}(x)}}{2}}}{{\sqrt {{d_1}(x)}  - \sqrt {{d_6}(x)} }},\label{eq:N21}\\
&{N_{11}}(x,0) =  - {F_{11}}(x,0) + \frac{{{d_{14}}{\lambda _{12}}(x)}}{{{d_6}(0)}} + \frac{{{d_{16}}{\lambda _{14}}(x)}}{{{d_6}(0)}},\label{eq:N110}\\
&{N_{12}}(x,0) =  - {F_{12}}(x,0) + \frac{{{d_{12}}{\lambda _{12}}(x)}}{{{d_1}(0)}} + \frac{{{d_{18}}{\lambda _{14}}(x)}}{{{d_1}(0)}},\\
&{N_{21}}(x,0) =  - {F_{21}}(x,0)\notag\\
&\quad\quad\quad\quad + \frac{{g_0}(x)\sqrt {{d_6}(0)}+{{d_{14}}{\lambda _{22}}(x)}+{{{d_{16}}{\lambda _{24}}(x)}}}{{{d_6}(0)}},\\
&{N_{22}}(x,0) =  - {F_{22}}(x,0) + \frac{{{d_{12}}{\lambda _{22}}(x)}}{{{d_1}(0)}} + \frac{{{d_{18}}{\lambda _{24}}(x)}}{{{d_1}(0)}},\label{eq:N220}\\
&N_{21}(L,y)=0,\label{eq:N21L}
\end{align}
where \eqref{eq:N21L} is the artificial boundary condition. Besides, ${\lambda _{ij}}(x)_{1\le i\le 2,1\le j\le 4}$ should satisfy the following ODEs:
\begin{align}
&\sqrt {{d_6}(x)} {\lambda _{11}}^\prime (x) + [{s_1}(x) + \frac{{{d_{10}}(x)}}{2}]{\lambda _{11}}(x) = 0,\label{eq:lambda11}\\
&\sqrt {{d_6}(x)} {\lambda _{12}}^\prime (x) + [{s_1}(x) + \frac{{{d_{10}}(x)}}{2} - {d_{11}} + \frac{{{d_{12}}}}{{\sqrt {{d_1}(0)} }}]{\lambda _{12}}(x)\notag\\
& - {\lambda _{11}}(x) - ({d_{17}} - \frac{{{d_{18}}}}{{\sqrt {{d_1}(0)} }}){\lambda _{14}}(x)\notag\\
& - 2\sqrt {{d_1}(0)} {F_{12}}(x,0) = 0,\\
&\sqrt {{d_6}(x)} {\lambda _{13}}^\prime (x) + [{s_1}(x) + \frac{{{d_{10}}(x)}}{2}]{\lambda _{13}}(x) = 0,\\
&\sqrt {{d_6}(x)} {\lambda _{14}}^\prime (x) + [{s_1}(x) + \frac{{{d_{10}}(x)}}{2} - {d_{15}} + \frac{{{d_{16}}}}{{\sqrt {{d_6}(0)} }}]{\lambda _{14}}(x)\notag\\
& - {\lambda _{13}}(x) - ({d_{13}} - \frac{{{d_{14}}}}{{\sqrt {{d_6}(0)} }}){\lambda _{12}}(x)\notag\\
& - 2\sqrt {{d_6}(0)} {F_{11}}(x,0) = 0,\\
&\sqrt {{d_1}(x)} {\lambda _{21}}^\prime (x) + [{s_2}(x) + \frac{{{d_{4}}(x)}}{2}]{\lambda _{21}}(x) + {g_0}(x){\lambda _{11}}(0){\rm{ = }}0,\\
&\sqrt {{d_1}(x)} {\lambda _{22}}^\prime (x) + [{s_2}(x) + \frac{{{d_{4}}(x)}}{2} - {d_{11}} + \frac{{{d_{12}}}}{{\sqrt {{d_1}(0)} }}]{\lambda _{22}}(x)\notag\\
& - {\lambda _{21}}(x) - ({d_{17}} - \frac{{{d_{18}}}}{{\sqrt {{d_1}(0)} }}){\lambda _{24}}(x)\notag\\
& - 2\sqrt {{d_1}(0)} {F_{22}}(x,0) + {g_0}(x){\lambda _{12}}(0) = 0,\\
&\sqrt {{d_1}(x)} {\lambda _{23}}^\prime (x) + [{s_2}(x) + \frac{{{d_{4}}(x)}}{2}]{\lambda _{23}}(x) + {g_0}(x){\lambda _{13}}(0) = 0,\\
&\sqrt {{d_1}(x)} {\lambda _{24}}^\prime (x) + [{s_2}(x) + \frac{{{d_{4}}(x)}}{2} - {d_{15}} + \frac{{{d_{16}}}}{{\sqrt {{d_6}(0)} }}]{\lambda _{24}}(x)\notag\\
& - ({d_{13}} - \frac{{{d_{14}}}}{{\sqrt {{d_6}(0)} }}){\lambda _{22}}(x) - {\lambda _{23}}(x)\notag\\
 &- 2\sqrt {{d_6}(0)} {F_{21}}(x,0) + {g_0}(x){\lambda _{14}}(0) = 0,\label{eq:lambda24}
 \end{align}
with initial conditions:
 \begin{align}
&\left[ {\begin{array}{*{20}{c}}
{{\lambda _{11}}(0)}&{{\lambda _{12}}(0)}&{{\lambda _{13}}(0)}&{{\lambda _{14}}(0)}\\
{{\lambda _{21}}(0)}&{{\lambda _{22}}(0)}&{{\lambda _{23}}(0)}&{{\lambda _{24}}(0)}
\end{array}} \right]\notag\\
&= \left[ {\begin{array}{*{20}{c}}
{{\kappa _{11}}}&{{\kappa _{12}}}&{{\kappa _{13}}}&{{\kappa _{14}}}\\
{{\kappa _{21}}}&{{\kappa _{22}}}&{{\kappa _{23}}}&{{\kappa _{24}}}
\end{array}} \right].\label{eq:lambda0}
\end{align}
\eqref{eq:F11x}-\eqref{eq:lambda0} has the same structure as the kernel equations (17)-(24) in \cite{Meglio2017Stabilization} with setting $m=n=2$. More precisely,  \eqref{eq:F11x}-\eqref{eq:F22x} corresponding to (17); \eqref{eq:N11x}-\eqref{eq:N22x} corresponding to (18); \eqref{eq:F11}-\eqref{eq:F22} corresponding to (19); \eqref{eq:N21} corresponding to (20); \eqref{eq:N110}-\eqref{eq:N220} corresponding to (21); the additional artificial boundary condition \eqref{eq:N21L} corresponding to (24);  the initial value problem \eqref{eq:lambda11}-\eqref{eq:lambda0} corresponding to (22)-(23). Please note that even thought (17)-(24) in \cite{Meglio2017Stabilization} are with constant coefficients, the well-posedness result still holds in the  case of spatially-varying coefficients as \eqref{eq:F11x}-\eqref{eq:lambda0} in this paper, because \eqref{eq:F11x}-\eqref{eq:lambda0} are in the form of a general class
of hyperbolic PDEs (30)-(31) of which the well-posedness is proved in \cite{Meglio2017Stabilization}. Therefore, \eqref{eq:F11x}-\eqref{eq:lambda0}, i.e., \eqref{eq:Kerc1}-\eqref{eq:Kerc6} are well-posed on the domain $\mathcal D_0$, which straightforwardly yields to a unique solution $F,N\in L^{\infty}(\mathcal D)$, $\lambda\in L^{\infty}([0,l(t)])$ of \eqref{eq:Kerc1}-\eqref{eq:Kerc6} because of ${\mathcal D}\subseteq\mathcal D_0$, $l(t)\le L$. Kernel equations \eqref{eq:Kerc1a}-\eqref{eq:Kerc6a} of $J(x,y),K(x,y),\gamma(x)$ have a same structure with \eqref{eq:Kerc1}-\eqref{eq:Kerc6}. Through a similar process as above with adding an additional artificial boundary condition $K_{21}(L,y)=0$, one obtain that there is a unique solution $J,K\in L^{\infty}(\mathcal D)$, $\gamma\in L^{\infty}([0,l(t)])$ of \eqref{eq:Kerc1a}-\eqref{eq:Kerc6a}. The proof of Lemma \ref{lm:wp} is completed.

\end{document}